\documentclass[11pt]{amsart}
\usepackage{amsmath,amssymb,amsthm,mathtools}
\usepackage[margin=1.15in]{geometry}
\usepackage{enumitem}
\usepackage{booktabs}
\usepackage{listings}
\usepackage{xcolor}
\usepackage[expansion=false]{microtype}
\usepackage[colorlinks=true,linkcolor=blue!50!black,citecolor=green!40!black,urlcolor=blue!50!black]{hyperref}

\newcommand{\C}{\mathbb{C}}
\newcommand{\Z}{\mathbb{Z}}
\newcommand{\Q}{\mathbb{Q}}
\newcommand{\F}{\mathbb{F}}
\newcommand{\PP}{\mathbb{P}}
\newcommand{\Der}{\operatorname{Der}}
\newcommand{\GL}{\operatorname{GL}}
\newcommand{\SL}{\operatorname{SL}}
\newcommand{\SU}{\operatorname{SU}}
\newcommand{\U}{\operatorname{U}}
\newcommand{\Gr}{\operatorname{Gr}}
\newcommand{\Conf}{\operatorname{Conf}}
\newcommand{\Poin}{\operatorname{Poin}}
\newcommand{\rk}{\operatorname{rk}}
\newcommand{\pd}{\operatorname{pd}}
\newcommand{\coker}{\operatorname{coker}}
\newcommand{\sgn}{\operatorname{sgn}}
\newcommand{\Mo}{\mathcal{M}_{0}}
\newcommand{\cA}{\mathcal{A}}
\newcommand{\cL}{\mathcal{L}}
\newcommand{\dd}[1]{\partial_{#1}}

\theoremstyle{plain}
\newtheorem{theorem}{Theorem}[section]
\newtheorem{proposition}[theorem]{Proposition}
\newtheorem{lemma}[theorem]{Lemma}
\newtheorem{corollary}[theorem]{Corollary}
\newtheorem{conjecture}[theorem]{Conjecture}
\newtheorem{mainthm}{Theorem}

\theoremstyle{definition}
\newtheorem{definition}[theorem]{Definition}
\newtheorem{example}[theorem]{Example}

\theoremstyle{remark}
\newtheorem{remark}[theorem]{Remark}
\numberwithin{equation}{section}
\begin{document}

\title[Geometry of some determinantal arrangements]{Geometry of some determinantal arrangements}

\author{Marek Janasz}
\address{Department of Mathematics, University of the National Education Commission\\
Podchorażych 2, 30-084 Kraków, Poland}
\email{marek.janasz@uken.krakow.pl}

\date{Draft, \today}

\subjclass[2020]{32S22, 14M12, 13N15, 52C35, 14H10}
\keywords{determinantal arrangement, free divisor, logarithmic vector fields, Gale duality, Grassmannian, moduli of points, Poincar\'e polynomial}

\begin{abstract}
Let $D_{m,n}\subset\C^{m\times n}$ be the union of the zero loci of the
maximal minors of a generic $m\times n$ matrix, and let $U_{m,n}$ be its
complement. We show that $U_{m,n}\cong\GL_m(\C)\times\Gr(m,n)^\circ$, where
$\Gr(m,n)^\circ$ is the open stratum of the Grassmannian on which all
Pl\"ucker coordinates are nonzero. For $m\in\{2,n-2\}$ this gives closed
formulas for the topology of $U_{m,n}$ and recovers results of Yim. Our main
result, which is computer-assisted, is that $D_{3,5}$ is \emph{not} a free
divisor. This answers a question of Yim. We determine $\Der(-\log D_{3,5})$
explicitly. It has projective dimension one, and it fails to be free exactly
on the matrices of rank at most two. There the failure comes from a non-free
transversal arrangement: the ten planes spanned by pairs of five general
vectors in $\C^3$. Computations in further cases support the conjecture that
$D_{m,n}$ is free only for $m\le 2$ or $n=m+1$.
\end{abstract}

\maketitle
\setcounter{tocdepth}{1}

\section{Introduction}

Let $X$ be a complex manifold and $D\subset X$ a reduced divisor. The sheaf $\Der_X(-\log D)$ of logarithmic vector fields consists of the vector fields tangent to $D$. We say that $D$ is \emph{free} if the sheaf $\Der_X(-\log D)$ is locally free \cite{Saito}. Free divisors appear as discriminants of versal deformations, as many classical hyperplane arrangements (Coxeter arrangements, supersolvable arrangements, graphic arrangements of chordal graphs \cite{Stanley,OT}), and as linear free divisors coming from prehomogeneous representations \cite{BM,GMNS}. For hyperplane arrangements freeness has strong topological consequences. For example, by Terao's factorization theorem \cite{Terao81} we know that the characteristic polynomial of a free arrangement, equivalently the Poincar\'e polynomial of its complement, factors into linear factors given by the degrees of a basis of the $\mathcal{O}_{X}$-module $\Der(-\log D)$.

This paper is concerned with the so-called \emph{determinantal arrangements of maximal minors}. Let $M=(x_{rk})$ be the generic $m\times n$ matrix with $m<n$ having columns $v_1,\dots,v_n\in\C^m$, and for an $m$-element subset $I\subset[n]$ let $\Delta_I$ be the maximal minor on the columns $I$. Put
\[f_{m,n}=\prod_{I\in\binom{[n]}{m}}\Delta_I,\qquad D_{m,n}=V(f_{m,n})\subset\C^{m\times n},\qquad U_{m,n}=\C^{m\times n}\setminus D_{m,n}.\]
Thus $U_{m,n}$ is the space of $n$-tuples of vectors in $\C^m$ in general position (every $m$ of them are a basis). The following is known about the geometry of $D_{m,n}$: 
\begin{itemize}[leftmargin=2em]
\item $D_{1,n}$ is the normal crossing divisor $x_1\cdots x_n=0$ and hence this is a classical Boolean arrangement.
\item $D_{m,m+1}$ is a \emph{linear} free divisor by \cite{BM}.
\item $D_{2,n}$ is free for all $n\ge3$ by \cite[Thm.~3.3]{Yim}. A basis consists of $n+3$ linear vector fields and $n-3$ vector fields of degree $n+1$. Yim also computed $\Poin(U_{2,n},t)$ \cite[Thm.~4.1]{Yim}, namely he proved that $\pi_i(U_{2,n})\cong\pi_i(S^3)$ for $i\ge2$ \cite[Thm.~4.4]{Yim}, and studied subarrangements indexed by graphs.
\end{itemize}
Yim asked \cite[Question~3.4]{Yim} whether $D_{m,n}$ is free for all $n>m>2$, and remarked that already the case $3\times5$ was open. He also observed \cite[Example~4.6]{Yim} that his fibration method for computing the topology of the complement breaks down for $3\times7$ matrices.

The $3\times5$ setting was also considered in \cite{JanaszWisniewska}, where determinantal arrangements built from selected products of the $3\times3$ minors were studied and several natural examples were shown to be free. The present paper concerns instead the full product of all ten maximal minors and shows that the resulting divisor is not free.

Our first observation is elementary but organizes everything our work in the paper. Since on $U_{m,n}$ the first $m$ columns form a basis, $U_{m,n}$ is a \emph{trivial} principal $\GL_m$-bundle over the open Grassmann stratum, and the base is cut out, in the affine coordinates of a big cell, by the nonvanishing of \emph{all} minors of an $m\times(n-m)$ matrix. Because a matrix and its transpose have the same minors, this makes Gale duality (association) visible at the level of the arrangement, both globally and locally.

Here is our main result of the paper. We will denote by $\mathfrak{P}_{i}$ the pure braid group on $i$ strands.
\begin{mainthm}
\label{thmA}
For all $1\le m<n$ there is an isomorphism of varieties $U_{m,n}\cong\GL_m(\C)\times Y_{m,n}$ with $Y_{m,n}\cong\Gr(m,n)^\circ\cong\Gr(n-m,n)^\circ\cong Y_{n-m,n}$. If $m\in\{2,n-2\}$, then
\[U_{m,n}\;\cong\;\GL_m(\C)\times(\C^*)^{n-1}\times\Mo{}_{,n}.\]
In particular,
\begin{gather*}
U_{3,5}\cong\GL_3(\C)\times(\C^*)^4\times\Mo{}_{,5}\simeq\SU(3)\times T^5\times\Mo{}_{,5},\\
\Poin(U_{3,5},t)=(1+t)^5(1+t^3)(1+t^5)(1+2t)(1+3t)=(1+t^5) \cdot \Poin(U_{2,5},t),\\
\pi_i(U_{3,5})\cong\pi_i(\SU(3)) \text{ for } i\ge2 \text{ and } \pi_1(U_{3,5})\cong\Z^4\times \mathfrak{P}_4.
\end{gather*}
\end{mainthm}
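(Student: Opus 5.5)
The plan is to build the isomorphism $U_{m,n}\cong\GL_m(\C)\times Y_{m,n}$ by normalizing with respect to the first $m$ columns. Then, for $m=2$, a torus action reduces $Y_{2,n}$ to a configuration space, and the topological statements follow from standard facts about $\GL_3$, tori and $\Mo{}_{,5}$.

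\emph{Step 1: the global splitting.} Write $M=[A\mid C]$ with $A$ the first $m$ columns. On $U_{m,n}$ we have $\Delta_{[m]}=\det A\neq0$. Define
\[
\Phi(M)=(A,\;B:=A^{-1}C)\in\GL_m(\C)\times\C^{m\times(n-m)},
\]
with inverse $(A,B)\mapsto[A\mid AB]$. Since $\Delta_I(M)=\det A\cdot\Delta_I([I_m\mid B])$, $M\in U_{m,n}$ if and only if all maximal minors of $[I_m\mid B]$ are nonzero. By Laplace expansion along the identity columns, the maximal minor of $[I_m\mid B]$ on a column set $I$ equals, up to sign, the minor of $B$ with rows $[m]\setminus I$ and columns $I\setminus[m]$. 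As $I$ ranges over $\binom{[n]}{m}$, this runs bijectively over all square minors of $B$ of every size, with the empty minor equal to $1$. So
\[
Y_{m,n}=\{B:\ \text{all minors of }B\neq0\}.
\]
This is exactly the big Plücker cell chart $\{p_{[m]}\neq0\}$ of $\Gr(m,n)$ intersected with $\Gr(m,n)^\circ$, and on that chart the condition $p_{[m]}\neq0$ is automatic. Transposition $B\mapsto B^{T}$ preserves the set of minors, which gives $Y_{m,n}\cong Y_{n-m,n}$; this is also Gale duality $\Gr(m,n)^\circ\cong\Gr(n-m,n)^\circ$. All maps are polynomial with polynomial inverses on these open sets, so they are isomorphisms of varieties.

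\emph{Step 2: the case $m=2$.} By Step 1 the case $m=n-2$ reduces to $m=2$. Let $B$ be $2\times(n-2)$ with all entries nonzero and all $2\times2$ minors nonzero. Dividing the $j$th column by its first entry $a_j\in\C^*$ identifies $Y_{2,n}$ with $(\C^*)^{n-2}\times\{(b_1,\dots,b_{n-2})\}$. Here the $b_j\in\C^*$ are pairwise distinct, since the $2\times2$ minors become $a_ia_j(b_j-b_i)$. Then dividing by $b_1\in\C^*$ identifies the configurations with $\C^*\times\{(c_2,\dots,c_{n-2})\}$, where the $c_j$ are distinct points of $\C\setminus\{0,1\}$. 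The latter space is the standard model of $\Mo{}_{,n}$, obtained by fixing three of the $n$ points at $0,1,\infty$. This gives $U_{2,n}\cong\GL_2(\C)\times(\C^*)^{n-1}\times\Mo{}_{,n}$. Transposing $B$ gives the same statement for $m=n-2$, and in particular for $U_{3,5}$.

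\emph{Step 3: topology of $U_{3,5}$.} We use $\GL_3(\C)\simeq\U(3)\cong\SU(3)\times S^1$ as spaces, together with the classical formulas
\[
\Poin(\SU(3),t)=(1+t^3)(1+t^5),\qquad \Poin(\Mo{}_{,5},t)=(1+2t)(1+3t).
\]
The second comes from the fiber-type (supersolvable) arrangement structure of $\Mo{}_{,5}$, which has exponents $2,3$. Künneth then gives the stated product. The same computation for $U_{2,5}$, using $\GL_2\simeq\SU(2)\times S^1$ with $\SU(2)\cong S^3$, differs exactly by the factor $(1+t^5)$.

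For homotopy groups, tori and $\Mo{}_{,5}$ are $K(\pi,1)$'s; for $\Mo{}_{,5}$ this follows from the Fadell--Neuwirth fibrations. Hence $\pi_i(U_{3,5})\cong\pi_i(\SU(3))$ for $i\ge2$. Since $\SU(3)$ is simply connected, $\pi_1(U_{3,5})\cong\Z^5\times\pi_1(\Mo{}_{,5})$.

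The step I expect to need the most care is the identification $\Z^5\times\pi_1(\Mo{}_{,5})\cong\Z^4\times\mathfrak P_4$. For this I would use $\Conf_4(\C)\cong\mathrm{Aff}\times\Mo{}_{,5}$, where $\mathrm{Aff}=\C\rtimes\C^*\simeq S^1$. This is obtained by normalizing the first two of the four points to $0$ and $1$, and it gives $\mathfrak P_4\cong\Z\times\pi_1(\Mo{}_{,5})$. Here the $\Z$ factor is generated by the full twist, which is central. Substituting this into $\Z^5\times\pi_1(\Mo{}_{,5})$ yields $\Z^4\times\mathfrak P_4$.

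Two points need to be checked carefully:
\begin{itemize}[leftmargin=2em]
\item that the normalization in Step 2 is a genuine algebraic isomorphism, i.e.\ the torus coordinates $a_j$ and $b_1$ are regular functions on $Y_{2,n}$;
\item the sign and index bookkeeping in the Laplace-expansion bijection of Step 1.
\end{itemize}
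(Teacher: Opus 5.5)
Your proposal is correct and follows essentially the same route as the paper. It uses the slice $M\mapsto(M_{[m]},M_{[m]}^{-1}M_{[n]\setminus[m]})$ with the Laplace-expansion identification of maximal minors of $[I_m|W]$ with all minors of $W$ (Lemma~\ref{lem:minorsIW}, Proposition~\ref{prop:slice}), transposition for Gale duality, the normalization by $a_j$ and $b_1$ for $Y_{2,n}$ (Proposition~\ref{prop:Y2n}), and K\"unneth together with $\Conf_{n-1}(\C)\cong\C\times\C^*\times\Mo{}_{,n}$ for the topology. The two points you flag (regularity of the normalizing coordinates, which are ratios with nonvanishing minors as denominators, and the index shift from $I\setminus[m]$ to $I_2$) are routine and cause no difficulty.
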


For $m=2$, our Theorem~\ref{thmA} recovers \cite[Theorems~4.1 and~4.4]{Yim} for the complete graph and adds $\pi_1(U_{2,n})\cong\Z^{n-1}\times \mathfrak{P}_{n-1}$ (Remark~\ref{rem:Yimtop}). Our second result answers Yim's question for $3\times5$ in the negative.

\begin{mainthm}
\label{thmB}
The divisor $D_{3,5}\subset\C^{15}$ is not free. More precisely, let $R=\C[x_{rk}]$, then
\begin{enumerate}[label=\textup{(\arabic*)},leftmargin=2em]
\item $\Der(-\log D_{3,5})$ is minimally generated by the $13$ linear vector fields coming from the action of $\GL_3\times(\C^*)^5$ and by five vector fields $\Phi_1,\dots,\Phi_5$ of degree $10$, given explicitly in \eqref{eq:psi} and Definition~\ref{def:Phi};
\item its minimal graded free resolution is
\[0\longrightarrow R(-11)^3\longrightarrow R(-1)^{13}\oplus R(-10)^5\longrightarrow\Der(-\log D_{3,5})\longrightarrow 0,\]
and the three syzygies are of the form $\sum_{k=1}^5x_{rk}\Phi_k\in R\cdot\cL$, $r=1,2,3$, where $\cL$ is the space of linear logarithmic fields. Hence $\Der(-\log D_{3,5})/R\cL\cong\coker\big(M^T\colon R(-11)^3\to R(-10)^5\big)$;
\item the stalk of $\Der(-\log D_{3,5})$ at a point $p$ needs exactly $18-\rk M(p)$ generators. In particular the non-free locus of $D_{3,5}$ is the determinantal variety $\{\rk M\le 2\}$, of codimension $3$.
\end{enumerate}
\end{mainthm}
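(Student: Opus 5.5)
The plan is to combine the structure from Theorem~\ref{thmA} with a Gröbner/syzygy computation, and then localize to read off the non-free locus. First I identify the linear part $\cL$. The group $\GL_3\times(\C^*)^5$ acts on $\C^{3\times5}$ by $M\mapsto gMt$. Every minor $\Delta_I$ is semi-invariant under this action, so the $9+5=14$ infinitesimal generators are logarithmic. Since the scalars $\lambda I_3$ and $\lambda^{-1}(1,\dots,1)$ act identically, the span $\cL$ has dimension exactly $13$. By Theorem~\ref{thmA} the orbits of this group on $U_{3,5}$ have dimension $13$ and the quotient is $\Mo{}_{,5}$, which has dimension $2$. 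This also shows that no further linear logarithmic fields exist: I would check this by solving the linear system $\theta(f)\in(f)$ for $\deg\theta=1$ directly, which is a finite linear algebra computation. The Euler field is contained in $\cL$.

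Second, I compute the whole module $\Der(-\log D_{3,5})$. I would use the standard presentation as the kernel of $R^{15}\oplus R\to R/(f)$, namely $(\theta,g)\mapsto\theta(f)-gf$. Equivalently, and more cheaply, I would intersect the modules $\Der(-\log\Delta_I)$ over the ten $I$, since $D$ is reduced with these irreducible components. Each $\Der(-\log\Delta_I)$ is easy to describe: it consists of the fields tangent to a determinantal hypersurface, generated by the $\GL_3$-action on the columns in $I$, by derivations in the remaining columns, and by the Euler-type fields. The intersection is then a syzygy computation, which I would do in Macaulay2. The output should be the $13$ linear fields together with five fields of degree $10$. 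To make these explicit I would use the symmetry: $S_5$ permutes the columns, and I expect a single field $\Phi_1$ (the formula \eqref{eq:psi}) whose $S_5$-images produce $\Phi_1,\dots,\Phi_5$. Next I compute the syzygy module of the $18$ generators. It should be free of rank $3$ in degree $11$. The identities $\sum_k x_{rk}\Phi_k\in R\cL$ can be checked by hand: they express the equivariance under the row action $\GL_3$. This gives (2). Minimality holds because no syzygy has a unit coefficient, since every entry is linear or lies in $R\cL$ times something of positive degree.

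Third, for (3): the resolution has length one, so the minimal number of generators of the stalk at $p$ equals $18-\rk(\text{syzygy matrix at }p)$. The syzygy matrix has rows equal to the entries $x_{rk}$ together with the $\cL$-coefficient part. Evaluated at $p$, the entries coming from the $\cL$-coefficients are of positive degree in a minimal resolution. I therefore need to verify that the rank at $p$ equals $\rk M(p)$. The $5\times3$ block $M^T$ lives in the $\Phi$-coordinates and the other block vanishes, so the rank is exactly $\rk M(p)$. Consequently the stalk is free, with $15$ generators, exactly when $\rk M(p)=3$. The non-free locus is then $\{\rk M\le2\}$, which has codimension $(3-2)(5-2)=3$, and this already gives non-freeness.

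The main obstacle is the second step, certifying that the computed generators generate all of $\Der(-\log D)$ and not merely a submodule. The syzygy computation in $15$ variables with $f$ of degree $30$ is heavy. I would reduce the load by first computing in the big-cell slice permitted by Theorem~\ref{thmA}, or by working over $\F_p$ and then lifting. As a cross-check, the submodule should be reflexive, hence of depth at least $2$, and it should agree with $\Der(-\log D)$ away from codimension $2$, where the arrangement is locally a product of normal crossings with simple determinantal singularities. Together these force equality. As a further consistency check, the local failure at rank-$2$ points should be explained by the transversal arrangement of the $10$ planes spanned by pairs of $5$ general vectors in $\C^3$.
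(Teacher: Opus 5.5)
Your first two steps follow the paper's route. The paper computes $\Der(-\log D_{3,5})$ in Macaulay2 as the kernel of $R^{15}\oplus R^{10}\to R^{10}$ (the intersection over the ten minors) and reads off (1) and (2). Global non-freeness already follows from (2), since $\mu(\Der_0)=18>15$.

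The gap is in your third step. For the presentation matrix $P=\left(\begin{smallmatrix}-A\\ M^T\end{smallmatrix}\right)$ you claim that the block $A(p)$ vanishes because its entries have positive degree. Positive degree only gives $A(0)=0$. The $a_{rj}$ are forms of degree $10$, and in general they do not vanish at $p\neq0$. For instance, if $k\mapsto i(k)$ is a permutation, the $\Phi_k$ are supported in pairwise different columns. Then $\sum_k x_{rk}(p)\Phi_k(p)\neq0$ for every $p\in U_{3,5}$, and \eqref{eq:syz} forces $A(p)\neq0$.

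What your argument actually gives is $\rk P(p)\ge\rk M(p)$, because $M(p)^T$ is a submatrix of $P(p)$. Hence $\mu(\Der_p)\le 18-\rk M(p)$, which yields freeness at rank-three points. The reverse inequality at points of rank $\le 2$ is the real content of (3): it says $D_{3,5}$ is non-free along $\{\rk M=2\}$ and needs $17$ generators at rank one. It is not a formal consequence of the shape of the resolution.

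The paper proves it separately, rank by rank:
\begin{itemize}
\item at rank two, by the nine-variable chart computation of Proposition~\ref{prop:certificate}, transported to $\C^{3\times5}$ via Proposition~\ref{prop:chart} and Lemma~\ref{lem:standard}(d) (Corollary~\ref{cor:nonfreelocus});
\item at rank one, by substituting $M=uw^T$ into the presentation matrix and checking that it has rank $1$ over $\C(u,w)$;
\item at rank zero, by your positive-degree argument, which is valid only there.
\end{itemize}

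You could also close the gap within your own framework using the $\Z^3\times\Z^5$-grading. By Lemma~\ref{lem:psi}, $\sum_k x_{rk}\Phi_k$ is multihomogeneous of degree $((3,3,3)+e_r,(2,2,2,2,2))$. Moreover, $\cL$ has an $R$-independent basis of multihomogeneous fields of degrees $0$ and $(e_s-e_t,0)$. So each $a_{rj}$ is multihomogeneous of degree at least $2$ in the entries of every row, and $A$ vanishes at every matrix with a zero row. Since $\GL_3$ preserves $D_{3,5}$, you have $\mu(\Der_p)=\mu(\Der_{gp})$. Any $p$ of rank $\le 2$ can be moved to a matrix $gp$ with a zero row, where $\rk P(gp)=\rk M(gp)=\rk M(p)$.

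Separately, your remark that $\sum_k x_{rk}\Phi_k\in R\cL$ can be checked by hand as an expression of row equivariance is not substantiated. In the paper these relations come out of the computation.
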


The proof of the non-freeness itself only needs a Gr\"obner basis computation in $9$ variables (Proposition~\ref{prop:certificate}), combined with a slice at rank-two matrices (Proposition~\ref{prop:chart}). The remaining parts of Theorem~\ref{thmB} use a computation in $15$ variables. The computations for $3\times5$ case were preformed with \emph{Macaulay2} \cite{M2}, and the complete script, including all other sizes, runs in about some minutes -- the code is described in Appendix~\ref{app:code}.

The geometric picture behind Theorem~\ref{thmB} is described by the next statement.

\begin{mainthm}\label{thmC}
\begin{enumerate}[label=\textup{(\arabic*)},leftmargin=2em]
\item At a point of rank $m$, the germ of $D_{m,n}$ is isomorphic, up to a smooth factor, to the germ of $D_{n-m,n}$ at a point of rank $n-m$. Consequently $D_{n-2,n}$ is free at all points of rank $n-2$; in particular $D_{3,5}$ is free at all matrices of rank $3$.
\item Near a rank-two matrix whose columns are pairwise independent, $D_{3,5}$ is (up to a smooth factor) a two-parameter family of central arrangements $\cA(P_1,\dots,P_5)\subset\C^3$ of the ten planes $\operatorname{span}(P_i,P_j)$, where $P_1,\dots,P_5\in\C^3$ are in general position and are the Gale transform of the five columns; the minor $\Delta_I$ corresponds to the plane spanned by $P_j,P_k$ with $\{j,k\}=[5]\setminus I$.
\item Each arrangement $\cA(P_1,\dots,P_5)$ has characteristic polynomial $(t-1)(t^2-9t+21)$ and is therefore not free.
\end{enumerate}
\end{mainthm}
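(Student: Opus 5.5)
The plan is to treat the three parts separately. Parts (1) and (2) come from explicit local normal forms built with the $\GL_m$-action, in the spirit of Theorem~\ref{thmA}. Part (3) is a direct count in the intersection lattice followed by Terao's factorization theorem.

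\emph{Part (1).} Let $p$ have rank $m$ and choose an $m$-subset $I$ of columns that is a basis at $p$; after permuting columns take $I=\{1,\dots,m\}$. On a neighbourhood of $p$ every matrix can be written uniquely as $M=A\,[\,\mathrm{Id}_m\mid B\,]$ with $A\in\GL_m(\C)$ and $B\in\C^{m\times(n-m)}$. Hence the germ is analytically $(\GL_m,A(p))\times(\C^{m\times(n-m)},B(p))$. For any $m$-subset $J$ we have $\Delta_J(M)=\det A\cdot\Delta_J([\mathrm{Id}\mid B])$. By Laplace expansion along the identity columns, $\Delta_J([\mathrm{Id}\mid B])$ equals, up to sign, the minor of $B$ with rows $I\setminus J$ and columns $J\setminus I$, together with the constant $1$ when $J=I$. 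Since $\det A$ is a unit near $p$, the germ of $D_{m,n}$ is the smooth factor $\GL_m$ times the germ at $B(p)$ of the hypersurface $\{\prod(\text{all nonempty minors of }B)=0\}$. Transposition $B\mapsto B^T$ preserves the set of minors. The same construction applied to $D_{n-m,n}$ at a rank-$(n-m)$ point with normal form $[\,\mathrm{Id}_{n-m}\mid B^T\,]$ therefore gives the same germ, which proves the first claim. Yim proved that $D_{2,n}$ is free, and a product with a smooth factor is free if and only if the divisor is (Saito's criterion). So $D_{2,n}$ is free at each of its rank-two points, and transporting this along the isomorphism gives freeness of $D_{n-2,n}$ at every rank-$(n-2)$ point, in particular of $D_{3,5}$ at rank $3$.

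\emph{Part (2).} Let $p$ be a rank-two matrix. Using the $\GL_3$-action, which contributes a smooth factor, normalize nearby matrices to the form $\begin{pmatrix}Q\\ \ell\end{pmatrix}$. Here $Q$ is a $2\times5$ matrix of rank $2$ near the two nonzero rows of $p$, and $\ell\in\C^5$ is small. The residual freedom in the third row lets us take $\ell$ modulo the row space of $Q$. Now choose a $5\times3$ matrix whose columns span $\ker Q$ and depend analytically on $Q$, and let $P_1,\dots,P_5\in\C^3$ be its rows; this is the Gale transform of the columns of $Q$. Then $\ell$ modulo the row space of $Q$ is recorded by a vector $y\in\C^3$ with $\ell_i=\langle P_i,y\rangle$. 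The key identity is the Gale-duality formula for maximal minors: for $I=[5]\setminus\{j,k\}$,
\[\Delta_I\begin{pmatrix}Q\\ \ell\end{pmatrix}=\pm\, c(Q)\,\det(P_j,P_k,y),\]
where $c(Q)$ is a unit independent of $I$. I would check this by expanding along the row $\ell$ and comparing with the standard identity $\Delta_{I'}(Q)=\pm c\,\Delta_{[5]\setminus I'}(P)$. Pairwise independence of the columns of $p$ is equivalent to every three of the $P_i$ being independent, i.e.\ general position. Thus each $\Delta_I$ becomes, up to a unit, the linear form defining the plane $\operatorname{span}(P_j,P_k)$ in the $y$-variables. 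The remaining parameter is $Q$ modulo $\GL_2$ and column scaling, that is a point of $\Gr(2,5)^\circ/(\C^*)^5$, which has dimension $6-4=2$. These directions produce the two-parameter family, and all other directions are smooth factors. I expect the main obstacle to be this step: pinning down the Gale identity with a uniform unit $c(Q)$ and keeping the sign conventions consistent.

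\emph{Part (3).} Projectivize to get the ten lines $P_iP_j$ through five general points in $\PP^2$. Each $P_i$ lies on exactly four of the lines. Two lines $P_iP_j$ and $P_kP_l$ with $\{i,j\}\cap\{k,l\}=\emptyset$ meet in a point that is not one of the $P_m$, since no three of the $P_i$ are collinear. Such a point lies on no third line, because any third line through it would have to be of the form $P_rP_s$ with $\{r,s\}$ disjoint from both pairs, and three pairwise disjoint pairs cannot fit inside $[5]$. This gives $5$ quadruple points and $15$ double points. Hence $b_2=5\cdot3+15\cdot1=30$, and
\[\chi(t)=t^3-10t^2+30t-21=(t-1)(t^2-9t+21).\]
The quadratic factor has discriminant $81-84<0$, so $\chi$ does not split over $\Z$, and Terao's factorization theorem shows the arrangement is not free.
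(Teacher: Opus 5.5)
Parts (1) and (3) follow the paper's arguments. For (1) this is the slice $M=g[I_m\mid W]$, Laplace expansion along the identity columns, transposition and Yim's theorem (Propositions~\ref{prop:slice}, \ref{prop:localGale} and Corollary~\ref{cor:fullrankfree}). For (3) it is the count of five quadruple and fifteen double points, followed by Terao's theorem (Proposition~\ref{prop:tenplanes}). Part (2) is the idea of Propositions~\ref{prop:chart} and~\ref{prop:transversal} in coordinate-free form. However, the parametrization you chose for $\ell$ modulo $\operatorname{row}Q$ makes your key identity false.

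Let $K$ be the $5\times3$ matrix with rows $P_i$. Then $\ell_i=\langle P_i,y\rangle$ means $\ell=Ky\in\ker Q$, and this fails in two ways.

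\emph{Not always a complement.} Over $\C$, $\ker Q$ need not be a complement of $\operatorname{row}Q$, because the bilinear form can degenerate on $\operatorname{row}Q$. For example, the matrix $Q$ with rows $(1,1,1,1,2i)$ and $(0,1,2,3,3i)$ has pairwise independent columns, yet its first row lies in $\ker Q$.

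\emph{Wrong identity.} Even when $\ker Q$ is a complement, you actually get $\lambda_I\det(P_j,P_k,K^TKy)$ rather than a multiple of $\det(P_j,P_k,y)$. Take $Q=[I_2\mid X]$ with $X=\left(\begin{smallmatrix}1&1&1\\1&2&3\end{smallmatrix}\right)$ and $K=\left(\begin{smallmatrix}-X\\ I_3\end{smallmatrix}\right)$. Substituting $\ell=Ky$ gives $\Delta_{123}=\ell_3-\ell_1-\ell_2=3y_1+3y_2+4y_3$, whereas $\det(P_4,P_5,y)=y_1$.

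\emph{The fix.} Use the dual coordinate $y=K^T\ell=\sum_i\ell_iP_i$. It is well defined on $\C^5/\operatorname{row}Q$ because $\ker K^T=\operatorname{row}Q$. With it the identity has a short proof with no sign bookkeeping:
\begin{enumerate}[label=(\roman*),leftmargin=2em]
\item The functional $\ell\mapsto\Delta_I\left(\begin{smallmatrix}Q\\ \ell\end{smallmatrix}\right)$ vanishes on $\operatorname{row}Q$ and on $e_j,e_k$.
\item Hence it equals $\langle\ell,w\rangle$ with $w\in\ker Q$ and $w_j=w_k=0$. So $w=Ku$ with $\langle P_j,u\rangle=\langle P_k,u\rangle=0$.
\item Then $\langle\ell,Ku\rangle=\langle K^T\ell,u\rangle$ is proportional to $\det(P_j,P_k,K^T\ell)$.
\end{enumerate}
This gives $\Delta_I=\lambda_I(Q)\det(P_j,P_k,K^T\ell)$. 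The factor $\lambda_I(Q)$ is nonzero because the functional's coefficients are $\pm\Delta_{I\setminus\{i\}}(Q)\neq0$. You do not need a uniform constant $c(Q)$, since only the divisor matters.

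Alternatively, do as the paper does. Normalize $Q=B[I_2\mid X]$ and take the coordinate complement $\ell=(0,0,c)$. Then $y=c$, and the Gale transform is explicitly $(a,b,e_3,e_4,e_5)$ up to sign, by Proposition~\ref{prop:gale}(c). Each $\Delta_I$ is then just a minor of $Z=\binom{X}{c}$ containing the last row.
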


\subsection*{Remarks}
\begin{enumerate}[label=(\alph*),leftmargin=2em]
\item \emph{Freeness is invisible in the topology.} $D_{2,5}$ is free and $D_{3,5}$ is not, yet by Theorem~\ref{thmA} their complements differ only by the factor $\GL_3$ versus $\GL_2$, and both Poincar\'e polynomials factor as a product of the Poincar\'e polynomial of a general linear group and linear factors. So the analogy with Terao's theorem that motivated \cite{Yim} breaks down already for $3\times5$ matrices.
\item \emph{The obstruction reappears one step later.} The quadratic $t^2-9t+21$ that obstructs freeness of the transversal arrangements of $D_{3,5}$ also appears in the topology of $U_{3,6}$: the fibre of the projection $U_{3,6}\to U_{3,5}$ forgetting the last column is the complement of $\cA(v_1,\dots,v_5)$, and
\[\Poin(U_{3,6},t)=(1+t)^6(1+t^3)(1+t^5)(1+2t)(1+3t)(1+9t+21t^2)\]
by Proposition~\ref{prop:36}, which has a factor irreducible over $\Q$.
\item \emph{Other sizes.} The chart of Theorem~\ref{thmC}(2) exists for every $m\times n$. Computing logarithmic vector fields in this chart shows that $D_{3,6}$, $D_{4,6}$ and $D_{5,7}$ are not free either (Section~\ref{sec:other}). This supports Conjecture~\ref{conj}: \emph{$D_{m,n}$ is free if and only if $m\le2$ or $n=m+1$.}
\end{enumerate}

\subsection*{Organization}
Section~\ref{sec:prelim} collects notation and standard facts; it also introduces the logarithmic vector fields $\psi_{i\to k}$ that exist for every $m\times n$. Section~\ref{sec:slice} contains the slice construction and Gale duality. Section~\ref{sec:topology} treats the topology of the complements, including the ten-plane arrangement and $U_{3,6}$. Section~\ref{sec:local} describes the local structure of $D_{m,n}$, and Section~\ref{sec:35} proves Theorem~\ref{thmB}. Section~\ref{sec:other} reports on other sizes. Appendix~\ref{app:code} contains the Macaulay2 code. Appendix~\ref{app:2n} gives a short proof of the freeness of $D_{2,n}$, which is used in Theorem~\ref{thmC}(1).

\section{Preliminaries}\label{sec:prelim}

\subsection{Notation}
Throughout, $1\le m<n$, $R=\C[x_{rk}\mid 1\le r\le m,\ 1\le k\le n]$, $M=(x_{rk})$ is the generic matrix with columns $v_1,\dots,v_n$, and $\Delta_I=\det M_I$ for $I\in\binom{[n]}{m}$, the columns of $M_I$ being taken in increasing order. For vectors $w_1,\dots,w_m\in\C^m$ we write $[w_1,\dots,w_m]=\det(w_1|\cdots|w_m)$. For $v\in\C^m$ and a column index $k$ we write
\[v\cdot\dd{v_k}:=\sum_{r=1}^m v^{(r)}\frac{\partial}{\partial x_{rk}},\]
so that $(v\cdot\dd{v_k})(\Delta_I)=0$ if $k\notin I$, and otherwise equals the determinant obtained from $M_I$ by replacing the column $v_k$ with $v$.

A polynomial vector field $\theta=\sum g_{rk}\,\partial/\partial x_{rk}$ is \emph{homogeneous of degree $d$} if all $g_{rk}$ are homogeneous of degree $d$.
For a subset $P\subset[m]$ and $Q\subset[n-m]$ with $|P|=|Q|$ and a matrix $W\in\C^{m\times(n-m)}$, $W_{P,Q}$ denotes the submatrix with rows $P$ and columns $Q$. By convention $\det W_{\emptyset,\emptyset}=1$.

\subsection{Logarithmic vector fields}
For a reduced polynomial $f\in S=\C[y_1,\dots,y_N]$ put $\Der(-\log f)=\{\theta\in\Der_S\mid\theta(f)\in fS\}$. If $f=\prod_i f_i$ with pairwise non-associate irreducible $f_i$, then $\Der(-\log f)=\bigcap_i\{\theta\mid\theta(f_i)\in f_iS\}$. The divisor $V(f)$ is free at $p$ if the localization (or, equivalently, the analytic stalk) $\Der(-\log f)_p$ is a free $\mathcal O_p$-module of rank $N$. We use the following standard facts that are well-known among the experts. However, we recall them for the completness.

\begin{lemma}\label{lem:standard}
Let $f\in S$ be reduced.
\begin{enumerate}[label=\textup{(\alph*)},leftmargin=2em]
\item \textup{(Saito's criterion \cite{Saito})} Elements $\theta_1,\dots,\theta_N\in\Der(-\log f)$ form a basis if and only if $\det(\theta_i(y_j))=c\cdot f$ with $c$ a unit. For arbitrary $\theta_i\in\Der(-\log f)$ one always has $\det(\theta_i(y_j))\in fS$.
\item If $f$ is homogeneous and $\theta_1,\dots,\theta_N\in\Der(-\log f)$ are homogeneous with $\sum_i\deg\theta_i=\deg f$ \textup{(}in the convention above\textup{)} and $\det(\theta_i(y_j))\neq0$, then they form a basis. If $V(f)$ is free, the degrees of any homogeneous basis add up to $\deg f$.
\item The set of points where $V(f)$ is not free is Zariski closed. If $f$ is homogeneous, $V(f)$ is free everywhere if and only if it is free at $0$, if and only if the graded module $\Der(-\log f)$ is free.
\item If $\alpha\colon(\C^N,p)\to(\C^{N-k},p')\times(\C^k,0)$ is a local analytic isomorphism taking $V(f)$ to $V(f')\times\C^k$, then $\Der(-\log f)_p\cong\Der(-\log f')_{p'}\oplus\mathcal O^k$. In particular, $V(f)$ is free at $p$ if and only if $V(f')$ is free at $p'$.
\item If $\Der(-\log f)$ is generated by $g$ elements with presentation matrix $P$, then the stalk at $p$ is minimally generated by $g-\rk P(p)$ elements. Here $P(p)$ is the scalar matrix obtained by evaluating $P$ at $p$. In particular, $V(f)$ is not free at $p$ if and only if $\rk P(p)<g-N$.
\item Formation of $\Der(-\log f)$ commutes with flat base change. In particular, if $f$ has rational coefficients, computations over $\Q$ give the answer over $\C$.
\end{enumerate}
\end{lemma}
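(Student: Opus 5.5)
We prove the six items of Lemma~\ref{lem:standard} one at a time, mostly by reducing to standard commutative algebra.

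\textbf{(a) Saito's criterion.} For the inclusion $\det(\theta_i(y_j))\in fS$, let $f_i$ be an irreducible factor of $f$ and $p$ a general smooth point of $V(f_i)$, away from the other components. Choose local coordinates with $f_i=z_1$. Each $\theta_i(z_1)$ is divisible by $z_1$, so one row of the coefficient matrix in these coordinates is divisible by $z_1$. Hence $z_1$ divides the determinant near $p$. The divisors $f_i$ are pairwise distinct and $f$ is reduced, so $f$ divides the determinant.

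For the converse, suppose $\det(\theta_i(y_j))=cf$ with $c$ a unit. Take $\eta\in\Der(-\log f)$ and solve $\eta=\sum a_i\theta_i$ by Cramer's rule. Each numerator is a determinant of $N$ logarithmic fields, so it lies in $fS$ by the first part. Therefore $a_i\in S$. Linear independence follows from $\det\neq0$.

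For the forward direction, suppose the $\theta_i$ form a basis. Since $f\partial_j\in\Der(-\log f)$, we can write $f\cdot\mathrm{Id}=A\cdot(\theta_i(y_j))$. Taking determinants gives $f^N=\det A\cdot\det(\theta_i(y_j))$. Combined with $f\mid\det(\theta_i(y_j))$, this shows the determinant equals $uf$ with $u$ dividing $f^{N-1}$. To see that $u$ is a unit, localize at a generic smooth point of each component and use $\Der(-\log z_1)=\langle z_1\partial_1,\partial_2,\dots\rangle$, whose determinant is exactly $z_1$.

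\textbf{(b) Homogeneous version.} The determinant $\det(\theta_i(y_j))$ is homogeneous of degree $\sum\deg\theta_i=\deg f$, is nonzero, and is divisible by $f$ by (a). So it is a nonzero constant multiple of $f$, and (a) applies. For the converse, if $V(f)$ is free, a homogeneous basis has determinant $cf$ with $c$ a nonzero constant, since the unit $c$ must be a constant in the graded setting. Comparing degrees gives $\sum\deg\theta_i=\deg f$.

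\textbf{(c) Closedness of the non-free locus.} The non-free locus of a coherent sheaf is closed. Here $\Der(-\log f)$ is reflexive of rank $N$, and its non-free locus is the support of $\mathrm{Ext}^1$, or equivalently it is cut out by the Fitting ideals of a presentation. In the homogeneous case this locus is closed and $\C^*$-stable, so if it is nonempty it contains $0$. Finally, a graded module over a polynomial ring is free if and only if its localization at the irrelevant ideal is free, by graded Nakayama.

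\textbf{(d) Product with a smooth factor.} The isomorphism $\alpha$ identifies logarithmic fields for $V(f')\times\C^k$ with $\Der(-\log f')\boxtimes\mathcal O\oplus\mathcal O\langle\partial_{t_1},\dots,\partial_{t_k}\rangle$. To see this, split a logarithmic field into its $\partial_t$-part, which is unconstrained because $f'$ does not depend on $t$. The remaining part is then logarithmic for $f'$ with coefficients in $\mathcal O_{\C^N}$, which is flat over $\mathcal O_{\C^{N-k}}$. Tensoring with flat modules preserves freeness and reflects it, so $V(f)$ is free at $p$ exactly when $V(f')$ is free at $p'$.

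\textbf{(e) Minimal number of generators.} By Nakayama, the minimal number of generators of the stalk at $p$ is $\dim_\C$ of the fibre $\coker(P)\otimes\C(p)$, which is $g-\rk P(p)$. The stalk is free of rank $N$ exactly when this number equals $N$, so $V(f)$ fails to be free at $p$ precisely when $\rk P(p)<g-N$.

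\textbf{(f) Flat base change.} The module $\Der(-\log f)$ is the kernel of the $S$-linear map $\Der_S\oplus S\to S/(f)$ given by $(\theta,\cdot)\mapsto\theta(f)$, or equivalently it can be written as a finite intersection. Kernels commute with flat base change, and $\Q\to\C$ is flat. Hence computations over $\Q$ give the answer over $\C$.

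\textbf{Main obstacle.} The only subtle point is the unit claim in the forward direction of (a), which needs the codimension-one localization argument. Everything else is routine.
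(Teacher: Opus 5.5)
Your proposal is correct and follows the same route as the paper's sketch item by item: tangency at generic smooth points for $\det(\theta_i(y_j))\in fS$, degree comparison for (b), the product decomposition for (d), Nakayama for (e), and exactness of flat base change for (f); in addition you supply what the paper leaves to the citation of Saito (the Cramer's-rule and codimension-one localization arguments) and a Fitting-ideal/cone argument for (c), which the paper does not comment on. One small slip: in (c), the non-free locus of a reflexive module is in general $\bigcup_{i\ge1}\operatorname{Supp}\mathrm{Ext}^i(\Der(-\log f),S)$, not just the support of $\mathrm{Ext}^1$, but your Fitting-ideal description is correct and is all you need.
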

\begin{proof}[Comments on the proof]
Item (a) is \cite[(1.8)]{Saito}. The second assertion holds because at a smooth point of $V(f)$ all $\theta_i$ are tangent to $V(f)$, so the determinant vanishes on $V(f)$, and $f$ is reduced. Item (b) follows from (a), since $\det(\theta_i(y_j))$ is then a nonzero multiple of $f$ of the same degree. Item (d) holds because $\Der(-\log(D\times\C^k))=\Der(-\log D)\boxtimes\mathcal O\oplus\bigoplus_j\mathcal O\,\partial_{z_j}$. Finally, observe that item (e) is Nakayama's lemma, and item (f) holds because $\Der(-\log f)$ is the kernel of a map of free modules.
\end{proof}

\subsection{Symmetries and linear logarithmic fields}
The group $G=\GL_m(\C)\times(\C^*)^n$ acts on $\C^{m\times n}$ by $(g,t)\cdot M=gM\operatorname{diag}(t)$, and $\Delta_I((g,t)\cdot M)=\det(g)\prod_{k\in I}t_k\,\Delta_I(M)$. Hence $G$ preserves every component of $D_{m,n}$. The symmetric group $S_n$, permuting columns, preserves $D_{m,n}$ and permutes its components. Infinitesimally we obtain the linear logarithmic vector fields
\[\xi_A=\sum_{r,s=1}^m\sum_{k=1}^nA_{rs}\,x_{sk}\frac{\partial}{\partial x_{rk}}\ \ (A\in\mathfrak{gl}_m),\qquad \eta_k=\sum_{r=1}^mx_{rk}\frac{\partial}{\partial x_{rk}}\ \ (1\le k\le n),\]
with $\xi_A(\Delta_I)=\operatorname{tr}(A)\Delta_I$ and $\eta_k(\Delta_I)=[k\in I]\,\Delta_I$. Let $\cL_{m,n}$ denote their $\C$-span. The only linear relation is $\xi_{\mathrm{id}}=\sum_k\eta_k$ (both equal the Euler field).

\begin{lemma}\label{lem:linear}
If $n\ge m+1$ then $\dim\cL_{m,n}=m^2+n-1$, and any basis of $\cL_{m,n}$ is linearly independent over $R$.
\end{lemma}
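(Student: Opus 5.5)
The plan is to reduce both claims to one pointwise computation: I will show that evaluation at a single well-chosen point $M_0\in\C^{m\times n}$ is injective on $\cL_{m,n}$, and that the image there has dimension $m^2+n-1$.

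\emph{Reduction to one point.} Consider the $\C$-linear map
\[\varepsilon\colon \mathfrak{gl}_m\oplus\C^n\to\C^{m\times n},\qquad (A,c)\mapsto \Big(\xi_A+\sum_k c_k\eta_k\Big)\Big|_{M_0}.\]
Suppose $\ker\varepsilon$ is exactly the line spanned by $(\mathrm{id},-1,\dots,-1)$. This line is the known relation $\xi_{\mathrm{id}}=\sum_k\eta_k$, so it lies in the kernel of the surjection $\mathfrak{gl}_m\oplus\C^n\to\cL_{m,n}$. Moreover $\varepsilon$ factors through that surjection. Hence the surjection has kernel exactly this line, so $\dim\cL_{m,n}=m^2+n-1$, and evaluation at $M_0$ is injective on $\cL_{m,n}$.

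For $R$-independence, take a basis $\theta_1,\dots,\theta_N$ of $\cL_{m,n}$, with $N=m^2+n-1$. Form the $N\times mn$ coefficient matrix $(\theta_i(x_{rk}))$. Its evaluation at $M_0$ has rank $N$, so some $N\times N$ minor is a nonzero polynomial. Therefore the $\theta_i$ are linearly independent over $R$, and even over $\operatorname{Frac}(R)$.

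\emph{The point computation.} I take $M_0=(I_m\mid W)$, where $W\in\C^{m\times(n-m)}$ has all entries nonzero; this uses $n\ge m+1$. The vector $\varepsilon(A,c)$ is the matrix $AM_0+M_0\operatorname{diag}(c)$. Suppose it vanishes.
\begin{itemize}[leftmargin=2em]
\item The first $m$ columns give $A+\operatorname{diag}(c_1,\dots,c_m)=0$. So $A$ is diagonal, with $A_{rr}=-c_r$.
\item Column $m+1$ gives $(A_{rr}+c_{m+1})w_{r}=0$ for every $r$, where $w=v_{m+1}$. Since all $w_r\neq0$, every $A_{rr}=-c_{m+1}$. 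Hence $A=a\,\mathrm{id}$ and $c_1=\dots=c_m=c_{m+1}=-a$.
\item Each further column $v_k$ with $k>m+1$ is nonzero and satisfies $(a+c_k)v_k=0$. So $c_k=-a$ as well.
\end{itemize}
Thus $\ker\varepsilon=\C\cdot(\mathrm{id},-1,\dots,-1)$, as required. Geometrically, this says that the stabilizer of $M_0$ in $G$ is the one-dimensional subgroup $\{(s\,\mathrm{id},s^{-1},\dots,s^{-1})\}$, so the $G$-orbit of $M_0$ has dimension $m^2+n-1$.

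\emph{Main obstacle.} There is no serious obstacle. The only point needing care is the choice of $M_0$: the column $v_{m+1}$ must have all entries nonzero, which is exactly where the hypothesis $n\ge m+1$ enters. When $n=m$ the statement fails, since then the $\eta_k$ are already expressible through the $\xi_A$ with $A$ diagonal. The rest is the standard observation that pointwise independence at one point implies independence over $R$.
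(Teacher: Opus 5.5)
Your proof is correct and is essentially the paper's argument: both reduce the statement to independence of the fields at a single point, i.e.\ to the $G$-orbit there having dimension $m^2+n-1$ because its stabilizer is one-dimensional. The only difference is that the paper finds the group stabilizer at a point of $U_{m,n}$ (an element fixing $n\ge m+1$ lines in general position is scalar), while you compute the infinitesimal stabilizer directly at $(I_m\mid W)$. A small correction to your closing remark: as polynomial fields the $\eta_k$ are not combinations of the $\xi_A$ even when $n=m$ (they only agree with $\xi_{E_{kk}}$ at the point $I_m$), and comparing coefficients shows $\dim\cL_{m,n}=m^2+n-1$ for every $n$; what fails for $n=m\ge2$ is only the $R$-independence, since $m^2+m-1>m^2$.
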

\begin{proof}
It suffices to show that the fields are independent at a point $p\in U_{m,n}$, i.e.\ that the orbit $G\cdot p$ has dimension $m^2+n-1$. If $(g,t)\cdot p=p$ then $g$ preserves the $n\ge m+1$ lines $\C v_k$, which are in general position, so $g$ is a scalar $\lambda$; then $t_k=\lambda^{-1}$ for all $k$. The stabilizer is therefore one-dimensional.
\end{proof}

\subsection{The vector fields $\psi_{i\to k}$}
For distinct column indices $i\ne k$ put
\begin{equation}\label{eq:psi}
c_{i\to k}=\prod_{\substack{I\ni i\\ I\not\ni k}}\Delta_I,\qquad \psi_{i\to k}=c_{i\to k}\,(v_k\cdot\dd{v_i}).
\end{equation}
Thus $\psi_{i\to k}$ moves the column $v_i$ in the direction of $v_k$, damped by all minors that this motion would destroy. Equivalently, $\psi_{i\to k}(M)=c_{i\to k}(M)\cdot ME_{ki}$, where $E_{ki}$ is the elementary $n\times n$ matrix.

Grade $R$ by $\Z^m\times\Z^n$ via $\deg x_{rk}=(e_r,e_k)$, and give $\partial/\partial x_{rk}$ the degree $-(e_r,e_k)$. Then every $\Delta_I$ is homogeneous, $\Der(-\log f_{m,n})$ is multigraded, and the fields in $\cL_{m,n}$ have degree $(e_s-e_r,0)$ ($\xi_{E_{rs}}$) or $0$ ($\eta_k$).

\begin{lemma}\label{lem:psi}
$\psi_{i\to k}\in\Der(-\log f_{m,n})$. It is homogeneous of degree $m\binom{n-2}{m-1}+1$ and $\SL_m$-equivariant. Its multidegree is
\[\Big(\tbinom{n-2}{m-1}(1,\dots,1)\ ,\ \big(\tbinom{n-2}{m-1}-1\big)e_i+\tbinom{n-3}{m-2}\textstyle\sum_{j\ne i,k}e_j+e_k\Big).\]
This multidegree is independent of $i$ if and only if $\binom{n-3}{m-1}=1$, i.e.\ if and only if $n=m+2$ (or $m=1$).
\end{lemma}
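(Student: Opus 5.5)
The plan is to verify each assertion directly from the definition \eqref{eq:psi}, computing the action of $\psi_{i\to k}$ on each minor $\Delta_I$ separately and then doing the degree bookkeeping.

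\emph{Logarithmic property.} I will use the description $\Der(-\log f)=\bigcap_I\{\theta\mid\theta(\Delta_I)\in\Delta_IR\}$ from the preliminaries. This requires the standard fact that the maximal minors $\Delta_I$ are irreducible and pairwise non-associate. I will quote it, or justify it by noting that distinct $\Delta_I$ involve distinct sets of variables and that each $\Delta_I$ is irreducible as a generic determinant. Fix $I$ and recall that $(v_k\cdot\dd{v_i})(\Delta_I)$ is $\Delta_I$ with column $v_i$ replaced by $v_k$. There are three cases.
\begin{itemize}[leftmargin=2em]
\item If $i\notin I$, then $\psi_{i\to k}(\Delta_I)=0$.
\item If $i,k\in I$, the modified matrix has two equal columns, so again $\psi_{i\to k}(\Delta_I)=0$.
\item If $i\in I$ and $k\notin I$, then $\Delta_I$ is one of the factors of $c_{i\to k}$, so $\psi_{i\to k}(\Delta_I)\in\Delta_IR$.
\end{itemize}
Hence $\psi_{i\to k}$ is logarithmic along every component.

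\emph{Degree and equivariance.} The number of sets $I$ with $i\in I$ and $k\notin I$ is $\binom{n-2}{m-1}$, and each such $\Delta_I$ has degree $m$. The coefficients $x_{rk}$ of $v_k\cdot\dd{v_i}$ are linear. This gives the degree $m\binom{n-2}{m-1}+1$. For $\SL_m$-equivariance, note that each $\Delta_I$ is $\SL_m$-invariant, since $\Delta_I(gM)=\det(g)\Delta_I(M)$. The vector field $M\mapsto ME_{ki}$ commutes with left multiplication by $g$. So $\psi_{i\to k}(gM)=g\,\psi_{i\to k}(M)$.

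\emph{Multidegree.} Each $\Delta_I$ has multidegree $\big((1,\dots,1),\sum_{j\in I}e_j\big)$. Summing over the sets $I$ appearing in $c_{i\to k}$ gives row part $\binom{n-2}{m-1}(1,\dots,1)$. In the column part:
\begin{itemize}[leftmargin=2em]
\item $e_i$ appears $\binom{n-2}{m-1}$ times;
\item each $e_j$ with $j\ne i,k$ appears $\binom{n-3}{m-2}$ times (sets containing $i,j$ but not $k$);
\item $e_k$ does not appear.
\end{itemize}
Each term $c_{i\to k}x_{rk}\,\partial/\partial x_{ri}$ contributes an additional $(e_r,e_k)-(e_r,e_i)$. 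The row contributions cancel, and the column part gains $e_k-e_i$. This yields exactly the stated multidegree.

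\emph{Independence of $i$.} For fixed $k$, I will compare the multidegrees for two different sources $i\ne i'$. The coefficient of $e_i$ is $\binom{n-2}{m-1}-1$ in one case and $\binom{n-3}{m-2}$ in the other. All other coordinates already agree. So the multidegrees coincide iff $\binom{n-2}{m-1}-\binom{n-3}{m-2}=1$. By Pascal's rule this difference equals $\binom{n-3}{m-1}$, and $\binom{n-3}{m-1}=1$ iff $m-1=0$ or $m-1=n-3$, i.e.\ $m=1$ or $n=m+2$.

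The only step needing care is the bookkeeping in degenerate ranges, for instance $n=m+1$, where $\binom{n-3}{m-1}=0$ and there are at least two sources $i$ to compare. There I will check that the conventions $\binom{a}{b}=0$ for $b>a$ or $b<0$ keep all the counts valid. I do not expect any genuine obstacle: the key point is the three-case analysis in the logarithmic property, and everything else is counting.
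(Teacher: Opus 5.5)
Your proposal is correct and follows the paper's proof essentially step by step: the same three-case analysis of $\psi_{i\to k}(\Delta_I)$, the same count of the $\binom{n-2}{m-1}$ factors of $c_{i\to k}$ for the degree and multidegree, the same equivariance argument, and Pascal's rule for the final claim. One small simplification: you do not need the irreducibility of the $\Delta_I$, because the Leibniz rule already gives $\bigcap_I\{\theta\mid\theta(\Delta_I)\in\Delta_IR\}\subseteq\Der(-\log f_{m,n})$, and this inclusion is the only direction the logarithmic property uses.
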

\begin{proof}
If $i\notin I$ then $\psi_{i\to k}(\Delta_I)=0$. If $i,k\in I$, replacing $v_i$ by $v_k$ in $M_I$ produces two equal columns, so again $\psi_{i\to k}(\Delta_I)=0$. If $i\in I\not\ni k$, then $\Delta_I$ divides $c_{i\to k}$ and hence divides $\psi_{i\to k}(\Delta_I)$. The degree is $m$ times the number $\binom{n-2}{m-1}$ of factors of $c_{i\to k}$, plus one. Equivariance holds because $c_{i\to k}$ is $\SL_m$-invariant and $M\mapsto ME_{ki}$ is equivariant. For the multidegree, the column $i$ occurs in all $\binom{n-2}{m-1}$ factors of $c_{i\to k}$, a column $j\ne i,k$ occurs in $\binom{n-3}{m-2}$ of them, and $v_k\cdot\dd{v_i}$ contributes $e_k-e_i$. The last assertion follows from Pascal's rule $\binom{n-2}{m-1}=\binom{n-3}{m-2}+\binom{n-3}{m-1}$.
\end{proof}

\begin{example}\label{ex:psi35}
For $m=3$, $n=5$ each $c_{i\to k}$ is a product of three minors, so $\deg\psi_{i\to k}=10$. For example $\psi_{1\to5}=\Delta_{123}\Delta_{124}\Delta_{134}\,(v_5\cdot\dd{v_1})$. In terms of the multigrading by $\Z^3\times\Z^5$ (row degrees, column degrees), $\psi_{i\to k}$ has degree $\big((3,3,3),(2,2,2,2,2)-e_k\big)$, independently of $i$. For $m=2$, $n=4$, the field $\psi_{4\to1}=\Delta_{24}\Delta_{34}(v_1\cdot\dd{v_4})$ is the degree-$5$ basis element of \cite[Ex.~1.2]{Yim}.
\end{example}

\subsection{Hyperplane arrangements}
For a essential arrangement $\cA$ of hyperplanes in $\C^\ell$ with intersection lattice $L(\cA)$ and M\"obius function $\mu$, the characteristic polynomial is $\chi_\cA(t)=\sum_{X\in L(\cA)}\mu(X)t^{\dim X}$. The complement $M(\cA)$ has Poincar\'e polynomial $(-t)^\ell\chi_\cA(-1/t)$, and its cohomology ring is the Orlik--Solomon algebra, generated in degree one by the classes $d\log\alpha_H$ \cite{OT}. Over a finite field one has $|M(\cA)(\F_q)|=\chi_\cA(q)$ whenever the lattice is unchanged by reduction.

\begin{theorem}[Terao's factorization theorem \cite{Terao81}]\label{thm:terao}
If $\cA$ is free with exponents $d_1,\dots,d_\ell$, i.e., the degrees of a homogeneous basis of $\Der(-\log\cA)$, then $\chi_\cA(t)=\prod_{i=1}^\ell(t-d_i)$.
\end{theorem}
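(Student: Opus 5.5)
The plan is to derive the factorization from the \emph{Solomon--Terao formula}, which expresses $\chi_\cA$ through the Hilbert series of the modules of logarithmic differential forms. Write $S=\C[y_1,\dots,y_\ell]$, let $Q=\prod_H\alpha_H$ be a defining polynomial of $\cA$, and set
\[\Omega^p(\cA)=\{\omega\in \tfrac1Q\Omega^p_S \mid Q\,\omega \text{ and } Q\,d\omega \text{ are regular}\}.\]
The formula states that
\[\chi_\cA(t)=(-1)^\ell\lim_{x\to1}\sum_{p=0}^{\ell}\Poin(\Omega^p(\cA),x)\,\big(t(x-1)-1\big)^p,\]
valid for every central arrangement, free or not.

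\textbf{Step 1: the free case reduces to bookkeeping.} Suppose $\theta_1,\dots,\theta_\ell$ is a homogeneous basis of $\Der(-\log\cA)$ with $\deg\theta_i=d_i$. Saito's criterion (Lemma~\ref{lem:standard}(a)) gives $\det(\theta_i(y_j))=cQ$ with $c\in\C^*$. From this one checks that the dual forms $\omega_1,\dots,\omega_\ell$, defined by $\langle\omega_j,\theta_i\rangle=\delta_{ij}$, form a basis of $\Omega^1(\cA)$. They have degree $-d_i$, with the convention that $dy_j$ has degree $1$ and $\deg y_j=1$. Moreover $\Omega^p(\cA)=\bigwedge^p\Omega^1(\cA)$, again by the determinant criterion for forms. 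Hence
\[\Poin(\Omega^p(\cA),x)=\frac{e_p(x^{-d_1},\dots,x^{-d_\ell})}{(1-x)^\ell},\]
where $e_p$ is the $p$-th elementary symmetric polynomial. Summing against $y^p$ gives
\[\sum_p\Poin(\Omega^p,x)\,y^p=\frac{\prod_i(1+x^{-d_i}y)}{(1-x)^\ell}.\]

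\textbf{Step 2: take the limit.} Substitute $y=t(x-1)-1$. Each factor becomes
\[1+x^{-d_i}\big(t(x-1)-1\big)=(x-1)\Big(x^{-d_i}t-\tfrac{x^{-d_i}-1}{x-1}\Big)\longrightarrow (x-1)(t-d_i).\]
Dividing the product by $(1-x)^\ell=(-1)^\ell(x-1)^\ell$ and multiplying by $(-1)^\ell$ gives $\prod_i(t-d_i)$ as $x\to1$.

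\textbf{Step 3: the main obstacle, proving the Solomon--Terao formula.} Put $\Psi(\cA;x,y)=\sum_p\Poin(\Omega^p(\cA),x)y^p$. I would first show that the limit expression
\[\phi(\cA,t)=(-1)^\ell\lim_{x\to1}\Psi\big(\cA;x,t(x-1)-1\big)\]
is a polynomial in $t$. I would then show it satisfies the same recursion over the intersection lattice as $\chi$. The tools are two-fold. First, localization: $\Omega^p(\cA)_X$ depends only on the localized arrangement $\cA_X$, and a product decomposition $\cA_X\times\C^{\dim X}$ multiplies $\Psi$ by the explicit factor for $\C^{\dim X}$. Second, a filtration of $\Omega^p(\cA)$ by supports, or equivalently the exact sequence coming from residues along hyperplanes. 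Together these express the Euler-characteristic-type combination over the lattice, and Möbius inversion identifies $\phi$ with $\sum_X\mu(X)t^{\dim X}$.

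An alternative route that avoids the general formula is induction via deletion--restriction together with Terao's addition--deletion theorem. However, that route needs freeness to pass to restrictions, which is not automatic. I therefore expect the Solomon--Terao identity to be the technical heart of the argument. In the paper itself I would simply cite \cite{Terao81} and \cite{OT} for it.
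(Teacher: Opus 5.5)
\textbf{Where the argument fails.} The paper gives no proof of this statement; it cites it from \cite{Terao81}. Deriving it from the Solomon--Terao formula, with that formula taken from the literature, is a legitimate route (it is the derivation given in \cite{OT}). Your Step~3 is only an outline, so like the paper your argument ultimately rests on a citation. The real problem is in the part you actually carry out.

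In Step~2 the identity $1+x^{-d}(t(x-1)-1)=(x-1)\bigl(x^{-d}t-\frac{x^{-d}-1}{x-1}\bigr)$ is correct. However, $\frac{x^{-d}-1}{x-1}\to -d$, not $d$. Each factor therefore tends to $(x-1)(t+d_i)$, and your normalization returns $\prod_i(t+d_i)$.

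The case $\ell=1$, $Q=y$ confirms this. Here $\Omega^1(\cA)=S\,dy/y$ and $\Psi=(1+x^{-1}y)/(1-x)$, and your formula gives $t+1$ instead of $\chi_\cA(t)=t-1$. So the version of the Solomon--Terao formula you wrote down is wrong, and the slip in the limit hides this. You have combined the substitution $t(x-1)-1$ and the sign $(-1)^\ell$ of the derivation version with the modules $\Omega^p(\cA)$ of the forms version.

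Step~1 has a related inconsistency. If $\deg dy_j=1$, the dual forms $\omega_i$ have degree $1-d_i$. The degree $-d_i$, and hence your Hilbert series, requires grading forms by the degree of their coefficients, i.e.\ $\deg dy_j=0$.

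\textbf{How to fix it.} Fix the normalization and redo the limit; either of the following works.
\begin{enumerate}[label=\textup{(\roman*)},leftmargin=2em]
\item Keep the forms. Take $\deg dy_j=0$ and $\Poin(\Omega^p(\cA),x)=e_p(x^{-d_1},\dots,x^{-d_\ell})/(1-x)^\ell$. The substitution consistent with the one-hyperplane case is $y=t(1-x)-1$, with no factor $(-1)^\ell$. Then $1+x^{-d}(t(1-x)-1)=x^{-d}(1-x)\bigl(t-(1+x+\dots+x^{d-1})\bigr)$, and the limit is $\prod_i(t-d_i)$.
\item Use the form in which the theorem is usually quoted, with the modules $D^p(\cA)$ of logarithmic $p$-derivations: $\chi_\cA(t)=(-1)^\ell\lim_{x\to1}\sum_p\Poin(D^p(\cA),x)\,(t(x-1)-1)^p$. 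For free $\cA$ one has $D^p(\cA)=\bigwedge^p\Der(-\log\cA)$, with Hilbert series $e_p(x^{d_1},\dots,x^{d_\ell})/(1-x)^\ell$. Since $1+x^{d}(t(x-1)-1)=(1-x)\bigl(1+x+\dots+x^{d-1}-x^{d}t\bigr)\to(1-x)(d-t)$, this again gives $\prod_i(t-d_i)$.
\end{enumerate}
In either case, test the conventions of the version you cite on the one-hyperplane arrangement. A sign error there silently turns $t-d_i$ into $t+d_i$.
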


\subsection{Families of arrangements with constant lattice}
We need the following form, in families, of the principle that the topology of an arrangement complement depends only on the intersection lattice. For one-parameter families this is Randell's theorem \cite{Randell}. We deduce the version over an arbitrary base from Thom's first isotopy lemma, following \cite[Ch.~1]{Dimca}.

\begin{lemma}\label{lem:isotopy}
Let $B$ be a smooth manifold, $\ell\ge1$, and let $\alpha_1,\dots,\alpha_N\colon B\to(\C^\ell)^*\setminus\{0\}$ be smooth maps \textup{(}for instance, $B$ a complex manifold and the $\alpha_i$ holomorphic\textup{)}. For $b\in B$ let $\cA(b)$ be the central arrangement of the hyperplanes $H_i(b)=\ker\alpha_i(b)\subset\C^\ell$. Assume that the rank function
\[r(S)=\rk\{\alpha_i(b)\mid i\in S\}=\operatorname{codim}\textstyle\bigcap_{i\in S}H_i(b),\qquad S\subset[N],\]
does not depend on $b$. Put
\[\mathcal M=\{(b,x)\in B\times\C^\ell\mid \alpha_i(b)(x)\ne0\ \text{for } 1\le i\le N\}.\]
Then $\mathrm{pr}_1\colon\mathcal M\to B$ is a locally trivial fibration whose fibre over $b$ is $M(\cA(b))$. That is, every $b_0\in B$ has an open neighbourhood $V$ with a homeomorphism $\varphi\colon M(\cA(b_0))\times V\to\mathcal M\cap(V\times\C^\ell)$ such that $\mathrm{pr}_1\circ\varphi=\mathrm{pr}_2$.
\end{lemma}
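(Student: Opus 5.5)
We will derive the statement from Thom's first isotopy lemma applied to a Whitney stratification of $B\times\C^\ell$, following \cite[Ch.~1]{Dimca}.

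\emph{Reduction to compact fibres.} Since $\mathrm{pr}_1\colon B\times\C^\ell\to B$ is not proper, we first compactify in the fibre direction. Take $\overline{\mathcal X}=B\times\PP^\ell$, with $\C^\ell\subset\PP^\ell$ the standard affine chart and $H_\infty$ the hyperplane at infinity. Then $\mathrm{pr}_1\colon\overline{\mathcal X}\to B$ is proper and a submersion. For each flat $X$ of the fixed matroid (a subset $S\subset[N]$ closed under the rank function $r$), put $\mathcal X_S=\{(b,x)\mid x\in\bigcap_{i\in S}H_i(b)\}$. Its projective closure $\overline{\mathcal X}_S\subset B\times\PP^\ell$ is a smooth subbundle of rank $\ell-r(S)$. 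This is a genuine smooth projective bundle over $B$: the linear forms $\alpha_i(b)$, $i\in S$, have constant rank $r(S)$, so locally we can choose $r(S)$ of them that are independent, and the common kernel varies smoothly. The same holds for $\overline{\mathcal X}_S\cap H_\infty$.

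\emph{Stratification.} Consider the strata
\[\Sigma_{S}=\overline{\mathcal X}_S\setminus\textstyle\bigcup_{S'\supsetneq S}\overline{\mathcal X}_{S'},\qquad \Sigma_S^\infty=(\overline{\mathcal X}_S\cap H_\infty)\setminus\cdots .\]
Constancy of $r$ is exactly what makes the incidence pattern independent of $b$: $\mathcal X_S\subset\mathcal X_{S'}$ holds fibrewise iff $S'\subset \mathrm{cl}(S)$. It also ensures the strata are smooth and submerse onto $B$. Together with the hyperplane at infinity, they give a finite partition into locally closed smooth submanifolds, and $\mathcal M$ is the open stratum. The main point is Whitney regularity. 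Locally over $b_0$ we can choose smooth frames so that all the $\overline{\mathcal X}_S$ become simultaneously linear. We trivialise by a smooth family $g(b)\in\GL_{\ell+1}$ sending $\cA(b)$ to $\cA(b_0)$. This is the step I expect to be the main obstacle, since such a $g(b)$ need not exist: the arrangements $\cA(b)$ need not be projectively equivalent, because they have moduli.

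Instead, we argue via Whitney's condition directly. The stratification is fibrewise by linear subspaces varying smoothly, and a family of linear subspace flags with constant incidence is Whitney regular. It is locally a stratification by graphs of smooth maps into Grassmannians, and condition (b) follows from the fact that limits of tangent spaces to $\mathcal X_{S}$ contain those of $\mathcal X_{S'}$ by linearity in the fibre variable. If this is awkward, a fallback is to refine the stratification, which is always possible by Whitney's theorem in the semialgebraic or analytic category. The projection is still a stratified submersion generically, and constancy of the lattice ensures each stratum maps submersively.

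\emph{Conclusion.} Thom's first isotopy lemma now applies to the proper stratified submersion $\overline{\mathcal X}\to B$. It gives local trivialisations $\overline{\mathcal X}|_V\cong \PP^\ell\times V$ preserving strata. Restricting to the open stratum yields the homeomorphism $\varphi\colon M(\cA(b_0))\times V\to\mathcal M\cap(V\times\C^\ell)$ with $\mathrm{pr}_1\circ\varphi=\mathrm{pr}_2$.
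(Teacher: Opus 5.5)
Your route is the paper's: compactify the fibres in $\PP^\ell$, use constancy of $r$ to see that every moving flat $\overline{\mathcal X}_S$ (and its trace on $H_\infty$) is a smooth subbundle over $B$, and stratify by the open pieces of these flats. Then apply Thom's first isotopy lemma. The paper handles infinity by adding $\beta_0=x_0$ as an extra form, whose rank function is again constant. As you wrote it, $\Sigma_\emptyset$ still contains points of $H_\infty$ and overlaps $\Sigma^\infty_\emptyset$, but this is cosmetic.

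The genuine gap is Whitney regularity. You correctly identify it as the crux but do not prove it.
\begin{itemize}
\item Your replacement argument only shows that limits of tangent spaces of $\mathcal X_S$ contain the tangent space of $\mathcal X_{S'}$. That is Whitney's condition (a). Thom's lemma needs condition (b), which concerns limits of secant lines through $x_n\in\Sigma_{S'}$ and $y_n\in\Sigma_S$. ``Linearity in the fibre variable'' says nothing about secants.
\item The fallback also fails. The data are only smooth, so Whitney's existence theorem for analytic or semialgebraic sets is not available. In any case, Thom's lemma needs \emph{every} stratum to submerse onto $B$, and a refinement need not preserve this. Being a submersion ``generically'' only gives triviality over a dense open subset of $B$, not near every $b_0$.
\end{itemize}

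The missing observation is that you never need to straighten all flats at once. You are right that doing so can be impossible because of moduli.

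Condition (b) concerns one pair of strata at a time, with $\Sigma_{S'}$ meeting the closure of $\Sigma_S$. Since $\overline{\mathcal X}_S$ is closed, it contains a point of $\Sigma_{S'}$. Hence $S\subset S'$ and $\Sigma_{S'}\subset\overline{\mathcal X}_{S'}\subset\overline{\mathcal X}_S$, while $\Sigma_S$ is open in the submanifold $\overline{\mathcal X}_S$. So it suffices to straighten only $\overline{\mathcal X}_S$:
\begin{itemize}
\item By constant rank, near $b_0$ the fibre of $\overline{\mathcal X}_S$ is $\PP(\ker A(b))$. Here $A(b)=(A_1(b)\,|\,A_2(b))$ is a smooth $s\times(\ell+1)$ matrix with $A_1(b)$ invertible, after permuting coordinates.
\item Put $G(b)=\left(\begin{smallmatrix}-A_1^{-1}A_2&I\\ I&0\end{smallmatrix}\right)$. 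Then $(b,[w])\mapsto(b,[G(b)w])$ is a diffeomorphism carrying $V\times\Lambda$, with $\Lambda$ a fixed linear subspace, onto $\overline{\mathcal X}_S$ over $V$.
\item In an affine chart, $\Sigma_S$ becomes an open subset of a linear space $E$ that contains $\Sigma_{S'}$. So $T_{y_n}\Sigma_S=E$ for all $n$, and every secant lies in $E$.
\item Since (b) is invariant under diffeomorphisms, it holds.
\end{itemize}
This is Step~4 of the paper's proof. With it inserted, your argument is complete.
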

\begin{proof}
We use stratifications, Whitney's condition (b) and Whitney stratifications as defined in \cite[Ch.~1, (1.5)--(1.7)]{Dimca}. Complex manifolds are regarded as real ones.

\emph{Step 1: compactifying the fibres.} The projection $\mathcal M\to B$ is not proper. So, as in the proof of \cite[Ch.~1, (4.1)]{Dimca}, we add the hyperplane at infinity. Let $L=\ell+1$ and $P=\PP(\C\oplus\C^\ell)\cong\PP^\ell$, with homogeneous coordinates $[x_0:x]$. Consider the linear forms on $\C^L$
\[\beta_0=x_0,\qquad \beta_i(b)(x_0,x)=\alpha_i(b)(x)\quad(1\le i\le N).\]
The forms $\beta_1(b),\dots,\beta_N(b)$ do not involve $x_0$. Hence the rank function $\rho(S)=\rk\{\beta_i(b)\mid i\in S\}$, $S\subset\{0,\dots,N\}$, equals $r(S\setminus\{0\})+1$ if $0\in S$ and $r(S)$ otherwise, so it is also independent of $b$. Let $\mathcal E=B\times P$ and $f=\mathrm{pr}_1\colon\mathcal E\to B$. Since $P$ is compact, $f$ is proper. The map $(b,x)\mapsto(b,[1:x])$ is a diffeomorphism of $\mathcal M$ onto the open set
\[\mathcal M'=\{(b,[y])\in\mathcal E\mid \beta_i(b)(y)\ne0\ \text{for } 0\le i\le N\}\]
which is compatible with the projections to $B$, and it identifies $M(\cA(b))$ with $\mathcal M'\cap f^{-1}(b)$. It therefore suffices to prove the lemma for $f|_{\mathcal M'}$.

\emph{Step 2: the moving flats.} For $S\subset\{0,\dots,N\}$ let $\widehat X_S(b)=\bigcap_{i\in S}\ker\beta_i(b)\subset\C^L$, a subspace of dimension $L-\rho(S)$, and let
\[\mathcal X_S=\{(b,[y])\in\mathcal E\mid y\in\widehat X_S(b)\},\]
a closed subset of $\mathcal E$. We claim the following. If $s=\rho(S)<L$, then every $b_0\in B$ has an open neighbourhood $V$ and a smooth map $G\colon V\to\GL_L(\C)$ such that the diffeomorphism
\[\Theta\colon V\times P\to V\times P,\qquad (b,[w])\mapsto(b,[G(b)w]),\]
maps $V\times\Lambda$ onto $\mathcal X_S\cap(V\times P)$, where $\Lambda=\PP(\C^{L-s}\times0)$ is a fixed linear subspace of $P$. To see this, choose $i_1,\dots,i_s\in S$ such that $\beta_{i_1}(b_0),\dots,\beta_{i_s}(b_0)$ are linearly independent. They stay independent on a neighbourhood $V$ of $b_0$. Since $\rho(S)=s$ for every $b$, they span $\langle\beta_i(b)\mid i\in S\rangle$, so $\widehat X_S(b)=\ker A(b)$, where $A(b)$ is the $s\times L$ matrix with rows $\beta_{i_1}(b),\dots,\beta_{i_s}(b)$. After a fixed permutation of the coordinates of $\C^L$ and after shrinking $V$, we can write $A(b)=(A_1(b)\,|\,A_2(b))$ with $A_1(b)\in\GL_s(\C)$. Then
\[G(b)=\begin{pmatrix}-A_1(b)^{-1}A_2(b)& I_s\\ I_{L-s}&0\end{pmatrix}\]
satisfies $\det G(b)=\pm1$, and its first $L-s$ columns form a basis of $\ker A(b)$. This proves the claim. Consequently $\mathcal X_S$ is a closed submanifold of $\mathcal E$, and $f|_{\mathcal X_S}$ is a submersion, because in the chart $\Theta$ it becomes the projection $V\times\Lambda\to V$. If $\rho(S)=L$, then $\mathcal X_S=\emptyset$.

\emph{Step 3: the stratification.} For $(b,[y])\in\mathcal E$ let $F(b,[y])=\{i\mid \beta_i(b)(y)=0\}$. Call $F\subset\{0,\dots,N\}$ \emph{closed} if $\rho(F\cup\{i\})>\rho(F)$ for all $i\notin F$. Let $\mathcal F$ be the set of closed $F$ with $\rho(F)<L$. It is finite and independent of $b$. Note that $\rho(F\cup\{i\})=\rho(F)$ if and only if $\widehat X_F(b)\subset\ker\beta_i(b)$. Since $0\ne y\in\widehat X_{F(b,[y])}(b)$, it follows that every $F(b,[y])$ lies in $\mathcal F$. For $F\in\mathcal F$ put
\[\mathcal S_F=\{(b,[y])\in\mathcal E\mid F(b,[y])=F\}=\mathcal X_F\setminus\bigcup_{i\notin F}\mathcal X_{F\cup\{i\}}.\]
The finitely many sets $\mathcal S_F$ partition $\mathcal E$. Each $\mathcal S_F$ is open in $\mathcal X_F$, so by Step~2 it is a submanifold of $\mathcal E$ on which $f$ is a submersion. Discarding empty sets, $\mathcal S=\{\mathcal S_F\}_{F\in\mathcal F}$ is therefore a stratification of $\mathcal E$ in the sense of \cite[Ch.~1, (1.5)]{Dimca}. Since all $\beta_i\ne0$, we have $\emptyset\in\mathcal F$, and the open stratum is $\mathcal S_\emptyset=\mathcal M'$. On each fibre $f^{-1}(b)=P$, $\mathcal S$ induces the stratification of $P$ by the flats of the projective arrangement $\{\PP(\ker\beta_i(b))\}_{0\le i\le N}$.

\emph{Step 4: Whitney regularity.} Let $F\ne G$ in $\mathcal F$ and $p=(b_0,[y_0])\in\mathcal S_G\cap\overline{\mathcal S_F}$. As $\mathcal X_F$ is closed, $p\in\mathcal X_F$, i.e.\ $F\subset F(p)=G$, and therefore $\mathcal S_G\subset\mathcal X_G\subset\mathcal X_F$. Choose $V$ and $\Theta$ as in Step~2 for $S=F$ and $b_0$. Condition (1.6) of \cite[Ch.~1]{Dimca} is invariant under diffeomorphisms (see the remark following \cite[Ch.~1, (1.6)]{Dimca}). It therefore suffices to check it for $Y=\Theta^{-1}(\mathcal S_F)$ over $X=\Theta^{-1}(\mathcal S_G)$ at $q=\Theta^{-1}(p)$. Here $Y$ is open in $V\times\Lambda$ and $X\subset V\times\Lambda$. Take a chart of $V$ at $b_0$ and an affine chart of $P$ centred at the second component of $q$; in the latter, $\Lambda$ becomes a linear subspace. In these coordinates we work in an open subset of $\mathbb R^m$ with $q=0$, and $V\times\Lambda$ corresponds to an open subset of a linear subspace $E\subset\mathbb R^m$, with $Y$ open in $E$ and $X\subset E$. Let $x_n\in X$ and $y_n\in Y$ be sequences converging to $0$ as in \cite[Ch.~1, (1.6)]{Dimca}. Then $T_{y_n}Y=E$ for all $n$, so $T=E$. Every secant direction $y_n-x_n$ lies in $E$, so the limit line $l$ lies in $E=T$. This is condition (1.6). (It is the argument behind \cite[Ch.~1, (1.8)(i)]{Dimca}.) Hence $\mathcal S$ is a Whitney stratification of $\mathcal E$ in the sense of \cite[Ch.~1, (1.7)]{Dimca}.

\emph{Step 5: Thom's first isotopy lemma.} The map $f$ is smooth and proper, so $f|_{\overline{\mathcal S_F}}$ is proper for every $F$. By Step~3, $f|_{\mathcal S_F}$ is a submersion. We apply Thom's first isotopy lemma \cite[Ch.~1, Thm.~(3.5)]{Dimca} with $M=A=\mathcal E$ and the Whitney stratification $\mathcal S$. It shows that every $b_0\in B$ has an open neighbourhood $V$ and a homeomorphism $\varphi\colon f^{-1}(b_0)\times V\to f^{-1}(V)$ with $f\circ\varphi=\mathrm{pr}_2$ that preserves the strata, i.e.\ maps $(\mathcal S_F\cap f^{-1}(b_0))\times V$ onto $\mathcal S_F\cap f^{-1}(V)$ for every $F\in\mathcal F$. For $F=\emptyset$ this is a trivialization of $f|_{\mathcal M'}$ over $V$ with fibre $\mathcal M'\cap f^{-1}(b_0)\cong M(\cA(b_0))$. By Step~1, this proves the lemma.

The constancy of $r$ was used only in Step~2. Once the moving flats are submanifolds that submerse onto $B$, Whitney regularity is automatic: each stratum is open in a submanifold that contains every stratum meeting its closure.
\end{proof}

\subsection{The moduli space $\Mo{}_{,n}$}
For $n\ge3$ let $\Mo{}_{,n}$ be the moduli space of $n$ distinct ordered points on $\PP^1$. Fixing the first three points at $\infty,0,1$ identifies $\Mo{}_{,n}$ with $\Conf_{n-3}(\C\setminus\{0,1\})$. The Fadell--Neuwirth fibrations \cite{FN} show that $\Mo{}_{,n}$ is a $K(\pi,1)$ with
\[\Poin(\Mo{}_{,n},t)=\prod_{k=2}^{n-2}(1+kt),\qquad |\Mo{}_{,n}(\F_q)|=\prod_{k=2}^{n-2}(q-k).\]
Moreover $\Conf_{n-1}(\C)\cong\C\times\C^*\times\Mo{}_{,n}$, since the affine group acts freely and transitively on the fibres of $\Conf_{n-1}(\C)\to\Mo{}_{,n}$ and a section is given by normalizing two points to $0,1$. Hence $\mathfrak{P}_{n-1}\cong\Z\times\pi_1(\Mo{}_{,n})$, where $\mathfrak{P}_{n-1}$ is the pure braid group \cite{Arnold}.

\section{The \texorpdfstring{$\GL_m$}{GLm}-slice and Gale duality}\label{sec:slice}

\begin{lemma}\label{lem:minorsIW}
Let $W\in\C^{m\times(n-m)}$ and $N=[I_m\,|\,W]$. For $I=I_1\sqcup(m+I_2)$ with $I_1\subset[m]$ and $I_2\subset[n-m]$ one has
\[\Delta_I(N)=\pm\det W_{[m]\setminus I_1,\,I_2}.\]
The assignment $I\mapsto([m]\setminus I_1,I_2)$ is a bijection between $\binom{[n]}{m}$ and the set of pairs $(P,Q)$, $P\subset[m]$, $Q\subset[n-m]$, $|P|=|Q|$. Thus the maximal minors of $[I_m|W]$ are, up to sign, exactly the minors of $W$ of all sizes, each occurring once, the minor of size $0$ being $\Delta_{[m]}(N)=1$.
\end{lemma}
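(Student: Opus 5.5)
The plan is to compute $\Delta_I(N)$ directly by Laplace expansion along the identity columns. Write $I=I_1\sqcup(m+I_2)$ with $|I_1|+|I_2|=m$. The submatrix $N_I$ consists of two kinds of columns. There are the standard basis vectors $e_j$ for $j\in I_1$, and there are the columns $w_q$ of $W$ for $q\in I_2$. First I expand the determinant successively along the columns $e_j$, $j\in I_1$. Each such column has a single nonzero entry, a $1$ in row $j$. Expanding along it deletes row $j$ and that column, at the cost of a sign. After all $|I_1|$ expansions we are left with the square matrix formed from rows $[m]\setminus I_1$ and columns $w_q$, $q\in I_2$. This matrix is exactly $W_{[m]\setminus I_1,I_2}$, and its size $m-|I_1|=|I_2|$ is consistent. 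Hence $\Delta_I(N)=\pm\det W_{[m]\setminus I_1,I_2}$.

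If a specific sign is wanted, it is $(-1)^{\sum_{j\in I_1}(j+\mathrm{pos}(j))}$, where $\mathrm{pos}(j)$ is the position of $j$ within $I$. A cleaner route is to note that permuting the columns of $N_I$ so that the $e_j$ come first, and then permuting the rows so that rows $I_1$ come first, gives a block lower-triangular matrix with identity block $I_{|I_1|}$. Only the $\pm$ matters for the statement, so I will not track the sign further.

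For the bijection, the inverse map sends $(P,Q)$ with $|P|=|Q|$ to $I=([m]\setminus P)\sqcup(m+Q)$. This set has cardinality $(m-|P|)+|Q|=m$. The two assignments are clearly mutually inverse, since any $I\subset[n]$ splits uniquely as $I_1=I\cap[m]$ together with $I_2=(I\setminus[m])-m$. Finally, the size-zero case $P=Q=\emptyset$ corresponds to $I=[m]$, and there $\Delta_{[m]}(N)=\det I_m=1$, matching the convention $\det W_{\emptyset,\emptyset}=1$.

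None of these steps is a real obstacle. The only point requiring care is bookkeeping, namely checking that the sizes match, that $|I_2|=m-|I_1|$, and that the sign does not depend on $W$. Because the identity columns each have a single nonzero entry, the Laplace expansion is entirely mechanical.
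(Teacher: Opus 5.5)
Your proposal is correct and follows the paper's proof: Laplace expansion along the identity columns $e_j$, $j\in I_1$, each of which has a single entry $1$ in row $j$. For the bijection, the paper only counts both sides via $\sum_k\binom mk\binom{n-m}k=\binom nm$, while you write down the explicit inverse $(P,Q)\mapsto([m]\setminus P)\sqcup(m+Q)$, which is equally valid. One trivial slip: after moving rows $I_1$ to the top, the matrix is block upper-triangular, not lower-triangular, and this changes nothing.
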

\begin{proof}
Expand $\det N_I$ along the columns $e_i$, $i\in I_1$: each such column has a single entry $1$, in row $i$. The number of pairs $(P,Q)$ equals $\sum_k\binom mk\binom{n-m}k=\binom nm$.
\end{proof}

For $W\in\C^{m\times(n-m)}$ let
\begin{gather*}
\mu_{m,n}(W)=\prod_{\substack{P\subset[m],\,Q\subset[n-m]\\ 1\le|P|=|Q|}}\det W_{P,Q},\\
E_{m,n}=V(\mu_{m,n})\subset\C^{m\times(n-m)},\qquad Y_{m,n}=\C^{m\times(n-m)}\setminus E_{m,n}.
\end{gather*}
So $Y_{m,n}$ is the variety of $m\times(n-m)$ matrices all of whose minors are nonzero.

\begin{proposition}\label{prop:slice}
Let $\Omega=\{\Delta_{[m]}\ne0\}\subset\C^{m\times n}$. The map
\[\Phi\colon\GL_m\times\C^{m\times(n-m)}\longrightarrow\Omega,\qquad(g,W)\longmapsto g\,[I_m\,|\,W],\]
is an isomorphism of varieties, with inverse
\[M\longmapsto\big(M_{[m]},\,M_{[m]}^{-1}M_{[n]\setminus[m]}\big),\]
and $\Phi^*f_{m,n}=\pm\det(g)^{\binom nm}\mu_{m,n}(W)$. Consequently, $\Phi^{-1}(D_{m,n}\cap\Omega)=\GL_m\times E_{m,n}$ and
\[U_{m,n}\cong\GL_m\times Y_{m,n}.\]
The same holds over $\Z$, hence over every field. For any $I\in\binom{[n]}m$ the analogous statement holds on $\{\Delta_I\ne0\}$, after permuting columns.
\end{proposition}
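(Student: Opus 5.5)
The plan is to verify the isomorphism directly and then transport the divisor through Lemma~\ref{lem:minorsIW}. First, $\Phi$ is a morphism. Its image lies in $\Omega$, because the first $m$ columns of $g[I_m|W]$ form the matrix $g$, so $\Delta_{[m]}(\Phi(g,W))=\det g\ne0$. The proposed inverse $\Psi(M)=(M_{[m]},M_{[m]}^{-1}M_{[n]\setminus[m]})$ is a morphism on $\Omega$, since $M_{[m]}^{-1}=\operatorname{adj}(M_{[m]})/\Delta_{[m]}$ is regular there. I will check both composites:
\[\Psi\Phi(g,W)=(g,g^{-1}gW)=(g,W),\qquad \Phi\Psi(M)=M_{[m]}\bigl[I_m\,|\,M_{[m]}^{-1}M_{[n]\setminus[m]}\bigr]=M.\]
Only polynomial entries and the inverse of $\Delta_{[m]}$ appear, so the same formulas define an isomorphism of $\Z$-schemes $\GL_{m,\Z}\times\mathbb A^{m(n-m)}_\Z\to\{\Delta_{[m]}\ne0\}$. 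This gives the statement over every field.

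Second, I compute the pullback of $f_{m,n}$. For each $I$, multiplicativity of determinants gives $\Delta_I(g[I_m|W])=\det(g)\,\Delta_I([I_m|W])$. By Lemma~\ref{lem:minorsIW}, $\Delta_I([I_m|W])=\pm\det W_{P,Q}$, and $I\mapsto(P,Q)$ is a bijection onto all pairs with $|P|=|Q|$. The pair $(\emptyset,\emptyset)$ corresponds to $I=[m]$ and contributes the factor $1$. Taking the product over the $\binom nm$ subsets gives
\[\Phi^*f_{m,n}=\pm\det(g)^{\binom nm}\mu_{m,n}(W).\]
Since $\det g$ is a unit on $\GL_m\times\C^{m\times(n-m)}$, the zero locus of $\Phi^*f_{m,n}$ is $\GL_m\times E_{m,n}$, which is $\Phi^{-1}(D_{m,n}\cap\Omega)$.

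Third, I deduce the statement about complements. Because $\Delta_{[m]}$ is one of the factors of $f_{m,n}$, we have $U_{m,n}\subset\Omega$, and in fact $U_{m,n}=\Omega\setminus D_{m,n}$. So $\Phi$ restricts to an isomorphism $\GL_m\times Y_{m,n}\to U_{m,n}$. For arbitrary $I$, I will compose with the column permutation $\sigma$ sending $[m]$ to $I$. This permutation is a linear automorphism of $\C^{m\times n}$ that preserves $f_{m,n}$ up to sign, since it permutes the minors and each minor changes at most by a sign. It also carries $\{\Delta_{[m]}\ne0\}$ to $\{\Delta_I\ne0\}$.

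There is no real obstacle here. The only point needing care is the bookkeeping of Lemma~\ref{lem:minorsIW}: the bijection must hit each minor of $W$ exactly once, so that $\mu_{m,n}$ appears with multiplicity one, and the signs must be harmless, which they are because they are global constants.
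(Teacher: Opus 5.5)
Your proposal is correct and follows the same route as the paper, which also rests on $\Delta_I(gN)=\det(g)\Delta_I(N)$, Lemma~\ref{lem:minorsIW}, and $U_{m,n}\subset\Omega$. You simply spell out in more detail the steps the paper leaves implicit: the two composites, the $\Z$-scheme version, and the column permutation for general $I$.
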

\begin{proof}
Since $\Delta_I(gN)=\det(g)\Delta_I(N)$, this follows from Lemma~\ref{lem:minorsIW}. Moreover $U_{m,n}\subset\Omega$.
\end{proof}

\begin{proposition}[Gale duality]\label{prop:gale}
\begin{enumerate}[label=\textup{(\alph*)},leftmargin=2em]
\item Transposition $W\mapsto W^T$ maps $E_{m,n}$ isomorphically onto $E_{n-m,n}$ and $Y_{m,n}$ onto $Y_{n-m,n}$.
\item The map $W\mapsto\operatorname{rowspace}[I_m|W]$ is an isomorphism of $Y_{m,n}$ onto the open subset $\Gr(m,n)^\circ\subset\Gr(m,n)$ where all Pl\"ucker coordinates are nonzero.
\item Under \textup{(b)}, the isomorphism $W\mapsto-W^T$ \textup{(}the composite of \textup{(a)} with $W\mapsto -W$\textup{)} corresponds to $\Lambda\mapsto\Lambda^\perp$, followed by the cyclic renumbering of the coordinates that puts the last $n-m$ first. This is the classical association, or Gale transform, of point configurations \cite{DO,GGMS}.
\end{enumerate}
\end{proposition}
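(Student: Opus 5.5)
Part (a) follows from Lemma~\ref{lem:minorsIW}. Transposition sends $\det W_{P,Q}$ to $\det (W^T)_{Q,P}$, and the index set $\{(P,Q): P\subset[m],\,Q\subset[n-m],\,1\le|P|=|Q|\}$ is carried bijectively onto the corresponding index set for $(n-m)\times m$ matrices. Hence $\mu_{n-m,n}(W^T)=\mu_{m,n}(W)$, because the factors agree one by one. So $W\mapsto W^T$ is a linear isomorphism of $\C^{m\times(n-m)}$ onto $\C^{(n-m)\times m}$ that carries $V(\mu_{m,n})$ onto $V(\mu_{n-m,n})$ and the complements onto each other.

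For (b), I will use the standard affine chart of $\Gr(m,n)$ given by $\Delta_{[m]}\neq0$. The map $W\mapsto\operatorname{rowspace}[I_m|W]$ is an isomorphism from $\C^{m\times(n-m)}$ onto this big cell: every $\Lambda$ with $\Delta_{[m]}(\Lambda)\ne0$ has a unique basis whose first $m\times m$ block is $I_m$, namely $M_{[m]}^{-1}M$ for any basis matrix $M$, exactly as in the inverse map of Proposition~\ref{prop:slice}. In this chart the Plücker coordinates, normalized by $\Delta_{[m]}=1$, are the maximal minors of $[I_m|W]$. By Lemma~\ref{lem:minorsIW} these are, up to sign, exactly the minors of $W$ of all sizes. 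Hence all Plücker coordinates are nonzero precisely when $W\in Y_{m,n}$. Since $\Gr(m,n)^\circ$ is contained in the big cell, the restriction of the chart isomorphism is an isomorphism $Y_{m,n}\cong\Gr(m,n)^\circ$.

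For (c), I will check directly that $\Lambda^\perp$ is spanned by the rows of $[-W^T\,|\,I_{n-m}]$. Indeed $[I_m|W]\,[-W^T|I_{n-m}]^T=-W+W=0$, and the dimensions match since $\dim\Lambda^\perp=n-m$. The cyclic renumbering that puts the last $n-m$ coordinates first turns this matrix into $[I_{n-m}\,|\,-W^T]$. By (b) for $\Gr(n-m,n)$, this is the point corresponding to $-W^T\in Y_{n-m,n}$; note that $-W^T\in Y_{n-m,n}$ by (a) together with the fact that $W\mapsto -W$ preserves the vanishing of minors. It also follows that $\Lambda^\perp\in\Gr(n-m,n)^\circ$, consistent with the classical fact that the Plücker coordinates of $\Lambda^\perp$ are, up to sign, those of $\Lambda$ on complementary index sets. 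Identifying this composite with the Gale transform is then a matter of comparing with the definition in \cite{DO,GGMS}: there the associated configuration is the column configuration of a matrix whose row space is $\Lambda^\perp$. I expect the only delicate point in the whole argument to be the sign and ordering bookkeeping in this last step, namely matching conventions so that the cyclic renumbering, rather than some other permutation, appears.
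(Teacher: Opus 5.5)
Your proposal is correct and follows essentially the same route as the paper. Part (a) rests on the equality of the minors of $W$ and $W^T$. Part (b) uses the big-cell chart together with Lemma~\ref{lem:minorsIW}. Part (c) comes from $\operatorname{rowspace}[I_m|W]^\perp=\operatorname{rowspace}[-W^T|I_{n-m}]$, and, as in the paper, the identification with the classical Gale transform is deferred to the cited references.
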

\begin{proof}
Item (a) follows from the fact that the minors of $W^T$ are the minors of $W$. Then for (b) we observe that the Pl\"ucker coordinates of $\operatorname{rowspace}[I_m|W]$ are the maximal minors of $[I_m|W]$, i.e.\ the minors of $W$ by Lemma~\ref{lem:minorsIW}; a subspace $\Lambda$ with nonzero coordinate $p_{[m]}$ has a unique representative of the form $[I_m|W]$. Finally, item (c) follows from the fact that $\operatorname{rowspace}[I_m|W]^\perp=\operatorname{rowspace}[-W^T|I_{n-m}]$.
\end{proof}

\begin{proposition}\label{prop:Y2n}
For $n\ge3$ there are isomorphisms $Y_{n-2,n}\cong Y_{2,n}\cong(\C^*)^{n-1}\times\Mo{}_{,n}$, defined over $\Z$.
\end{proposition}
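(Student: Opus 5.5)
The first isomorphism $Y_{n-2,n}\cong Y_{2,n}$ is immediate from Proposition~\ref{prop:gale}(a): transposition identifies $2\times(n-2)$ matrices all of whose minors are nonzero with $(n-2)\times 2$ such matrices. Transposition is defined over $\Z$. So everything reduces to describing $Y_{2,n}$. Write $W\in\C^{2\times(n-2)}$ with columns $w_j=(a_j,b_j)^T$, $1\le j\le n-2$. The minors of $W$ are the entries $a_j,b_j$ and the $2\times2$ minors $a_jb_k-a_kb_j$ for $j<k$. Thus $Y_{2,n}$ is the set of tuples with all $a_j,b_j\ne0$ and $a_jb_k\ne a_kb_j$ for $j\ne k$.

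The plan is to change coordinates via $(a_1,\dots,a_{n-2},b_1,\dots,b_{n-2})\mapsto(a_1,\dots,a_{n-2},b_1,\,z_2,\dots,z_{n-2})$ with $z_j=b_j/a_j\cdot a_1/b_1$. On the locus where all $a_j,b_j$ are nonzero this is a biregular isomorphism of $(\C^*)^{2(n-2)}$ over $\Z$, with inverse $b_j=z_ja_jb_1/a_1$. In these coordinates the conditions become the following:
\begin{itemize}
\item $z_j\ne 0$, which comes from $b_j\ne0$;
\item $z_j\ne 1$, which comes from the minor with columns $1,j$, since $a_1b_j-a_jb_1=a_jb_1(z_j-1)$;
\item $z_j\ne z_k$, which comes from the minor with columns $j,k$, since $a_jb_k-a_kb_j=a_ja_k(b_1/a_1)(z_k-z_j)$.
\end{itemize}
Here the slopes $b_j/a_j$ are just the points $[a_j:b_j]\in\PP^1$, which must be distinct and different from $0,\infty$. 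Hence
\[Y_{2,n}\cong(\C^*)^{n-2}_{a}\times\C^*_{b_1}\times\{(z_2,\dots,z_{n-2})\text{ distinct in }\C\setminus\{0,1\}\}.\]
The last factor is $\Conf_{n-3}(\C\setminus\{0,1\})\cong\Mo{}_{,n}$, by the normalization recalled in the preliminaries: the points $\infty,0,1$ together with $z_2,\dots,z_{n-2}$ give $n$ distinct ordered points. The torus factor has dimension $(n-2)+1=n-1$, as claimed.

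All maps are given by Laurent polynomials with integer coefficients, so the isomorphism is defined over $\Z$. Here $\Mo{}_{,n}$ is understood over $\Z$ as the complement of the diagonals and of $0,1$ in $\mathbb{A}^{n-3}$, which is how the identification with $\Conf_{n-3}$ was stated. The case $n=3$ is degenerate: there are no $z$'s and $\Mo{}_{,3}$ is a point, giving $Y_{2,3}=(\C^*)^2$. This is consistent with the formula.

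The only step requiring care is checking that the coordinate change is invertible over $\Z$ and that the minors factor as stated, with unit prefactors on the torus. This is a direct computation, so I expect no real obstacle.
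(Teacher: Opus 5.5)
Your proposal is correct and is essentially the paper's proof: the paper also passes to the slopes $s_j=b_j/a_j$, uses $a_ib_j-a_jb_i=a_ia_j(s_j-s_i)$ to obtain $(\C^*)^{n-2}\times\Conf_{n-2}(\C^*)$, normalizes by $s_1$ to reach $\C^*\times\Conf_{n-3}(\C\setminus\{0,1\})$, and gets $Y_{n-2,n}\cong Y_{2,n}$ from transposition. Your single substitution $z_j=s_j/s_1$, with $b_1$ kept as the extra torus coordinate in place of $s_1$, simply merges the paper's two steps into one.
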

\begin{proof}
Let $W\in Y_{2,n}$ have columns $(a_j,b_j)^T$, $1\le j\le n-2$. The conditions defining $Y_{2,n}$ are $a_j,b_j\ne0$ and $a_ib_j-a_jb_i=a_ia_j(s_j-s_i)\ne0$, where $s_j=b_j/a_j$. So $W\mapsto\big((a_j)_j,(s_j)_j\big)$ identifies $Y_{2,n}$ with $(\C^*)^{n-2}\times\Conf_{n-2}(\C^*)$. Next, $(s_1,\dots,s_{n-2})\mapsto(s_1;s_2/s_1,\dots,s_{n-2}/s_1)$ identifies $\Conf_{n-2}(\C^*)$ with $\C^*\times\Conf_{n-3}(\C\setminus\{0,1\})=\C^*\times\Mo{}_{,n}$. Finally, $Y_{n-2,n}\cong Y_{2,n}$ by Proposition~\ref{prop:gale}(a).
\end{proof}

\begin{example}[$3\times5$ case]\label{ex:Y35}
Write $W=[w_4|w_5]\in\C^{3\times2}$ with $w_4=(a_1,a_2,a_3)^T$ and $w_5=(a_1r_1,a_2r_2,a_3r_3)^T$. Then $W\in Y_{3,5}$ if and only if $a_i\ne0$, $r_i\ne0$ and $r_i\ne r_j$ for $i\ne j$, since the $2\times2$ minors are $a_ia_j(r_j-r_i)$. Hence $Y_{3,5}\cong(\C^*)^3\times\Conf_3(\C^*)\cong(\C^*)^4\times\Mo{}_{,5}$. Geometrically, $\Gr(3,5)^\circ/(\C^*)^5$ is the moduli space of five points in general position in $\PP^2$, and Gale duality identifies it with $\Mo{}_{,5}$, the moduli of five distinct points on $\PP^1$ (equivalently, on the unique conic through the five points).
\end{example}

\begin{example}[$3\times 6$ case]\label{ex:Y36}
The torus $(\C^*)^3\times(\C^*)^3$ acts on $\C^{3\times3}$ by row and column scaling; the subtorus of pairs $(\lambda\cdot\mathbf 1,\lambda^{-1}\cdot\mathbf 1)$ acts trivially, and the quotient torus $(\C^*)^5$ acts freely on $Y_{3,6}$. Normalizing the first row and column of $W$ to $1$ gives $Y_{3,6}\cong(\C^*)^5\times X(3,6)$, where $X(3,6)$ is the moduli space of six ordered points in general position in $\PP^2$ (the columns of $[I_3|W]$).
\end{example}

\section{Topology of the complements}\label{sec:topology}

\begin{theorem}\label{thm:complement}
For $1\le m<n$ one has:
\begin{enumerate}[label=\textup{(\alph*)},leftmargin=2em]
\item $U_{m,n}\cong\GL_m(\C)\times Y_{m,n}$ and $Y_{m,n}\cong\Gr(m,n)^\circ\cong\Gr(n-m,n)^\circ\cong Y_{n-m,n}$;
\item $\Poin(U_{m,n},t)=\prod_{k=1}^m(1+t^{2k-1})\cdot\Poin(Y_{m,n},t)$. In particular,
\[\frac{\Poin(U_{m,n},t)}{\Poin(\GL_m,t)}=\frac{\Poin(U_{n-m,n},t)}{\Poin(\GL_{n-m},t)};\]
\item $U_{m,n}\simeq\SU(m)\times S^1\times Y_{m,n}$. Hence $\pi_i(U_{m,n})\cong\pi_i(\SU(m))\oplus\pi_i(Y_{m,n})$ for $i\ge2$ and $\pi_1(U_{m,n})\cong\Z\times\pi_1(Y_{m,n})$.
\end{enumerate}
\end{theorem}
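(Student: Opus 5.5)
Part (a) is assembled from results already proved. Proposition~\ref{prop:slice} gives the isomorphism $U_{m,n}\cong\GL_m\times Y_{m,n}$; the hypothesis $n>m$ ensures that $U_{m,n}\subset\Omega=\{\Delta_{[m]}\neq0\}$. Proposition~\ref{prop:gale}(b) identifies $Y_{m,n}$ with $\Gr(m,n)^\circ$. Proposition~\ref{prop:gale}(a) identifies $Y_{m,n}$ with $Y_{n-m,n}$ by transposition. Applying (b) once more, now with $n-m$ in place of $m$, identifies $Y_{n-m,n}$ with $\Gr(n-m,n)^\circ$. If one prefers an intrinsic map, Proposition~\ref{prop:gale}(c) shows that $\Lambda\mapsto\Lambda^\perp$, followed by a coordinate permutation, realizes $\Gr(m,n)^\circ\cong\Gr(n-m,n)^\circ$ directly. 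This permutation preserves the condition that all Pl\"ucker coordinates are nonzero.

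For (b) I will use the K\"unneth formula over a field, which applies because both factors have finitely generated cohomology: $Y_{m,n}$ is a smooth complex affine variety, and $\GL_m(\C)$ is as well. The factor $\Poin(\GL_m,t)=\prod_{k=1}^m(1+t^{2k-1})$ is the classical computation. It comes from the deformation retraction $\GL_m(\C)\simeq\U(m)$ (Gram--Schmidt), together with the fact that $H^*(\U(m);\Q)$ is an exterior algebra on generators of degrees $1,3,\dots,2m-1$. The displayed identity then follows because $Y_{m,n}\cong Y_{n-m,n}$ by (a), so both quotients equal $\Poin(Y_{m,n},t)$.

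For (c), I will use the polar decomposition $\GL_m(\C)\simeq\U(m)$. As a space, $\U(m)$ is homeomorphic to $\SU(m)\times S^1$ via $A\mapsto(A\cdot\operatorname{diag}(\det A^{-1},1,\dots,1),\det A)$. This is a homeomorphism of spaces, not an isomorphism of groups, and that is all the argument needs. Hence $U_{m,n}\simeq\SU(m)\times S^1\times Y_{m,n}$. Homotopy groups of a product are products, so $\pi_i(U_{m,n})\cong\pi_i(\SU(m))\oplus\pi_i(S^1)\oplus\pi_i(Y_{m,n})$. For $i\ge2$ the middle term vanishes. For $i=1$ it is $\Z$, and $\pi_1(\SU(m))=0$ because $\SU(m)$ is simply connected; this can be seen inductively from the fibrations $\SU(m-1)\to\SU(m)\to S^{2m-1}$.

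I expect no real obstacle here. The only care needed is in (a): the various isomorphisms are compatible and defined over the right base, and the indexing in (c) of Proposition~\ref{prop:gale} preserves the open stratum. None of this affects the existence of the isomorphisms.
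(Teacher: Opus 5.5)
Your proposal is correct and follows the paper's proof essentially step by step. Part (a) is assembled from Propositions~\ref{prop:slice} and~\ref{prop:gale}, (b) from the K\"unneth formula together with $\GL_m(\C)\simeq\U(m)$ and the exterior-algebra cohomology of $\U(m)$, and (c) from the same homeomorphism $\U(m)\cong\SU(m)\times S^1$; your extra remarks (finite type for K\"unneth, simple connectivity of $\SU(m)$) only spell out details the paper leaves implicit.
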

\begin{proof}
Item (a) follows from Propositions~\ref{prop:slice} and~\ref{prop:gale}. Item (b) follows from the K\"unneth formula and $H^*(\GL_m(\C);\Z)=H^*(\U(m);\Z)=\Lambda_\Z(e_1,e_3,\dots,e_{2m-1})$. Finally, item (c) holds because $\GL_m(\C)$ deformation retracts onto $\U(m)$, and $\U(m)\cong\SU(m)\times S^1$ as manifolds via
\[A\longmapsto\big(A\operatorname{diag}(\det A^{-1},1,\dots,1),\ \det A\big).\] 
\end{proof}

\begin{corollary}\label{cor:nminus2}
Let $n\ge4$ and $m\in\{2,n-2\}$. Then $U_{m,n}\cong\GL_m(\C)\times(\C^*)^{n-1}\times\Mo{}_{,n}$, and
\begin{enumerate}[label=\textup{(\alph*)},leftmargin=2em]
\item $\displaystyle\Poin(U_{m,n},t)=\prod_{k=1}^m(1+t^{2k-1})\cdot(1+t)^{n-1}\cdot\prod_{k=2}^{n-2}(1+kt)$;
\item $\pi_i(U_{m,n})\cong\pi_i(\SU(m))$ for $i\ge2$; the universal cover of $U_{m,n}$ is homotopy equivalent to $\SU(m)$;
\item $\pi_1(U_{m,n})\cong\Z^{n}\times\pi_1(\Mo{}_{,n})\cong\Z^{n-1}\times \mathfrak{P}_{n-1}$;
\item $|U_{m,n}(\F_q)|=|\GL_m(\F_q)|\,(q-1)^{n-1}\prod_{k=2}^{n-2}(q-k)$.
\end{enumerate}
\end{corollary}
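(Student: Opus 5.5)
The plan is to derive Corollary~\ref{cor:nminus2} from Theorem~\ref{thm:complement} combined with Proposition~\ref{prop:Y2n}. By Theorem~\ref{thm:complement}(a) we have $U_{m,n}\cong\GL_m(\C)\times Y_{m,n}$. For $m\in\{2,n-2\}$, Proposition~\ref{prop:Y2n} gives $Y_{m,n}\cong(\C^*)^{n-1}\times\Mo{}_{,n}$, and both isomorphisms are defined over $\Z$. This gives the product decomposition, and every item is then read off factor by factor. The hypothesis $n\ge4$ only ensures that $m=2<n$ and $m=n-2\ge2$ both make sense (for $n=4$ the two values coincide).

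For (a), Theorem~\ref{thm:complement}(b) supplies the factor $\prod_{k=1}^m(1+t^{2k-1})$. Künneth then gives $\Poin(Y_{m,n},t)=\Poin((\C^*)^{n-1},t)\,\Poin(\Mo{}_{,n},t)$. Here $\Poin((\C^*)^{n-1},t)=(1+t)^{n-1}$, and the Fadell--Neuwirth formula recalled in the preliminaries gives $\Poin(\Mo{}_{,n},t)=\prod_{k=2}^{n-2}(1+kt)$. All these cohomologies are torsion-free, so Künneth applies without Tor terms.

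For (b) and (c), Theorem~\ref{thm:complement}(c) gives $U_{m,n}\simeq\SU(m)\times S^1\times(S^1)^{n-1}\times\Mo{}_{,n}$, since $(\C^*)^{n-1}\simeq T^{n-1}$. Both $T^n$ and $\Mo{}_{,n}$ are $K(\pi,1)$'s by Fadell--Neuwirth. Hence their universal covers are contractible, and for $i\ge2$ we get $\pi_i(U_{m,n})\cong\pi_i(\SU(m))$. Moreover $\SU(m)$ is simply connected, so the universal cover of $U_{m,n}$ is $\SU(m)\times(\text{contractible})\simeq\SU(m)$. For the same reason $\pi_1(U_{m,n})\cong\Z^n\times\pi_1(\Mo{}_{,n})$. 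Using $\mathfrak{P}_{n-1}\cong\Z\times\pi_1(\Mo{}_{,n})$ from the preliminaries, this equals $\Z^{n-1}\times\mathfrak{P}_{n-1}$.

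For (d), I would use that the isomorphisms of Proposition~\ref{prop:slice} and Proposition~\ref{prop:Y2n} are defined over $\Z$, so they induce bijections on $\F_q$-points. This gives $|U_{m,n}(\F_q)|=|\GL_m(\F_q)|\,(q-1)^{n-1}|\Mo{}_{,n}(\F_q)|$, and the preliminaries give $|\Mo{}_{,n}(\F_q)|=\prod_{k=2}^{n-2}(q-k)$. The main point needing care is this last step. One must check that the identification $\Conf_{n-2}(\C^*)\cong\C^*\times\Conf_{n-3}(\C\setminus\{0,1\})$ really is an isomorphism of schemes over $\Z$, with inverse given by multiplication by $s_1$. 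One must also check that the point count of $\Mo{}_{,n}$, namely the count of configurations in $\F_q\setminus\{0,1\}$, is $(q-2)(q-3)\cdots(q-n+2)$, which is immediate by choosing the points one at a time. Everything else is bookkeeping.
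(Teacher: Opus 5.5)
Your proposal is correct and follows the paper's own proof. The paper likewise combines Theorem~\ref{thm:complement} with Proposition~\ref{prop:Y2n}, the recalled facts on $\Mo{}_{,n}$, and the asphericity of $(\C^*)^{n-1}\times\Mo{}_{,n}$; your explicit check that the isomorphisms are defined over $\Z$, which gives the point count in (d), fills in a step the paper leaves implicit.
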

\begin{proof}
Combine Theorem~\ref{thm:complement} with Proposition~\ref{prop:Y2n} and the facts on $\Mo{}_{,n}$ recalled in \S\ref{sec:prelim}. The space $(\C^*)^{n-1}\times\Mo{}_{,n}$ is aspherical.
\end{proof}

\begin{remark}\label{rem:Yimtop}
For $m=2$, Corollary~\ref{cor:nminus2}(a) is \cite[Thm.~4.1]{Yim}: $(1+t)(1+t^3)(1+t)^{n-1}\prod_{k=2}^{n-2}(1+kt)=(1+t^3)(1+t)^{n-1}\prod_{k=1}^{n-2}(1+kt)$. Item (b) is \cite[Thm.~4.4]{Yim} for the complete graph, since $\SU(2)=S^3$. Item (c) answers \cite[Rem.~4.5]{Yim} in this case. Beyond standard facts about $\Mo{}_{,n}$, the proof needs no fibrations or spectral sequences. The fibration obstruction of \cite[Ex.~4.6]{Yim} for $3\times7$ matrices plays no role for the slice, which exists for all $m\times n$; what is not known in general is the topology of $\Gr(m,n)^\circ$.
\end{remark}

\begin{corollary}[$3\times5$ case]\label{cor:35top}
$U_{3,5}\cong\GL_3(\C)\times(\C^*)^4\times\Mo{}_{,5}\simeq\SU(3)\times T^5\times\Mo{}_{,5}$, and
\begin{align*}
\Poin(U_{3,5},t)&=(1+t)^5(1+t^3)(1+t^5)(1+2t)(1+3t)=(1+t^5)\,\Poin(U_{2,5},t)\\
&=1+10t+41t^2+91t^3+125t^4+128t^5+135t^6+162t^7+177t^8+160t^9\\
&\quad+133t^{10}+125t^{11}+121t^{12}+86t^{13}+35t^{14}+6t^{15}.
\end{align*}
Furthermore $\pi_i(U_{3,5})\cong\pi_i(\SU(3))$ for $i\ge2$ \textup{(}so $\pi_2=\pi_4=0$, $\pi_3\cong\pi_5\cong\Z$, $\pi_6\cong\Z/6$, \dots\textup{)}, $\pi_1(U_{3,5})\cong\Z^4\times \mathfrak{P}_4$ and $\chi(U_{3,5})=0$.
\end{corollary}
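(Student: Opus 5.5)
This is the case $m=3$, $n=5$ of Corollary~\ref{cor:nminus2} and Theorem~\ref{thm:complement}, so the plan is to specialize those results and then do some elementary bookkeeping. Since $3=n-2$, Proposition~\ref{prop:Y2n} (or directly Example~\ref{ex:Y35}) gives $Y_{3,5}\cong(\C^*)^4\times\Mo{}_{,5}$. Combined with Proposition~\ref{prop:slice}, this yields $U_{3,5}\cong\GL_3(\C)\times(\C^*)^4\times\Mo{}_{,5}$. For the homotopy statement I would use two facts. First, $\GL_3(\C)\simeq\U(3)\cong\SU(3)\times S^1$, as in the proof of Theorem~\ref{thm:complement}(c). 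Second, $(\C^*)^4\simeq T^4$. Together they give $U_{3,5}\simeq\SU(3)\times T^5\times\Mo{}_{,5}$.

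For the Poincaré polynomial, Corollary~\ref{cor:nminus2}(a) with $m=3$, $n=5$ gives
\[(1+t)(1+t^3)(1+t^5)\cdot(1+t)^4\cdot(1+2t)(1+3t),\]
which equals the stated product. The identity with $(1+t^5)\Poin(U_{2,5},t)$ follows from the same formula with $m=2$, which gives $(1+t)(1+t^3)(1+t)^4(1+2t)(1+3t)$. To get the explicit coefficients, I would expand step by step. First, $(1+2t)(1+3t)=1+5t+6t^2$. Next, $(1+t)^5(1+5t+6t^2)$. Then multiply by $(1+t^3)$, and finally by $(1+t^5)$. As checks, the constant term is $1$, the $t$-coefficient is $5+5=10$, the top coefficient is $6$ in degree $5+2+3+5=15$, and the sum of the coefficients is $P(1)=32\cdot2\cdot2\cdot12=1536$.

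For the homotopy groups, $T^5\times\Mo{}_{,5}$ is aspherical, because $\Mo{}_{,5}$ is a $K(\pi,1)$ by the Fadell--Neuwirth facts recalled in \S\ref{sec:prelim}. Hence $\pi_i(U_{3,5})\cong\pi_i(\SU(3))$ for $i\ge2$. I would quote the standard values $\pi_2=0$, $\pi_3=\Z$, $\pi_4=0$, $\pi_5=\Z$, $\pi_6=\Z/6$; these come from the fibration $\SU(2)\to\SU(3)\to S^5$ and the known computations. For the fundamental group, $\pi_1=\Z^5\times\pi_1(\Mo{}_{,5})$. Using $\mathfrak{P}_4\cong\Z\times\pi_1(\Mo{}_{,5})$ from \S\ref{sec:prelim}, this becomes $\Z^4\times\mathfrak{P}_4$, matching Corollary~\ref{cor:nminus2}(c). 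Finally, $\chi(U_{3,5})=\Poin(U_{3,5},-1)=0$, because of the factor $(1+t)^5$; alternatively, $U_{3,5}$ has a torus factor.

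There is no real obstacle in this plan. The only step needing care is the polynomial expansion, which is routine arithmetic, plus citing the values of $\pi_i(\SU(3))$, which are classical and not derived here.
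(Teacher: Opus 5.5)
Your proposal is correct and follows the paper's route: the corollary is simply the case $m=3$, $n=5$ of Corollary~\ref{cor:nminus2}, obtained from Theorem~\ref{thm:complement} and Proposition~\ref{prop:Y2n}, and you handle $\pi_1$ via $\mathfrak{P}_4\cong\Z\times\pi_1(\Mo{}_{,5})$ and $\chi=\Poin(U_{3,5},-1)=0$ as intended. Your expansion also checks out: the intermediate product is $\Poin(U_{2,5},t)=1+10t+41t^2+91t^3+125t^4+127t^5+125t^6+121t^7+86t^8+35t^9+6t^{10}$, and multiplying it by $1+t^5$ gives exactly the stated coefficients.
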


\subsection{The ten-plane arrangement}
Let us recall the following well-known construction.
\begin{proposition}\label{prop:tenplanes}
Let $P_1,\dots,P_5$ be vectors in $K^3$, $K$ is a field, any three of which are linearly independent. Let $\cA=\cA(P_1,\dots,P_5)$ be the central arrangement of the ten planes $H_{ij}=\operatorname{span}(P_i,P_j)$, $1\le i<j\le5$. Then
\begin{enumerate}[label=\textup{(\alph*)},leftmargin=2em]
\item the planes $H_{ij}$ are pairwise distinct;
\item the rank-two flats of $\cA$ are the five lines $KP_i$, each contained in exactly the four planes $H_{ij}$ \textup{(}$j\ne i$\textup{)}, and the fifteen lines $H_{ij}\cap H_{kl}$ with $\{i,j\}\cap\{k,l\}=\emptyset$, each contained in exactly two planes;
\item $\chi_\cA(t)=t^3-10t^2+30t-21=(t-1)(t^2-9t+21)$, and for $K=\C$, $\Poin(M(\cA),t)=(1+t)(1+9t+21t^2)$;
\item $\cA$ is not free.
\end{enumerate}
\end{proposition}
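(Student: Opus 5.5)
The plan is to compute the intersection lattice of $\cA$ directly from the general-position hypothesis, read off $\chi_\cA$, and then apply Terao's factorization theorem (Theorem~\ref{thm:terao}) to rule out freeness.

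\emph{Parts (a) and (b).} For (a), if $H_{ij}=H_{kl}$ with $\{i,j\}\ne\{k,l\}$, then the union $\{i,j,k,l\}$ has at least three elements. The corresponding $P$'s would span a space of dimension at most $2$, which contradicts the assumption that any three are independent.

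For (b), I would study a line $\ell=H\cap H'$ with $H\ne H'$. There are two cases.
\begin{itemize}
\item If the two index pairs share an index $i$, then $\ell\supseteq KP_i$, so $\ell=KP_i$. The plane $H_{kl}$ contains $P_i$ exactly when $i\in\{k,l\}$, since otherwise $P_i,P_k,P_l$ would be dependent. Hence $KP_i$ lies in exactly the four planes $H_{ij}$.
\item If the pairs are disjoint, say $\{i,j\}$ and $\{k,l\}$, I need $\ell$ to lie in no third plane. Suppose $\ell\subset H_{ab}$. If $\{a,b\}$ meets $\{i,j\}$, then by the first case $\ell=KP_a$. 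But $P_a\in H_{kl}$ forces $a\in\{k,l\}$, which is disjoint from $\{i,j\}$, a contradiction. If $\{a,b\}$ is disjoint from both pairs, it would have to lie in a $1$-element set, which is impossible.
\end{itemize}
So each disjoint-pair line lies in exactly two planes. There are $5\cdot4\cdot3\cdot2/(2\cdot2\cdot2)=15$ such lines, all distinct. None of them equals a line $KP_i$, because $KP_i$ lies in four planes.

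\emph{Part (c).} With the lattice known, the Möbius function follows. Each plane has $\mu=-1$. A rank-two flat contained in $k$ planes has $\mu=k-1$. This gives
\[\sum\mu(\text{rank two})=5\cdot3+15\cdot1=30.\]
For the origin, $\sum_{X}\mu(X)=0$ gives $\mu(0)=-(1-10+30)=-21$. Therefore
\[\chi_\cA(t)=t^3-10t^2+30t-21,\]
and one checks that this equals $(t-1)(t^2-9t+21)$. For $K=\C$, the Poincaré polynomial formula recalled in \S\ref{sec:prelim} gives
\[(-t)^3\chi_\cA(-1/t)=(1+t)(1+9t+21t^2).\]

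\emph{Part (d).} Suppose $\cA$ were free with integer exponents $d_1,d_2,d_3$. Then $\chi_\cA$ would split over $\Z$ by Theorem~\ref{thm:terao}. But $t^2-9t+21$ has discriminant $81-84=-3<0$, so it has no real roots, and $\cA$ is not free.

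Terao's theorem is stated over $\C$. For a general field $K$ I would use the characteristic-free version of Terao's factorization (as in \cite{OT}). Alternatively, one can restrict the statement to $K=\C$, which is the case used later.

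There is no deep obstacle here. The main care point is making the lattice count in (b) airtight, namely showing there are no accidental triple points among the disjoint-pair lines. That is exactly where the general-position argument above is needed.
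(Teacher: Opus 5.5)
Your proposal is correct and follows essentially the same route as the paper. You use the same general-position argument for (a) and for the two types of rank-two flats in (b), the same M\"obius count $5\cdot3+15\cdot1=30$ with the constant term fixed by the M\"obius recursion (equivalently $\chi_\cA(1)=0$), and the same appeal to Terao's factorization theorem together with the discriminant $-3$ for (d).
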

\begin{proof}
(a) If $H_{ij}=H_{kl}$ with $\{i,j\}\ne\{k,l\}$, three of the $P$'s would lie in a plane. (b) Two planes sharing an index meet in $KP_i$, since $P_i,P_j,P_k$ are independent. $H_{ab}\supset KP_i$ forces $i\in\{a,b\}$. Let $\ell=H_{ij}\cap H_{kl}$ with $\{i,j\}\cap\{k,l\}=\emptyset$. If $\ell\subset H_{ab}$ for a third pair, then $\{a,b\}$ meets $\{i,j\}\cup\{k,l\}$ (only one index is left over), say $a=i$ and $b\ne j$. Then $\ell\subset H_{ij}\cap H_{ib}=KP_i$, so $P_i\in H_{kl}$, a contradiction. The same argument shows that $\ell$ is not one of the lines $KP_s$, and that the fifteen lines are distinct. (c) Hence $\sum_{\rk X=2}\mu(X)=5\cdot3+15\cdot1=30$, and $\chi_\cA(1)=0$ determines the constant term. (d) If $\cA$ were free, Theorem~\ref{thm:terao} would force the roots of $t^2-9t+21$, whose discriminant is $-3$, to be integers.
\end{proof}

\begin{proposition}[$3\times6$ case]\label{prop:36}
The projection $\pi\colon U_{3,6}\to U_{3,5}$ forgetting $v_6$ is a locally trivial fibration with fibre $M(\cA(v_1,\dots,v_5))$, and
\[\Poin(U_{3,6},t)=(1+t)^6(1+t^3)(1+t^5)(1+2t)(1+3t)(1+9t+21t^2).\]
Equivalently, $\Poin(X(3,6),t)=(1+2t)(1+3t)(1+9t+21t^2)$.
\end{proposition}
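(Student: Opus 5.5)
The plan is to apply Lemma~\ref{lem:isotopy} with base $B=U_{3,5}$, $\ell=3$ and $N=\binom{5}{2}=10$. For $b=(v_1,\dots,v_5)\in U_{3,5}$, the linear forms will be
\[\alpha_{ij}(b)(x)=[v_i,v_j,x],\qquad 1\le i<j\le5.\]
Each $\alpha_{ij}$ is polynomial in $b$ and nonzero, because $v_i,v_j$ are independent. Its kernel is $H_{ij}=\operatorname{span}(v_i,v_j)$. The point $(b,x)$ lies in $\mathcal M$ exactly when $[v_i,v_j,x]\ne0$ for all $i<j$. Together with $b\in U_{3,5}$, this says every three of $v_1,\dots,v_5,x$ are independent, i.e.\ $(b,x)\in U_{3,6}$ with $x=v_6$. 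So $\mathcal M=U_{3,6}$, the map $\mathrm{pr}_1$ is $\pi$, and the fibre is $M(\cA(v_1,\dots,v_5))$.

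The hypothesis to verify is that the rank function $r(S)$ is independent of $b$. I will use Proposition~\ref{prop:tenplanes}, whose proof only uses that any three of the $v_i$ are independent, which holds for every $b\in U_{3,5}$. By part (a), $r(S)=1$ exactly when $|S|=1$. By part (b), $r(S)=2$ exactly when $S\ne\emptyset$ and the planes in $S$ share a common line. This happens when either all pairs in $S$ contain a fixed index $i$, or $S\subset\{\{i,j\},\{k,l\}\}$ with the two pairs disjoint. Otherwise $r(S)=3$, since the arrangement is essential. This description is purely combinatorial, so $r$ is constant and Lemma~\ref{lem:isotopy} gives local triviality. I expect this matroid-constancy check to be the main point needing care, but Proposition~\ref{prop:tenplanes}(b) essentially provides it.

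For the Poincaré polynomial, the plan is to use the Leray--Hirsch theorem. The fibre's cohomology is the Orlik--Solomon algebra, generated by the classes $d\log\alpha_{ij}(b)$. These are restrictions of the global classes $d\log[v_i,v_j,v_6]$ on $U_{3,6}$. Hence $H^*(U_{3,6})\to H^*(\text{fibre})$ is surjective, and the Serre spectral sequence degenerates, possibly after first arguing that the local system is trivial. Then
\[\Poin(U_{3,6},t)=\Poin(U_{3,5},t)\cdot(1+t)(1+9t+21t^2),\]
and Corollary~\ref{cor:35top} supplies the first factor. This surjectivity yields a cohomology extension of the fibre, so Leray--Hirsch applies over $\Q$, or over $\Z$ since the cohomology is torsion-free.

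Finally, by Theorem~\ref{thm:complement}(b) and Example~\ref{ex:Y36}, I will divide by $\Poin(\GL_3,t)=(1+t)(1+t^3)(1+t^5)$ and by $(1+t)^5$ for the free torus factor. This leaves $\Poin(X(3,6),t)=(1+2t)(1+3t)(1+9t+21t^2)$.
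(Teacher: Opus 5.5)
Your proposal is correct and follows the paper's proof essentially step for step: the same forms $\alpha_{ij}(b)(x)=[v_i,v_j,x]$, the same constant-rank check via Proposition~\ref{prop:tenplanes}(a),(b) feeding Lemma~\ref{lem:isotopy}, Leray--Hirsch with the classes $d\log\Delta_{ij6}$, and the same final division by $\Poin(\GL_3,t)(1+t)^5$. The one step the paper makes explicit and you leave implicit is that a fixed set of monomials in these classes which restricts to a basis on one fibre restricts to a basis on every fibre. This follows from local triviality and the connectedness of $U_{3,5}$ (an open-and-closed argument), so no separate argument about triviality of the local system is needed.
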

\begin{proof}
For $b=(v_1,\dots,v_5)\in U_{3,5}$ and $x\in\C^3$ put $\alpha_{ij}(b)(x)=[v_i,v_j,x]$, $1\le i<j\le5$. The maximal minors of $(v_1,\dots,v_5,x)$ are those of $b$ together with the $\Delta_{ij6}=\alpha_{ij}(b)(x)$. Hence
\[U_{3,6}=\{(b,x)\in U_{3,5}\times\C^3\mid \alpha_{ij}(b)(x)\ne0\ \text{for all } i<j\le5\},\qquad \pi=\mathrm{pr}_1.\]
Since $\ker\alpha_{ij}(b)=\operatorname{span}(v_i,v_j)=H_{ij}$, the vector $v_6$ must avoid exactly the planes $\operatorname{span}(v_i,v_j)$, $i<j\le5$. So $\pi^{-1}(v_1,\dots,v_5)=M(\cA(v_1,\dots,v_5))$.

\emph{Local triviality.} We apply Lemma~\ref{lem:isotopy} with $B=U_{3,5}$, $\ell=3$ and the ten forms $\alpha_{ij}$. These are polynomial in $b$, and they are nonzero because $v_i$ and $v_j$ are independent. It remains to show that the rank function
\[r(S)=\operatorname{codim}\textstyle\bigcap_{\{i,j\}\in S}H_{ij},\qquad S\subset\binom{[5]}{2},\]
does not depend on $b$. Any three of $v_1,\dots,v_5$ are linearly independent, so Proposition~\ref{prop:tenplanes} applies with $P_k=v_k$. By part (a), $r(S)=|S|$ if $|S|\le2$. Now let $|S|\ge3$. Then $r(S)\in\{2,3\}$, and $r(S)=2$ if and only if the planes in $S$ contain a common line. This line is then a rank-two flat lying on at least three planes of $\cA$. By part (b), this happens if and only if all pairs in $S$ share a common index $i$, the line being $\C v_i$. Thus $r(S)$ depends only on $S$. Lemma~\ref{lem:isotopy} now shows that $\pi$ is a locally trivial fibration with fibre $M(\cA(v_1,\dots,v_5))$.

\emph{Cohomology.} The classes $\omega_{ij}=[d\log\Delta_{ij6}]\in H^1(U_{3,6};\C)$ restrict on every fibre to the Orlik--Solomon generators $[d\log\alpha_{H_{ij}}]$. Since the Orlik--Solomon algebra is generated in degree one, we can choose monomials $c_1,\dots,c_s$ in the $\omega_{ij}$ whose restrictions form a basis of $H^*(\pi^{-1}(b_0);\C)$ for one fixed $b_0$. Consider the set of $b$ for which the restrictions of $c_1,\dots,c_s$ form a basis of $H^*(\pi^{-1}(b);\C)$. This set is open and closed. Indeed, let $V$ be a ball over which $\pi$ is trivial. Then the restriction maps $H^*(\pi^{-1}(V))\to H^*(\pi^{-1}(b'))$ are isomorphisms for all $b'\in V$. As $U_{3,5}$ is connected, the $c_k$ restrict to a basis on every fibre. The Leray--Hirsch theorem then gives $\Poin(U_{3,6})=\Poin(U_{3,5})\cdot\Poin(M(\cA))$. Now use Corollary~\ref{cor:35top} and Proposition~\ref{prop:tenplanes}(c). The formula for $X(3,6)$ follows from Theorem~\ref{thm:complement} and Example~\ref{ex:Y36}.
\end{proof}

\begin{remark}
As a check, the coefficient of $t$ is $6+2+3+9=20=\binom63$, the number of components of $D_{3,6}$. Proposition~\ref{prop:36} shows that the Poincar\'e polynomial of $U_{m,n}$ does not in general factor as $\Poin(\GL_m)$ times linear factors over $\Q$. In contrast, for $(n-2)\times n$ and $2\times n$ it does, regardless of freeness.
\end{remark}

\section{Local structure of \texorpdfstring{$D_{m,n}$}{Dmn}}\label{sec:local}

\subsection{Points of full rank: local Gale duality}

\begin{proposition}[local Gale duality]\label{prop:localGale}
Let $p\in\C^{m\times n}$ have rank $m$, and choose $I$ with $\Delta_I(p)\ne0$ -- we may assume that after permuting columns $I=[m]$. Write $p=\Phi(g_0,W_0)$ as in Proposition~\ref{prop:slice}, and let $q=[I_{n-m}\,|\,W_0^T]\in\C^{(n-m)\times n}$, a point of rank $n-m$. Then there are isomorphisms of germs
\[(\C^{m\times n},D_{m,n},p)\cong(\C^{m^2},\emptyset,g_0)\times(\C^{m(n-m)},E_{m,n},W_0),\]
\[(\C^{(n-m)\times n},D_{n-m,n},q)\cong(\C^{(n-m)^2},\emptyset,\mathrm{id})\times(\C^{m(n-m)},E_{n-m,n},W_0^T),\]
and $(E_{m,n},W_0)\cong(E_{n-m,n},W_0^T)$ via transposition. In particular $D_{m,n}$ is free at $p$ if and only if $D_{n-m,n}$ is free at $q$. Consequently, $D_{m,n}$ is free at all points of rank $m$ if and only if $D_{n-m,n}$ is free at all points of rank $n-m$.
\end{proposition}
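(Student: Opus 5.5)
The plan is to read off both germ isomorphisms from Proposition~\ref{prop:slice}, applied once for $m$ and once for $n-m$, and to connect them by the transposition of Proposition~\ref{prop:gale}(a).

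First I reduce to $I=[m]$. The permutation of columns is a linear automorphism of $\C^{m\times n}$ preserving $D_{m,n}$, since $S_n$ permutes the components. Since $p$ has rank $m$, some $\Delta_I(p)\neq0$, so after permuting, $p\in\Omega=\{\Delta_{[m]}\ne0\}$. Proposition~\ref{prop:slice} gives an isomorphism of varieties $\Phi\colon\GL_m\times\C^{m\times(n-m)}\to\Omega$. We have $\Phi^*f_{m,n}=\pm\det(g)^{\binom nm}\mu_{m,n}(W)$, and $\det g$ is a unit on $\GL_m$. Hence $\Phi^{-1}(D_{m,n}\cap\Omega)=\GL_m\times E_{m,n}$ as reduced divisors: the defining equations differ by a unit. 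Since $\Omega$ is Zariski open in $\C^{m\times n}$ and $\GL_m$ is open in $\C^{m^2}$, taking germs at $p=\Phi(g_0,W_0)$ gives the first isomorphism. In this product decomposition the divisor lives only on the second factor.

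For the second germ isomorphism, $q=[I_{n-m}\,|\,W_0^T]$ has $\Delta_{[n-m]}(q)=1\ne0$. So $q=\Phi'(\mathrm{id},W_0^T)$, where $\Phi'$ is the slice map of Proposition~\ref{prop:slice} for $(n-m)\times n$ matrices. The same argument gives the second isomorphism of germs. The isomorphism $(E_{m,n},W_0)\cong(E_{n-m,n},W_0^T)$ is the germ at $W_0$ of the global linear isomorphism of Proposition~\ref{prop:gale}(a). There is one point to check: transposition carries the reduced polynomial $\mu_{m,n}$ to $\mu_{n-m,n}$ up to sign. Indeed, minors of $W^T$ are minors of $W$, and the index sets $(P,Q)\leftrightarrow(Q,P)$ correspond bijectively. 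So the divisors, not just their supports, correspond.

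For the freeness statement, I apply Lemma~\ref{lem:standard}(d) twice. $D_{m,n}$ is free at $p$ iff $E_{m,n}$ is free at $W_0$. This is iff $E_{n-m,n}$ is free at $W_0^T$, since freeness is invariant under a linear change of coordinates preserving the divisor. That in turn is iff $D_{n-m,n}$ is free at $q$. For the last sentence, suppose $D_{n-m,n}$ is free at all points of rank $n-m$, and let $p$ have rank $m$. Then $q$ has rank $n-m$, so $D_{m,n}$ is free at $p$. The converse is the same argument with the roles of $m$ and $n-m$ swapped. Here every point of rank $n-m$ arises as some $q$ after a column permutation and a $\GL_{n-m}$-translation, and the slice handles the $\GL$ factor.

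The only delicate step is the bookkeeping that the germ isomorphism respects the reduced divisor structure, which needs the unit factor $\det(g)^{\binom nm}$. I also need the column permutation to be compatible with the statement's normalization ``we may assume $I=[m]$''. Everything else is formal.
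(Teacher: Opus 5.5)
Your proposal is correct and takes the same route as the paper. You apply Proposition~\ref{prop:slice} for both $m$ and $n-m$, identify $(E_{m,n},W_0)$ with $(E_{n-m,n},W_0^T)$ by transposition (Proposition~\ref{prop:gale}(a)), conclude with Lemma~\ref{lem:standard}(d), and for the last sentence use that every rank-$(n-m)$ point is, after a column permutation, of the form $\Phi(g,W_0^T)$. Your remarks on the unit factor $\det(g)^{\binom nm}$ and on $\mu_{n-m,n}(W^T)=\mu_{m,n}(W)$ only spell out what the paper leaves implicit.
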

\begin{proof}
Apply Proposition~\ref{prop:slice} to both $D_{m,n}$ and $D_{n-m,n}$, then Proposition~\ref{prop:gale}(a) and Lemma~\ref{lem:standard}(d). Every $W_0\in\C^{m\times(n-m)}$ arises from some point of rank $m$, and every point of rank $n-m$ of $\C^{(n-m)\times n}$ is, after permuting columns, of the form $\Phi(g,W_0^T)$.
\end{proof}

\begin{corollary}\label{cor:fullrankfree}
\begin{enumerate}[label=\textup{(\alph*)},leftmargin=2em]
\item $D_{n-1,n}$ is free at all points of rank $n-1$, being locally a normal crossing divisor there \textup{(}Gale dual to $D_{1,n}$\textup{)}.
\item $D_{n-2,n}$ is free at all points of rank $n-2$, because $D_{2,n}$ is free \cite[Theorem~3.3]{Yim} -- see also Appendix~\ref{app:2n}. In particular, $D_{3,5}$ is free at every matrix of rank $3$, and $D_{4,6}$ at every matrix of rank $4$.
\end{enumerate}
\end{corollary}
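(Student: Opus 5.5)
We prove the two items separately; both reduce to Proposition~\ref{prop:localGale} applied with $n-m=1$ or $n-m=2$, together with Lemma~\ref{lem:standard}(c),(d).

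For (a), take $p$ of rank $n-1$ in $\C^{(n-1)\times n}$. After permuting columns we may assume $\Delta_{[n-1]}(p)\ne0$, and then write $p=\Phi(g_0,W_0)$ with $W_0\in\C^{(n-1)\times 1}$. By Proposition~\ref{prop:localGale}, the germ $(D_{n-1,n},p)$ is, up to the smooth factor $\C^{(n-1)^2}$, the germ $(E_{n-1,n},W_0)$, and transposition identifies this with $(E_{1,n},W_0^T)$. By definition, $\mu_{1,n}$ is the product of the $1\times1$ minors of a row vector $w\in\C^{1\times(n-1)}$, so $\mu_{1,n}(w)=w_1\cdots w_{n-1}$. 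Equivalently, we can read this off Lemma~\ref{lem:minorsIW} directly: $\mu_{n-1,n}(W)$ is the product of the entries of the column $W$, because every larger minor is empty and the minors of size $0$ are excluded. Thus $E_{n-1,n}$ is the coordinate-hyperplane normal crossing divisor, which is free at every point: the fields $w_j\partial_{w_j}$ form a basis, and Saito's criterion (Lemma~\ref{lem:standard}(a)) applies. Lemma~\ref{lem:standard}(d) then transfers freeness back to $p$.

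For (b), let $p\in\C^{(n-2)\times n}$ have rank $n-2$. Proposition~\ref{prop:localGale} gives a point $q=[I_2\,|\,W_0^T]\in\C^{2\times n}$ of rank $2$ such that $D_{n-2,n}$ is free at $p$ if and only if $D_{2,n}$ is free at $q$. Since $D_{2,n}$ is free, by \cite[Thm.~3.3]{Yim} or Appendix~\ref{app:2n}, its module $\Der(-\log f_{2,n})$ is a free graded module. Lemma~\ref{lem:standard}(c) then says $D_{2,n}$ is free at every point, in particular at $q$. Taking $n=5$ and $n=6$ gives the statements for $D_{3,5}$ and $D_{4,6}$.

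The only point needing care is the input freeness of $D_{2,n}$ in (b). If one wants the argument self-contained, that is exactly where the work lies. The plan would be to verify Saito's criterion in the form of Lemma~\ref{lem:standard}(b), using the $n+3$ linear fields from $\cL_{2,n}$ (Lemma~\ref{lem:linear} gives $4+n-1=n+3$ independent fields) together with the $n-3$ fields $\psi_{i\to k}$ of degree $n-1$ from Lemma~\ref{lem:psi}. Their degrees sum to $(n+3)+(n-3)(n-1)$. We then need this to equal $\deg f_{2,n}=2\binom n2=n(n-1)$, and this bookkeeping has to be checked, given the paper's stated degree $n+1$. We also need the determinant of the resulting matrix to be nonzero. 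Since this freeness is cited, the corollary itself is a direct assembly of earlier results.
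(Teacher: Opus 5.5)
Your argument is correct and is the paper's own route: local Gale duality (Proposition~\ref{prop:localGale}) reduces freeness at full-rank points of $D_{n-1,n}$ and $D_{n-2,n}$ to freeness of $E_{1,n}=V(w_1\cdots w_{n-1})$ and of $D_{2,n}$. That freeness is then transported back by Lemma~\ref{lem:standard}(d).

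One correction concerns your optional self-contained aside, which would not work as planned. By Lemma~\ref{lem:psi}, for $m=2$ the fields $\psi_{i\to k}$ have degree $2(n-2)+1=2n-3$, not $n-1$. With that degree, the sum $(n+3)+(n-3)(2n-3)$ exceeds $n(n-1)$ by $(n-3)(n-4)$. So these fields cannot form a basis for $n\ge5$; they suffice only for $n=4$. This is why Appendix~\ref{app:2n} uses instead the coefficients of the binary form $\varphi_u$, which have degree $n+1$.
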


So for $(n-2)\times n$ matrices, any failure of freeness is concentrated on the determinantal variety of matrices of rank $\le n-3$.

\subsection{Points of rank $m-1$: a chart}

\begin{proposition}\label{prop:chart}
Let $\Omega'\subset\C^{m\times n}$ be the open set where the leading $(m-1)\times(m-1)$ minor $\det(x_{rk})_{r,k<m}$ is nonzero. Every $M\in\Omega'$ can be written uniquely as
\[M=\begin{pmatrix}B&0\\ r&1\end{pmatrix}\cdot N(Z),\qquad N(Z)=\begin{pmatrix}I_{m-1}&X\\0&z\end{pmatrix},\qquad Z=\begin{pmatrix}X\\ z\end{pmatrix}\in\C^{m\times(n-m+1)},\]
with $B\in\GL_{m-1}$ and $r\in\C^{1\times(m-1)}$. This gives an isomorphism $\Omega'\cong\GL_{m-1}\times\C^{m-1}\times\C^{m\times(n-m+1)}$. Moreover, $\Delta_I(M)=\det(B)\,\Delta_I(N(Z))$, and the maximal minors of $N(Z)$ are, up to sign, exactly the minors of $Z$ \textup{(}of all sizes $\ge1$\textup{)} whose row set contains the last row $m$, each occurring once. Hence
\[D_{m,n}\cap\Omega'\cong\GL_{m-1}\times\C^{m-1}\times E'_{m,n},\qquad E'_{m,n}=V(f'_{m,n}),\quad f'_{m,n}=\prod_{\substack{P\ni m,\ |P|=|Q|}}\det Z_{P,Q}.\]
A point of $\Omega'$ has rank $m-1$ if and only if $z=0$. Every point of $\C^{m\times n}$ of rank $\ge m-1$ can be moved into $\Omega'$ by an element of $S_n\times\GL_m$, which preserves $D_{m,n}$.
\end{proposition}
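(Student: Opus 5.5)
The plan mirrors the proof of Proposition~\ref{prop:slice}, using a block factorization in place of $g[I_m|W]$. \emph{Existence and uniqueness.} Write $M\in\Omega'$ with columns split as $(m-1)+(n-m+1)$ and rows as $(m-1)+1$:
\[M=\begin{pmatrix}A&C\\ a&c\end{pmatrix},\qquad A\in\GL_{m-1}.\]
Multiplying out the proposed factorization gives $\begin{pmatrix}B&BX\\ r&rX+z\end{pmatrix}$. Matching blocks forces $B=A$, $X=A^{-1}C$, $r=a$ and $z=c-aA^{-1}C$. Each of these is a regular function on $\Omega'$, since $A^{-1}$ is regular where $\det A\ne0$. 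Conversely, any $(B,r,Z)$ produces a matrix whose leading block is $B$, hence a point of $\Omega'$. So the two maps are mutually inverse morphisms, and $\Omega'\cong\GL_{m-1}\times\C^{m-1}\times\C^{m\times(n-m+1)}$.

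\emph{Minors.} The left factor $\begin{pmatrix}B&0\\ r&1\end{pmatrix}$ is block lower triangular with determinant $\det B$. Multiplicativity of determinants applied to $m\times m$ column submatrices then gives $\Delta_I(M)=\det(B)\Delta_I(N(Z))$.

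For the minors of $N(Z)$ I would argue as in Lemma~\ref{lem:minorsIW}. Write $I=I_1\sqcup((m-1)+I_2)$ with $I_1\subset[m-1]$. Expand $\det N(Z)_I$ along the unit columns $e_i$, $i\in I_1$. What remains is $\pm\det Z_{P,I_2}$ with $P=[m]\setminus I_1$, and $P$ always contains $m$. Counting sizes, $|P|=m-|I_1|=|I_2|$, and $|P|\ge1$ automatically.

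The assignment $I\mapsto(P,Q)=([m]\setminus I_1,I_2)$ is injective. It is also surjective onto the pairs with $m\in P$ and $|P|=|Q|\ge1$: given such a pair, set $I_1=[m-1]\setminus P$, and then $|I_1|+|Q|=m$. This yields $f_{m,n}=\pm\det(B)^{\binom nm}f'_{m,n}(Z)$, and hence the description of $D_{m,n}\cap\Omega'$.

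\emph{Rank and moving into the chart.} Since the left factor is invertible, $\rk M=\rk N(Z)$. The first $m-1$ columns of $N(Z)$ already have rank $m-1$. The rank is exactly $m-1$ iff every remaining column lies in the span of $e_1,\dots,e_{m-1}$, i.e.\ iff the last row $z$ vanishes.

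Now take a point $p$ of rank $\ge m-1$. Choose $m-1$ independent columns and move them to the front with an element of $S_n$. The resulting $m\times(m-1)$ block has rank $m-1$, so some $(m-1)\times(m-1)$ row minor of it is nonzero. A permutation matrix in $\GL_m$ moves those rows to the top. Both actions preserve $D_{m,n}$: $S_n$ permutes the minors, and $\GL_m$ rescales each minor by $\det g$.

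None of these steps is a real obstacle. The only care needed is in the bookkeeping of the bijection $I\leftrightarrow(P,Q)$ and its count, and in checking that no minor is lost or duplicated. The signs can be ignored, because the statement only claims equality up to sign.
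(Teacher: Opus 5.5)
Your proposal is correct and takes essentially the same route as the paper. The paper obtains existence and uniqueness by applying the inverse of the lower-triangular factor to $M$ rather than multiplying out blocks, which is the same computation. Your bijection $I\leftrightarrow(P,Q)$ (expanding along the unit columns as in Lemma~\ref{lem:minorsIW}), the rank criterion via $z=0$, and the step of moving a point into $\Omega'$ by a column permutation followed by a row permutation all coincide with the paper's argument.
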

\begin{proof}
Write $M=\binom{A}{y}$ with $A\in\C^{(m-1)\times n}$ and $B=A_{[m-1]}$. Then
\[\begin{pmatrix}B&0\\ r&1\end{pmatrix}^{-1}M=\begin{pmatrix}B^{-1}A\\ y-rB^{-1}A\end{pmatrix},\]
and the first $m-1$ entries of the last row vanish exactly when $r=y_{[m-1]}$. This proves existence and uniqueness, and the inverse map is visibly algebraic. For the minors, expand along the identity columns as in Lemma~\ref{lem:minorsIW}: for $I=I_1\sqcup((m-1)+I_2)$ one gets $\pm\det Z_{([m]\setminus I_1),I_2}$, and $[m]\setminus I_1\ni m$. Conversely each pair $(P,Q)$ with $m\in P$, $|P|=|Q|\ge1$ arises from exactly one $I$. The rank statement holds because $\rk N(Z)=m-1+\rk z$. For the last claim, move $m-1$ independent columns to the front and then apply a row operation.
\end{proof}

\begin{example}[the $3\times5$ chart]\label{ex:chart35}
For $m=3$, $n=5$ we have $Z=\begin{pmatrix}a\\ b\\ c\end{pmatrix}\in\C^{3\times3}$, rows $a,b,c$ indexed by the columns $3,4,5$, and
\[f'_{3,5}=\prod_{k}c_k\cdot\prod_{j<k}(a_jc_k-a_kc_j)(b_jc_k-b_kc_j)\cdot\det Z,\]
a product of ten forms with $\deg f'_{3,5}=18$ in the $9$ variables $a,b,c$. The point $M_0=(e_1,e_2,0,0,0)$ corresponds to $Z=0$. The seven linear logarithmic fields of $f'_{3,5}$ are spanned by the row and column scalings ($3+3-1=5$ of them) and the two shears $a\mapsto a+\lambda c$ and $b\mapsto b+\lambda c$.
\end{example}

\subsection{Transversal arrangements at rank-two points of $D_{3,5}$}

\begin{proposition}\label{prop:transversal}
In the chart of Example~\ref{ex:chart35}, fix $X=\binom ab$. Then $f'_{3,5}(X,c)$ is the product of the ten linear forms in $c=(c_3,c_4,c_5)$ displayed in Example~\ref{ex:chart35}, and if $X\in Y_{2,5}$ these forms are nonzero and
\[f'_{3,5}(X,c)=\pm\,Q_{\cA(P_1,\dots,P_5)}(c),\qquad P_1=a,\ P_2=b,\ P_k=e_k\ (k=3,4,5),\]
where the minor $\Delta_I$ corresponds to the plane $H_{jk}=\operatorname{span}(P_j,P_k)$ with $\{j,k\}=[5]\setminus I$. The vectors $P_1,\dots,P_5$ are in general position if and only if the five columns $e_1,e_2,X_3,X_4,X_5$ of $[I_2\,|\,X]$ are pairwise independent, i.e.\ if and only if $X\in Y_{2,5}$. In that case $(P_1,\dots,P_5)$ is \textup{(}up to signs\textup{)} the Gale transform of the rank-two configuration $(e_1,e_2,X_3,X_4,X_5)$.
\end{proposition}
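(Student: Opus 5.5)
We plan a direct computation in the chart of Example~\ref{ex:chart35}. The first step is to identify each factor of $f'_{3,5}$ as a linear form in $c$ and match it with a plane $H_{jk}$ by exhibiting its kernel. Here $Z$ has columns indexed by $3,4,5$; write $a=(a_3,a_4,a_5)$, and similarly $b$ and $c$.

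The factors group as follows.
\begin{itemize}[leftmargin=2em]
\item The factor $c_k$ is the form $[e_i,e_j,c]$ with $\{i,j,k\}=\{3,4,5\}$. Its kernel is $\operatorname{span}(e_i,e_j)=H_{ij}$.
\item The factor $a_jc_k-a_kc_j$ vanishes on $e_i$ for $i\ne j,k$ and on $a$. Hence it is, up to sign, $[P_1,e_i,c]$, with kernel $H_{1i}$.
\item Likewise $b_jc_k-b_kc_j=\pm[P_2,e_i,c]$, with kernel $H_{2i}$.
\item The factor $\det Z=[a,b,c]$ up to transposition, with kernel $H_{12}$.
\end{itemize}
This gives $3+3+3+1=10$ forms, one for each pair $\{j,k\}\subset[5]$, so $f'_{3,5}(X,c)=\pm\prod_{j<k}[P_j,P_k,c]=\pm Q_{\cA}(c)$. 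The forms are nonzero exactly when each pair $P_j,P_k$ is independent. That condition is implied by general position, and it is checked below to follow from $X\in Y_{2,5}$.

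The second step matches each factor with its minor. I would trace through the bijection of Proposition~\ref{prop:chart}. A factor $\det Z_{P,Q}$ with $m=3\in P$ comes from $I=I_1\sqcup(2+I_2)$, where $I_1=[3]\setminus P\subset\{1,2\}$ and $I_2=Q$; here the $Z$-columns $Q$ are the original columns $2+Q\subset\{3,4,5\}$. I would check each of the three types.
\begin{itemize}[leftmargin=2em]
\item For $c_k$ (original column $k$): $P=\{3\}$, $I_1=\{1,2\}$, so $I=\{1,2,k\}$, and the complement is $\{i,j\}$, which matches.
\item For $a_jc_k-a_kc_j$: $P=\{1,3\}$, $I_1=\{2\}$, so $I=\{2,j,k\}$, with complement $\{1,i\}$, which matches $H_{1i}$.
\item The $b$-minors and $\det Z$ (where $I=\{3,4,5\}$, complement $\{1,2\}$) are analogous.
\end{itemize}

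The third step concerns general position. Triples among $P_3,P_4,P_5$ are trivially independent. A triple $\{P_1,e_j,e_k\}$ is independent iff $a_i\ne0$; triples with $P_2$ give $b_i\ne0$; and $\{P_1,P_2,e_k\}$ is independent iff the $2\times2$ minor $a_ib_j-a_jb_i\ne0$ for $\{i,j\}=\{3,4,5\}\setminus\{k\}$. These are exactly the conditions that all minors of $X$ (sizes $1$ and $2$) are nonzero, i.e.\ $X\in Y_{2,5}$. By Lemma~\ref{lem:minorsIW} this is equivalent to all $2\times2$ minors of $[I_2|X]$ being nonzero, i.e.\ to the five columns being pairwise independent.

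Finally, for Gale duality I would invoke Proposition~\ref{prop:gale}(c) with $m=2$, $n=5$: $\operatorname{rowspace}[I_2|X]^\perp=\operatorname{rowspace}[-X^T|I_3]$. The columns of $[-X^T|I_3]$ are $-a,-b,e_3,e_4,e_5$, i.e.\ $P_1,\dots,P_5$ up to sign. I expect the main obstacle to be the index and sign bookkeeping in the second step, namely confirming that the correspondence $I\mapsto[5]\setminus I$ holds uniformly rather than up to some permutation. I would settle it by the kernel description, which is sign-free.
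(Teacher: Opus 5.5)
Your proposal is correct and follows essentially the same route as the paper: you identify each factor's kernel plane from the vectors it vanishes on, match it to $\Delta_I$ through the complement $[5]\setminus I$, read off general position from the nonvanishing of the $1\times1$ and $2\times2$ minors of $X$, and cite Proposition~\ref{prop:gale}(c) for the Gale transform $[-X^T|I_3]$. The only difference is cosmetic: you route the minor matching through the bijection of Proposition~\ref{prop:chart}, whereas the paper reads $\Delta_{12k}=c_k$, $\Delta_{1jk}$, $\Delta_{2jk}$ and $\Delta_{345}=\det Z$ directly off $[I_2|X]$ with last row $c$; also, $a_jc_k-a_kc_j=\pm[a,e_i,c]$ is just a cofactor expansion and does not need to be deduced from the vanishing set.
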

\begin{proof}
The plane $c_k=0$ contains $e_i,e_j$ with $\{i,j,k\}=\{3,4,5\}$, and corresponds to $\Delta_{12k}$, whose complement is $\{i,j\}$. The plane $b_jc_k-b_kc_j=0$ contains $b=P_2$ and $e_i$, and corresponds to $\Delta_{1jk}$, whose complement is $\{2,i\}$. The plane $a_jc_k-a_kc_j=0$ corresponds in the same way to $\Delta_{2jk}$ and $\{1,i\}$. Finally $\det Z=0$ is the plane $\operatorname{span}(a,b)$, which corresponds to $\Delta_{345}$. General position of the $P$'s means that $a_k,b_k\ne0$ and that all $2\times2$ minors of $X$ are nonzero, i.e.\ $X\in Y_{2,5}$. The Gale transform of $[I_2|X]$ is given by the columns of $[-X^T|I_3]$, see Proposition~\ref{prop:gale}(c).
\end{proof}

Thus, near a generic rank-two matrix, $D_{3,5}$ is, up to a smooth factor, the total space of a family, parametrized by $X$ near $X_0\in Y_{2,5}$, of copies of the non-free arrangement of Proposition~\ref{prop:tenplanes}. Using the torus action, the base of the family can be cut down to the two-dimensional moduli space $\Mo{}_{,5}$ of the configuration. The family is not analytically locally trivial, since the moduli of the five points vary (it is topologically locally trivial, cf.\ the proof of Proposition~\ref{prop:36}).

\begin{remark}\label{rem:family}
Non-freeness of the fibres does not formally imply non-freeness of the total space of a non-trivial family. We do not know a general statement of this kind. For $D_{3,5}$ non-freeness is established by computation in Section~\ref{sec:35}. As supporting evidence for a general principle, we checked the family of generic arrangements of five planes in $\C^3$ over its two-dimensional moduli space, $c_1c_2c_3(c_1+c_2+c_3)(c_1+sc_2+uc_3)$ in $\C^5$ with coordinates $(s,u,c)$: it is likewise non-free exactly along $\{c=0\}$.
\end{remark}

\section{The logarithmic vector fields of \texorpdfstring{$D_{3,5}$}{D3,5}}\label{sec:35}

In this section $m=3$, $n=5$, $R=\C[x_{rk}]$ has $15$ variables, $f=f_{3,5}$ has degree $30$, and $\cL=\cL_{3,5}$ is the $13$-dimensional space of linear logarithmic fields, see Lemma~\ref{lem:linear}.

\begin{definition}\label{def:Phi}
For $i\ne k$ in $[5]$ put $\widetilde\psi_{i\to k}=(-1)^{i+k}\sgn(i-k)\,\psi_{i\to k}$, with $\psi_{i\to k}$ as in \eqref{eq:psi}; here $c_{i\to k}$ is the product of the three minors $\Delta_I$ with $i\in I\not\ni k$. For each $k$ choose some $i(k)\ne k$ and put $\Phi_k=\widetilde\psi_{i(k)\to k}$. By Theorem~\ref{thm:35}(1) below, the class of $\Phi_k$ modulo $R\cL$ does not depend on the choice.
\end{definition}

\begin{theorem}\label{thm:35}
Let $\lambda_1,\dots,\lambda_{13}$ be a basis of $\cL$.
\begin{enumerate}[label=\textup{(\arabic*)},leftmargin=2em]
\item For each $k$, the four fields $\widetilde\psi_{i\to k}$, $i\ne k$, are congruent modulo $R\cL$, and none of them lies in $R\cL$.
\item $\lambda_1,\dots,\lambda_{13},\Phi_1,\dots,\Phi_5$ is a minimal system of homogeneous generators of $\Der(-\log D_{3,5})$. In particular, the degree-one part of $\Der(-\log D_{3,5})$ is $\cL$, and there are no minimal generators in degrees $2,\dots,9$.
\item There are $a_{rj}\in R_{10}$ such that
\begin{equation}\label{eq:syz}
\sum_{k=1}^5x_{rk}\,\Phi_k=\sum_{j=1}^{13}a_{rj}\,\lambda_j\qquad(r=1,2,3),
\end{equation}
and these three relations generate the module of syzygies. Hence the minimal graded free resolution is
\[0\longrightarrow R(-11)^3\xrightarrow{\ \left(\begin{smallmatrix}-A\\ M^T\end{smallmatrix}\right)\ }R(-1)^{13}\oplus R(-10)^5\longrightarrow\Der(-\log D_{3,5})\longrightarrow0,\qquad A=(a_{rj})^T.\]
\item $R\cL\cong R(-1)^{13}$ is a free submodule, and $\Der(-\log D_{3,5})/R\cL\cong\coker\big(M^T\colon R(-11)^3\to R(-10)^5\big)$.
\item For $p\in\C^{3\times5}$, the stalk $\Der(-\log D_{3,5})_p$ is minimally generated by exactly $18-\rk M(p)$ elements. The non-free locus of $D_{3,5}$ is $\{\rk M\le2\}$, which is irreducible of codimension $3$, and $\pd_R\Der(-\log D_{3,5})=1$.
\item In the $\Z^3\times\Z^5$-grading of Lemma~\ref{lem:psi}, $\Phi_k$ has degree $\big((3,3,3),(2,2,2,2,2)-e_k\big)$, and the syzygy \eqref{eq:syz} has degree $\big((3,3,3)+e_r,(2,2,2,2,2)\big)$.
\end{enumerate}
\end{theorem}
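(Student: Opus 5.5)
The proof is a mix of structural arguments and a certified computer-algebra computation in $R=\Q[x_{rk}]$. By Lemma~\ref{lem:standard}(f) it suffices to work over $\Q$. I would proceed in the order (6), (1), (2)–(3), (4), (5), because the multigrading of (6) is what makes the computations tractable and pins down the shape of the answer.

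\emph{Step 1: gradings and the congruences (1).} Part (6) is immediate from Lemma~\ref{lem:psi} and Example~\ref{ex:psi35}. For the syzygy, multiplying $\Phi_k$ by $x_{rk}$ adds $(e_r,e_k)$, which gives $\big((3,3,3)+e_r,(2,\dots,2)\big)$ for every $k$. For (1), note that $\widetilde\psi_{i\to k}-\widetilde\psi_{i'\to k}$ is multihomogeneous of a single multidegree. The plan is to show it lies in $R\cL$ by an explicit identity. By $\SL_3\times S_5$-equivariance it is enough to treat one pair $(i,i')$ for $k=5$.

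Restrict to the open set $\Omega$ and use the slice of Proposition~\ref{prop:slice}. There the $\GL_3$-directions together with the torus span everything tangent to orbits, and the difference must be tangent to the $G$-orbits. Every logarithmic field vanishing on $f$-orbits modulo $G$ reduces to a log field on $E_{3,5}$ in $Y$-coordinates. Concretely, I would verify the identity $\widetilde\psi_{i\to5}-\widetilde\psi_{i'\to5}=\sum b_j\lambda_j$ by solving a linear system in the finite-dimensional multigraded piece of degree $9$ coefficients. Non-membership $\widetilde\psi_{i\to k}\notin R\cL$ follows by evaluating at a rank-two point $p$. There all $\lambda_j(p)$ span a space of dimension at most $\dim G\cdot p$, and one checks that $\psi(p)$ is not in it, or compares the values of the coefficients of $\partial/\partial x_{ri}$.

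\emph{Step 2: generators and syzygies (2)–(3).} Compute $\Der(-\log f)$ as the kernel of $R^{15}\oplus R\to R$, $(\theta,g)\mapsto\theta(f)-gf$. Equivalently, intersect the ten modules $\{\theta:\theta(\Delta_I)\in(\Delta_I)\}$. This is a syzygy computation in 15 variables, and the multigrading splits it into small pieces. The output must be compared with the span of $\lambda_j,\Phi_k$, which requires checking equality of two submodules via Gröbner bases. Minimality follows from degrees: the linear part is $\cL$, and no $\Phi_k$ lies in $R\cL+\sum_{k'\ne k}R\Phi_{k'}$ because the multidegrees differ in the column part. The relations \eqref{eq:syz} exist by the Euler-type identity: $\sum_k x_{rk}\Phi_k$ is a field of the same multidegree, and it lies in $R\cL$ by the computed presentation. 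Its coefficients are read off from the computation.

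Completeness of the syzygies is where the real work lies. The Hilbert series of $\Der(-\log f)$ computed by machine must match $13t/(1-t)^{15}\cdot\ldots$, i.e.\ the series of the proposed resolution. Then an injectivity check of $R(-11)^3\to R(-1)^{13}\oplus R(-10)^5$ finishes the argument. Injectivity holds because $M^T$ has rank $3$ generically.

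\emph{Step 3: (4) and (5).} Part (4) combines Lemma~\ref{lem:linear} with the resolution: quotienting by the free summand $R\cL$ leaves the presentation $M^T$. For (5), apply Lemma~\ref{lem:standard}(e) to the presentation matrix $\binom{-A}{M^T}$ with $g=18$. I must check that its rank at $p$ equals $\rk M(p)$, i.e.\ that $A(p)$ adds nothing. This holds since the $a_{rj}$ have degree $10$ and vanish at $0$. At other points I would argue with the equivariance and the $M^T$ block, or verify directly that the rows of $A$ lie in the row span of $M^T$ over the rank strata. Then the non-free locus is where $\rk M<3$, which is irreducible of codimension $(3-2)(5-2)=3$. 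Finally, $\pd=1$ because the resolution has length one and the module is not free.

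The main obstacle is the certified completeness in Step 2 (the 15-variable Gröbner computation). As a cross-check I would use the 9-variable chart of Example~\ref{ex:chart35} together with Proposition~\ref{prop:chart}, where the non-freeness is independently visible from Proposition~\ref{prop:tenplanes}.
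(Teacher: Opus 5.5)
For (1)--(3) you follow the same computational route as the paper, a Gr\"obner computation in $15$ variables. Your derivation of (6) from Lemma~\ref{lem:psi} and of minimality in (2) from the $\Z^3\times\Z^5$-grading is fine, and lighter than the paper's extra computations. Two steps, however, do not hold up as written.

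First, your test for $\widetilde\psi_{i\to k}\notin R\cL$ cannot work. At a point $p$ of rank two every $3\times3$ minor vanishes, so $c_{i\to k}(p)=0$ and $\psi_{i\to k}(p)=0$, which lies in every span. Your minimality argument reduces exactly to this non-membership, so the gap propagates to (2). The fix is to evaluate at a point $p\in U_{3,5}$ instead. There $\psi_{i\to k}(p)=c_{i\to k}(p)\,pE_{ki}\neq0$. Suppose $c_{i\to k}(p)\,pE_{ki}=Bp+p\operatorname{diag}(s)$ with $B\in\mathfrak{gl}_3$. Then the four columns $v_j$, $j\ne i$, are eigenvectors of $B$, which forces $B$ to be scalar, and then $v_k\in\C v_i$, a contradiction. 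Every element of $R\cL$ takes values in $T_p(G\cdot p)$, so $\psi_{i\to k}\notin R\cL$.

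Second, in (5) the entire content, including non-freeness along $\{\rk M\le2\}$, is the claim that $A(p)$ adds no rank to $P(p)$ at points of rank $1$ and $2$. You only announce this. The paper handles rank $2$ through the independent nine-variable chart computation (Proposition~\ref{prop:certificate} and Corollary~\ref{cor:nonfreelocus}), which gives $\mu\ge16$. It handles rank $1$ by substituting $M=uw^T$ into a presentation matrix and computing its rank over $\Q(u,w)$.

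Your equivariance idea can in fact be completed, and then (5) becomes a formal consequence of (3). For $g\in\GL_3$ put $(g_*\theta)(M)=g\,\theta(g^{-1}M)$, so that $g_*\Phi_k=\det(g)^{-3}\Phi_k$ and $g_*$ acts on $\cL$ by $\xi_B\mapsto\xi_{gBg^{-1}}$, $\eta_k\mapsto\eta_k$; call this action $\operatorname{Ad}_g$. Set $\mathcal A_\beta=\sum_j\big(\sum_r\beta_ra_{rj}\big)\lambda_j$. The coefficients in \eqref{eq:syz} are unique by Lemma~\ref{lem:linear}, so
\[
\operatorname{Ad}_g\mathcal A_\beta(g^{-1}M)=\det(g)^{-3}\mathcal A_{g^{-T}\beta}(M).
\]
Now take $\alpha$ with $\alpha^TM(p)=0$ and $w_0$ with $\alpha^Tw_0=1$. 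Then $g_t=I+tw_0\alpha^T$ fixes $p$, $\det g_t=1+t$ and $g_t^{-T}\alpha=(1+t)^{-1}\alpha$. Hence $\operatorname{Ad}_{g_t}\mathcal A_\alpha(p)=(1+t)^{-4}\mathcal A_\alpha(p)$. The eigenvalues of $\operatorname{Ad}_{g_t}$ on $\cL$ lie in $\{1,(1+t)^{\pm1}\}$, so taking $t=1$ gives $\mathcal A_\alpha(p)=0$. Thus $\ker M(p)^T\subseteq\ker A(p)$, and $\rk P(p)=\rk M(p)$ at every $p$. Without this argument, or an explicit computation as in the paper, the stalk count and the non-free locus remain unproved.
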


\begin{corollary}\label{cor:notfree}
$D_{3,5}$ is not a free divisor. This answers \cite[Question~3.4]{Yim} negatively for $3\times5$ matrices.
\end{corollary}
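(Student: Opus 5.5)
The corollary should follow from Theorem~\ref{thm:35}, but I would also give a short argument that avoids the full $15$-variable computation, using the chart of Proposition~\ref{prop:chart} as advertised in the introduction.

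\emph{Route via Theorem~\ref{thm:35}.} By part~(5), the stalk of $\Der(-\log D_{3,5})$ at $0$ (where $\rk M(0)=0$) needs $18>15$ generators. Hence it is not free. By Lemma~\ref{lem:standard}(c), a homogeneous divisor is free if and only if it is free at $0$, so $D_{3,5}$ is not free. Equivalently, part~(3) gives a minimal resolution of length one with a nonzero syzygy module, so $\pd_R\Der(-\log D_{3,5})=1\neq0$. As a consistency check, the generator degrees satisfy $13\cdot1+5\cdot10=63\neq30=\deg f$, which is incompatible with Lemma~\ref{lem:standard}(b) if a basis existed. That degree count alone is not a proof, however, since the $18$ elements are not a basis.

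\emph{Independent route via the chart.} I would take the rank-two point $M_0=(e_1,e_2,0,0,0)$. By Proposition~\ref{prop:chart} and Lemma~\ref{lem:standard}(d), freeness of $D_{3,5}$ at $M_0$ is equivalent to freeness of $E'_{3,5}=V(f'_{3,5})\subset\C^9$ at $Z=0$. Since $f'_{3,5}$ is homogeneous, Lemma~\ref{lem:standard}(c) reduces this to whether the graded module $\Der(-\log f'_{3,5})$ is free.

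This would be settled by a Gr\"obner computation (Proposition~\ref{prop:certificate}), computing $\Der(-\log f'_{3,5})$ over $\Q$, which is legitimate by Lemma~\ref{lem:standard}(f). The certificate would be either a nonzero syzygy among its minimal generators, or a minimal generator count exceeding $9$. For instance, one could exhibit that there are more than $9$ minimal homogeneous generators, or that the degrees of any $9$ independent ones cannot sum to $18$ by Lemma~\ref{lem:standard}(b).

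Heuristically, the restriction to a fibre $X=X_0\in Y_{2,5}$ is the ten-plane arrangement of Proposition~\ref{prop:transversal}, which is not free by Proposition~\ref{prop:tenplanes}(d). As Remark~\ref{rem:family} warns, however, this does not formally transfer to the total space, so the computation is genuinely needed. This computational certificate is the main obstacle. Everything else is bookkeeping with the cited lemmas.

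Finally, non-freeness at any single point implies $D_{3,5}$ is not a free divisor, answering \cite[Question~3.4]{Yim}.
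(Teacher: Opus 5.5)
Your proposal is correct and follows the paper. The paper reads the corollary off Theorem~\ref{thm:35}: the stalk at $0$ needs $18>15$ generators, equivalently the minimal resolution has length one. The paper also notes that non-freeness alone already follows from the nine-variable Gr\"obner computation of Proposition~\ref{prop:certificate}, which gives $10>9$ minimal generators with one syzygy and is transported to $D_{3,5}$ at a rank-two point via Proposition~\ref{prop:chart} and Lemma~\ref{lem:standard}(d), just as in your second route.
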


\subsection{Simplification}
The non-freeness of $D_{3,5}$ and the description of its non-free locus do not need the $15$-variable computation. It suffices to work in the chart of Example~\ref{ex:chart35}.

\begin{proposition}\label{prop:certificate}
Let $S=\C[a_3,a_4,a_5,b_3,b_4,b_5,c_3,c_4,c_5]$ and let $f'=f'_{3,5}\in S$ be as in Example~\ref{ex:chart35}. Then:
\begin{enumerate}[label=\textup{(\alph*)},leftmargin=2em]
\item $\Der(-\log f')$ has minimal free resolution $0\to S(-7)\to S(-1)^7\oplus S(-6)^3\to\Der(-\log f')\to0$, with the seven linear generators listed in Example~\ref{ex:chart35};
\item the non-free locus of $E'_{3,5}=V(f')$ is the linear subspace $\{c=0\}$.
\end{enumerate}
\end{proposition}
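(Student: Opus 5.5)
Part (a) is a finite Gröbner basis computation, and I will organize it so that every claim comes with a certificate that can be checked by hand or by a short script. Part (b) then follows from (a) via Lemma~\ref{lem:standard}(e).

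\emph{Generators.} First I compute $\Der(-\log f')$ as the kernel of a map of free modules over $\Q$; by Lemma~\ref{lem:standard}(f) this suffices. I use the factorization of Example~\ref{ex:chart35}, and impose $\theta(g)\in gS$ for each of the ten irreducible factors $g$ (nine linear forms and $\det Z$). So $\Der(-\log f')$ is the intersection of ten submodules of $S^9$. Each of these is the kernel of the map $S^9\oplus S\to S/(g)$ that sends $(\theta,h)\mapsto\theta(g)-hg$.

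The computation should return the seven linear fields of Example~\ref{ex:chart35}: five independent scalings of rows and columns, and the shears $c\cdot\partial_a$, $c\cdot\partial_b$. It should also return three fields of degree $6$. I expect the sextic fields to be the chart images of the $\psi_{i\to k}$ from Lemma~\ref{lem:psi}, restricted via Proposition~\ref{prop:chart}. The naive degree count does not fit exactly: $\psi_{i\to k}$ has degree $10$ on $\C^{3\times5}$, and in the chart one of its factors becomes a unit. So I will simply take the fields the computation produces, check tangency to each factor directly, and record them explicitly.

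\emph{Resolution.} Next I compute the syzygy module of the ten generators and confirm that it is free of rank one, generated in degree $7$. A degree-$7$ syzygy pairs the sextics with linear coefficients and the linear fields with sextic coefficients. The Euler characteristic check is then $10-1=9$, which is the rank.

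Minimality follows from two observations. The syzygy has no constant entries, since every coefficient has positive degree. And no sextic lies in the span of the linear fields with coefficients in $S$. The second point can be seen from the resolution itself, or by evaluating at a point.

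Saito's criterion, Lemma~\ref{lem:standard}(a), gives an independent sanity check. Since the module is not free, no $9$-subset of the generators can have determinant equal to a unit multiple of $f'$. Consistent with this, the degrees sum to $7+18=25\neq18$ or to $1\cdot6+18=24$, never to $\deg f'=18$.

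\emph{Non-free locus.} For (b), let $P$ be the $10\times1$ presentation matrix of the syzygy. By Lemma~\ref{lem:standard}(e), the stalk at $p$ needs $10-\rk P(p)$ generators. So $E'$ is non-free at $p$ exactly when $P(p)=0$, i.e.\ at the common zeros of the entries of the syzygy.

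I therefore need $\sqrt{(\text{entries of }P)}=(c_3,c_4,c_5)$. The containment $\supseteq$ should be visible structurally. I expect each entry of the syzygy to be divisible by $c$-coordinates, because the syzygy presumably has the shape $\sum c_k\cdot(\text{sextic}_k)=\ldots$. This mirrors the relations $\sum_k x_{rk}\Phi_k$ of Theorem~\ref{thm:35}, and in the chart $z=c$ is exactly the row that vanishes at rank two.

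For $\subseteq$, I find explicit elements of the ideal generated by the entries, which are powers of $c_3,c_4,c_5$ up to units. This is a radical membership test and is again a Gröbner computation.

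\emph{Main obstacle.} The main obstacle is to make the non-freeness certificate conceptual rather than opaque. The fallback is purely computational: record the generators and the syzygy explicitly, so that the reader can verify three things directly. First, that each field is tangent to each factor. Second, that the syzygy holds. Third, via a Hilbert series comparison, that nothing else is needed.

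The Hilbert series comparison closes the argument. Compute the Hilbert series of $\Der(-\log f')$ as the kernel. Then check that it equals
\[
\frac{7t+3t^6-t^7}{(1-t)^9},
\]
with the grading shifted as appropriate. Together with the generators and the syzygy, this matching Hilbert series certifies exactness of the resolution in (a).
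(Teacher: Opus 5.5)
Your proposal follows the paper's proof. It computes $\Der(-\log f')$ by Gr\"obner bases over $\Q$ and passes to $\C$ by Lemma~\ref{lem:standard}(f). For (b) it applies Lemma~\ref{lem:standard}(e) to the $10\times1$ presentation matrix, whose entries generate an ideal with radical $(c_3,c_4,c_5)$; the Hilbert-series and Saito checks are harmless extras.

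A few slips do not affect the argument:
\begin{itemize}
\item Only three of the ten factors of $f'$ are linear. Six are quadrics and one is the cubic $\det Z$, which is how the total degree comes to $18$.
\item The $9$-subset containing all seven linear fields has degree sum $7+12=19$, not $25$.
\item Your labels for the two radical containments are swapped. Entries lying in $(c_3,c_4,c_5)$ give $\subseteq$, and powers of the $c_k$ lying in the ideal give $\supseteq$.
\end{itemize}
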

\begin{proof}
This is a Gr\"obner basis computation over $\Q$ (see Appendix~\ref{app:code}). By Lemma~\ref{lem:standard}(f), it gives the result over $\C$. For (b), the Fitting ideal $\mathrm{Fitt}_9$ is generated by the entries of the $10\times1$ presentation matrix, and its radical is $(c_3,c_4,c_5)$, then apply Lemma~\ref{lem:standard}(e).
\end{proof}

\begin{corollary}\label{cor:nonfreelocus}
The non-free locus of $D_{3,5}$ is exactly the determinantal variety $\{\rk M\le2\}$.
\end{corollary}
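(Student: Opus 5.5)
The plan is to split $\C^{3\times5}$ by rank and treat each stratum with the local models already established.

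\emph{Rank three.} By Corollary~\ref{cor:fullrankfree}(b), $D_{3,5}$ is free at every matrix of rank $3$. So the non-free locus lies inside $\{\rk M\le2\}$.

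\emph{Rank two.} Let $p$ have rank exactly $2$. By Proposition~\ref{prop:chart}, some element of $S_5\times\GL_3$ moves $p$ into $\Omega'$. This group preserves $D_{3,5}$, so freeness at $p$ is unchanged. In $\Omega'$ we have the product decomposition $D_{3,5}\cap\Omega'\cong\GL_2\times\C^2\times E'_{3,5}$, and rank two corresponds to $z=c=0$. Lemma~\ref{lem:standard}(d) then says that $D_{3,5}$ is free at $p$ if and only if $E'_{3,5}$ is free at the corresponding point $(X,0)$. By Proposition~\ref{prop:certificate}(b), the non-free locus of $E'_{3,5}$ is exactly $\{c=0\}$. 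Hence every rank-two point is non-free.

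\emph{Rank at most one.} It remains to show that points of rank $\le1$ are non-free. These are not covered by the chart. By Lemma~\ref{lem:standard}(c), the non-free locus is Zariski closed. It contains the rank-two stratum, whose Zariski closure is the irreducible determinantal variety $\{\rk M\le2\}$. Therefore the non-free locus contains all of $\{\rk M\le2\}$. Combined with the rank-three step, this gives equality.

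\emph{Where the difficulty sits.} The only delicate point is the rank-two step: applying Proposition~\ref{prop:chart} and Lemma~\ref{lem:standard}(d) correctly, using that the chart isomorphism is an analytic product with a smooth factor that respects the divisor. The real content lies in the computation behind Proposition~\ref{prop:certificate}, which is assumed here.
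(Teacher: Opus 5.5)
Your proposal is correct and follows the paper's proof essentially step for step. Rank three is handled by Corollary~\ref{cor:fullrankfree}, rank two by the chart of Proposition~\ref{prop:chart} together with Lemma~\ref{lem:standard}(d) and Proposition~\ref{prop:certificate}(b), and rank at most one by closedness of the non-free locus and density of the rank-two stratum in $\{\rk M\le2\}$.
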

\begin{proof}
By Corollary~\ref{cor:fullrankfree}, $D_{3,5}$ is free at points of rank $3$. Let $p$ have rank $2$. By Proposition~\ref{prop:chart}, some $\sigma\in S_5\times\GL_3$ moves $p$ into $\Omega'$, where it corresponds to a point with $c=0$. By Proposition~\ref{prop:chart} and Lemma~\ref{lem:standard}(d), $D_{3,5}$ is free at $p$ if and only if $E'_{3,5}$ is free at the corresponding point, and it is not by Proposition~\ref{prop:certificate}(b). Points of rank $\le1$ lie in the closure of the rank-two locus, and the non-free locus is closed.
\end{proof}

\subsection{Proof of Theorem~\ref{thm:35}}
The fields $\lambda_j$ and $\psi_{i\to k}$ are logarithmic by Lemmas~\ref{lem:linear} and~\ref{lem:psi}. Statements (1), (2), (3) and (6) are verified by an exact computation in Macaulay2 (again, see Appendix~\ref{app:code}). We compute $\Der(-\log f)$ as the kernel of $R^{15}\oplus R^{10}\to R^{10}$, $(\theta,h)\mapsto(\theta(\Delta_I)-h_I\Delta_I)_I$, projected to $R^{15}$, and obtain the minimal free resolution of (3). We then check:
\begin{itemize}[leftmargin=2em]
\item membership of each $\psi_{i\to k}$ in $\Der(-\log f)$;
\item non-membership in $R\cL$;
\item the congruences in (1);
\item equality of the submodule generated by $\cL$ and $\Phi_1,\dots,\Phi_5$ with $\Der(-\log f)$, for several choices of $i(k)$;
\item the relations \eqref{eq:syz}, and that the syzygies of $(\lambda_1,\dots,\lambda_{13},\Phi_1,\dots,\Phi_5)$ are generated by three elements of degree $11$.
\end{itemize}
The multidegrees in (6) come from a separate computation in the $\Z^8$-graded ring.

For item (4), by Lemma~\ref{lem:linear} the $\lambda_j$ are $R$-independent, so $R\cL\cong R(-1)^{13}$. The quotient $\Der/R\cL$ is the cokernel of the projection of the syzygy module to the summand $R(-10)^5$. By (3) this projection is the image of $M^T$.

For item (5), by Lemma~\ref{lem:standard}(e), $\mu(\Der_p)=18-\rk P(p)$, where $P=\left(\begin{smallmatrix}-A\\M^T\end{smallmatrix}\right)$. Since $M(p)^T$ is a submatrix of $P(p)$, we get $\rk P(p)\ge\rk M(p)$, hence $\mu(\Der_p)\le18-\rk M(p)$. We show equality rank by rank.
\begin{itemize}[leftmargin=2em]
\item Rank $3$: $\mu\ge15$ always, since $\Der$ has rank $15$.
\item Rank $2$: $\mu\ge16$ by Corollary~\ref{cor:nonfreelocus}.
\item Rank $1$: substituting $M=uw^T$ with generic $u\in\C^3$, $w\in\C^5$ into a minimal presentation matrix gives a matrix of rank $1$ over $\C(u,w)$ (Appendix~\ref{app:code}). So all $2\times2$ minors of $P$ vanish on the rank-one locus, and $\mu\ge17$ there. The rank of $P(p)$ does not depend on the presentation, since $18-\rk P(p)=\mu(\Der_p)$ for any presentation on $18$ generators.
\item Rank $0$: all entries of $P$ have positive degree, so $\mu=18$.
\end{itemize}
The remaining assertions follow from item (3) and Corollary~\ref{cor:nonfreelocus}. \qed

\subsection{A bunch of remarks}

\begin{remark}
If $D_{3,5}$ were free, Lemma~\ref{lem:standard}(b) would force a basis consisting of the $13$ linear fields and two fields of degrees $d_1+d_2=30-13=17$. This is impossible by Theorem~\ref{thm:35}(2), since the next minimal generators occur in degree $10$. The resolution in (3) is consistent with the determinant of $\Der(-\log f)$ being $R(-\deg f)$: $13\cdot1+5\cdot10-3\cdot11=30$.
\end{remark}

\begin{remark}
Let $\rho\colon\{\rk M=3\}\to\Gr(3,5)$ send $M$ to its row space $\Lambda$. On the open set $\{\rk M=3\}$, the module $\coker(M^T)$ of Theorem~\ref{thm:35}(4) is the pull-back of the tautological quotient bundle $\C^5/\Lambda$, a vector bundle of rank $2$. So, over $\{\rk M=3\}$ and modulo the vector fields tangent to the $\GL_3\times(\C^*)^5$-orbits, the logarithmic vector fields of $D_{3,5}$ are sections of $\rho^*(\C^5/\Lambda)(-10)$. On all of $\C^{3\times5}$, the sheaf $\coker(M^T)$ is locally free exactly where $\rk M=3$.

In general, $\operatorname{rank}\Der(-\log D_{m,n})-\dim\cL_{m,n}=mn-m^2-n+1=(m-1)(n-m-1)$, the dimension of the moduli space of $n$ points in general position in $\PP^{m-1}$. For $2\times5$ this quotient is free of rank $2$ ($\Der(-\log D_{2,5})\cong R(-1)^8\oplus R(-6)^2$); for $3\times5$ it is the non-free rank-two sheaf above.
\end{remark}

\begin{remark}
All five $\Phi_k$ have row degree $(3,3,3)$. Hence the degree-$10$ part of $\Der/R\cL$ is a sum of copies of the character $\det^3$ of $\GL_3$, and its generators can be chosen $\SL_3$-equivariant, as they are. By the first fundamental theorem of invariant theory, every $\SL_3$-equivariant polynomial vector field on $\C^{3\times5}$ has the form $M\mapsto M\cdot C(M)$, with $C$ a $5\times5$ matrix of polynomials in the Pl\"ucker coordinates $\Delta_I$. For $\psi_{i\to k}$ one has $C=c_{i\to k}E_{ki}$, the simplest possible choice in the weight spaces of Theorem~\ref{thm:35}(6).
\end{remark}

\begin{remark}
Under Gale duality the Pl\"ucker coordinate $\Delta_I$ of $\Gr(3,5)$ corresponds, up to sign, to the coordinate $\Delta'_{[5]\setminus I}$ of $\Gr(2,5)$. The coefficient $c_{i\to k}=\prod_{I\ni i,\,I\not\ni k}\Delta_I$ then corresponds to $\prod_{J\ni k,\,J\not\ni i}\Delta'_J=c'_{k\to i}$, the coefficient of the field $\psi'_{k\to i}$ of $D_{2,5}$. The fields $\psi'_{k\to i}$ are logarithmic of degree $7$ (Lemma~\ref{lem:psi}), and they are not minimal generators of the free module $\Der(-\log D_{2,5})$, whose non-linear generators have degree $6$. We do not know whether this correspondence has a conceptual explanation at the level of logarithmic vector fields.
\end{remark}

\section{Other sizes}\label{sec:other}

\subsection{Transversal arrangements in general}
The chart of Proposition~\ref{prop:chart} gives a uniform description of the transversal structure of $D_{m,n}$ at generic points of rank $m-1$.

\begin{proposition}\label{prop:Hk}
In the chart of Proposition~\ref{prop:chart}, fix $X\in Y_{m-1,n}\subset\C^{(m-1)\times(n-m+1)}$, i.e.\ assume that the columns of $[I_{m-1}|X]$ are in general position. Then $f'_{m,n}(X,z)$ is a product of $\binom nm$ nonzero linear forms in $z\in\C^{n-m+1}$. They define the arrangement $\mathcal H_{n-m}(P)$ of the hyperplanes spanned by the $(n-m)$-element subsets of the $n$ vectors
\[P=(X_1,\dots,X_{m-1},e_1,\dots,e_{n-m+1})\]
in $\C^{n-m+1}$, where $X_p$ are the rows of $X$. Up to signs, $P$ is the Gale transform of the $n$ columns of $[I_{m-1}|X]$.
\end{proposition}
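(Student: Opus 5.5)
We generalize the proof of Proposition~\ref{prop:transversal}. By Proposition~\ref{prop:chart}, the factors of $f'_{m,n}$ are the minors $\det Z_{P',Q}$ with $m\in P'$, where $Z=\binom{X}{z}$ and $z\in\C^{1\times(n-m+1)}$. Expanding such a minor along its last row (the row $z$) gives
\[\det Z_{P',Q}=\sum_{q\in Q}\pm z_q\det X_{P'\setminus\{m\},\,Q\setminus\{q\}},\]
so each factor is a linear form in $z$. For $|P'|=1$ this is just $z_q$. Its coefficient vector is nonzero because the minors of $X$ of size $|P'|-1$ are nonzero for $X\in Y_{m-1,n}$ (convention $\det X_{\emptyset,\emptyset}=1$). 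There are $\binom nm$ of these forms, by the counting in Proposition~\ref{prop:chart}.

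Next we identify each form with a hyperplane of $\mathcal H_{n-m}(P)$. The linear form $\ell_{P',Q}(z)=\det Z_{P',Q}$ vanishes on:
\begin{itemize}
\item $z=e_j$ for every $j\notin Q$, since column $j$ is not used;
\item $z=X_p$ for every $p\in P'\setminus\{m\}$, since the last row then repeats row $p$ restricted to $Q$.
\end{itemize}
This gives $(n-m+1-|Q|)+(|P'|-1)=n-m$ vectors from $P$ in $\ker\ell_{P',Q}$. We must show they are linearly independent. Their span is all of $\C^{n-m+1}$ modulo coordinates, and independence reduces to
\[\det\begin{pmatrix}X_{P'\setminus m,\,Q}\end{pmatrix}\ne0\]
after deleting the coordinates $e_j$, $j\notin Q$. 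This is a square minor of $X$ of size $|Q|-1$ only if the count is adjusted. Concretely, the vectors $X_p$ restricted to the columns $Q$ form a $(|Q|-1)\times|Q|$ matrix, and each of its maximal minors is a minor of $X$, hence nonzero, so it has full rank. Together with $\{e_j\}_{j\notin Q}$ this yields rank $n-m$. Since $\ell_{P',Q}\neq0$, the kernel is exactly the span of these $n-m$ vectors, which is a hyperplane $H_S$ of $\mathcal H_{n-m}(P)$.

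The assignment $(P',Q)\mapsto S=\{X_p\}_{p\in P'\setminus m}\cup\{e_j\}_{j\notin Q}$ is a bijection onto the $(n-m)$-subsets of $P$. The inverse reads off $P'$ from the $X$'s in $S$ and $Q$ from the $e$'s not in $S$. The constraint $|P'|=|Q|$ is equivalent to $|S|=n-m$, and both sides have $\binom nm$ elements. Thus $f'_{m,n}(X,\cdot)$ is, up to a nonzero scalar, the defining polynomial of $\mathcal H_{n-m}(P)$. It is worth checking, as the step most likely to hide an error, that distinct $S$ give distinct hyperplanes. This follows if $P$ is in general position, i.e.\ every $n-m+1$ of the vectors are independent. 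Such a determinant is again $\pm$ a minor of $X$, by the same expansion as in Lemma~\ref{lem:minorsIW}, so it is nonzero.

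Finally, the Gale duality claim follows from Proposition~\ref{prop:gale}(c). The Gale transform of the columns of $[I_{m-1}|X]$ is given by the columns of $[-X^T|I_{n-m+1}]$. These columns are $-X_1,\dots,-X_{m-1}$ (the rows of $X$ up to sign) followed by $e_1,\dots,e_{n-m+1}$, which is $P$ up to signs. The main obstacle is only the sign and index bookkeeping in the correspondence $(P',Q)\leftrightarrow S$; everything else reduces to minors of $X$ being nonzero.
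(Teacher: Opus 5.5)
Your proof is correct and follows the paper's route: you expand $\det Z_{P',Q}$ as a linear form in $z$, and you note that it vanishes on the $n-m$ vectors $e_j$ ($j\notin Q$) and $X_p$ ($p\in P'\setminus\{m\}$), which are independent. You then match the pairs $(P',Q)$ bijectively with the $(n-m)$-subsets of $P$; the only difference is that you derive independence and the general position of $P$ directly from the nonvanishing of the minors of $X$, where the paper cites that Gale duality preserves general position. You should delete the false-start sentence that calls $X_{P'\setminus\{m\},Q}$ a square minor (it is $(|Q|-1)\times|Q|$), since the ``Concretely'' argument that follows it is the correct one.
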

\begin{proof}
The minor $\det Z_{P',Q}$ with $m\in P'$ is linear in $z$. It vanishes at $z=e_j$ for $j\notin Q$ (the last row of $Z_{P',Q}$ is then zero) and at $z=X_p$ for $p\in P'\setminus\{m\}$ (two equal rows). These $(n-m+1-|Q|)+(|P'|-1)=n-m$ vectors are independent, since $P$ is in general position (Gale duality preserves general position), so they span the zero set. Conversely every $(n-m)$-subset of $P$ arises from exactly one pair $(P',Q)$.
\end{proof}

\begin{example}
\begin{enumerate}[label=(\alph*),leftmargin=2em]
\item $m=n-1$: $\mathcal H_1(P)$ consists of $n$ lines through the origin in $\C^2$, which is always free. This is consistent with $D_{n-1,n}$ being a linear free divisor.
\item $m=2$: $\mathcal H_{n-2}(P)$, for $P=(X_1,e_1,\dots,e_{n-1})$ in $\C^{n-1}$, is linearly equivalent (take $X_1=(1,\dots,1)$) to the essential braid arrangement $\{z_i=0\}\cup\{z_i=z_j\}$ in $\C^{n-1}$, of type $A_{n-1}$, which is free with exponents $1,\dots,n-1$. This is consistent with Yim's theorem.
\item $m=n-2$: $\mathcal H_2(P)$ is the arrangement of the $\binom n2$ planes spanned by pairs of $n$ generic vectors in $\C^3$ (Proposition~\ref{prop:pairs}).
\item $3\times6$: $\mathcal H_3(P)$ consists of $20$ hyperplanes in $\C^4$ spanned by triples of $6$ generic vectors. A direct computation of the intersection lattice for a random integral configuration gives
\[\chi(t)=(t-1)(t^3-19t^2+126t-300),\]
whose cubic factor is monotone and has a single real root in $(7,8)$. So this arrangement is not free, by Theorem~\ref{thm:terao}.
\end{enumerate}
\end{example}

\begin{proposition}\label{prop:pairs}
Let $n\ge4$ and let $P_1,\dots,P_n\in\C^3$ be generic. Then the arrangement of the $N=\binom n2$ planes $\operatorname{span}(P_i,P_j)$ has characteristic polynomial
\[\chi(t)=(t-1)\big(t^2-(N-1)t+b_n\big),\qquad b_n=n(n-2)+3\tbinom n4-N+1.\]
The discriminant of the quadratic factor is $-\tfrac14(n-2)(n-3)(n^2-5n+2)$, which is negative for $n\ge5$. Hence the arrangement is free for $n=4$ (it is the braid arrangement, $\chi=(t-1)(t-2)(t-3)$) and not free for $n\ge5$.
\end{proposition}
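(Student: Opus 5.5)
The plan is to compute the intersection lattice exactly as in Proposition~\ref{prop:tenplanes}, now for $n$ points, and then read off $\chi$ from the rank-two flats alone. First I would fix what ``generic'' means. The $P_i$ are in general position (any three independent). In addition, no line $H_{ij}\cap H_{kl}$ with $\{i,j\}\cap\{k,l\}=\emptyset$ lies on a third plane $H_{pq}$, and no two such lines coincide.

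Projectively these are finitely many polynomial conditions: no three of the lines $\overline{P_iP_j}$ with pairwise disjoint index pairs are concurrent. Each condition is nontrivial, since for a given triple of disjoint pairs one can move one of the points off the relevant line while keeping the others fixed. So the generic locus is a nonempty Zariski open set. When $n\le5$ there are no three disjoint pairs, and general position alone suffices, as in Proposition~\ref{prop:tenplanes}. Part (a) of that proof carries over verbatim, so the $N=\binom n2$ planes are distinct.

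Next I classify the rank-two flats, copying the argument of Proposition~\ref{prop:tenplanes}(b).
\begin{itemize}
\item Two planes sharing an index $i$ meet in $\C P_i$. The line $\C P_i$ lies exactly on the $n-1$ planes $H_{ij}$, so $\mu(\C P_i)=n-2$.
\item Two planes with disjoint index pairs meet in a line $\ell$. If $\ell$ lay on a plane $H_{ia}$ sharing the index $i$ with $H_{ij}$, then $\ell=\C P_i$, forcing $P_i\in H_{kl}$, which is impossible. Planes disjoint from both pairs are excluded by genericity. So $\ell$ lies on exactly two planes, has $\mu(\ell)=1$, and these lines are pairwise distinct and distinct from the $\C P_s$.
\end{itemize}
The number of unordered pairs of disjoint pairs is $3\binom n4$. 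Hence
\[\sum_{\rk X=2}\mu(X)=n(n-2)+3\tbinom n4=:c.\]

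The arrangement is central and essential, so
\[\chi(t)=t^3-Nt^2+ct-d,\qquad \chi(1)=0.\]
Writing $\chi(t)=(t-1)(t^2+\alpha t+\beta)$ and comparing coefficients gives $\alpha=-(N-1)$ and $\beta=c-N+1=b_n$. As sanity checks, $n=5$ gives $b_5=21$, recovering Proposition~\ref{prop:tenplanes}, and $n=4$ gives $t^2-5t+6=(t-2)(t-3)$.

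The discriminant $(N-1)^2-4b_n$ is a polynomial identity in $n$. I would verify it by expanding both sides, or by checking five values of $n$, since both sides have degree $4$. For $n\ge5$ we have $(n-2)(n-3)>0$ and $n^2-5n+2>0$, so the discriminant is negative. The quadratic then has non-real roots, and Theorem~\ref{thm:terao} rules out freeness.

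For $n=4$, four general points in $\PP^2$ with their six connecting lines form the complete quadrilateral. After a linear change of coordinates this is the braid arrangement of type $A_3$, for instance $P=e_1,e_2,e_3,(1,1,1)$, which is free with exponents $1,2,3$.

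The only delicate step is the genericity bookkeeping: making sure the extra non-concurrence conditions are really needed (they are, for $n\ge6$) and can be satisfied simultaneously. The rest is routine counting.
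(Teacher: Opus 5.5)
Your proposal is correct and follows essentially the same route as the paper: you classify the rank-two flats as the $n$ lines $\C P_i$ (Möbius value $n-2$) and the $3\binom n4$ double lines, use $\chi(1)=0$ to fix the constant term, check the discriminant identity, invoke Theorem~\ref{thm:terao}, and identify $n=4$ with the braid arrangement. You are in fact more explicit than the paper about the genericity hypothesis, namely non-concurrence of three lines $\overline{P_iP_j}$ with pairwise disjoint index pairs, which matters only for $n\ge6$, and you also verify that it is a nonempty Zariski-open condition.
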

\begin{proof}
For generic points, the rank-two flats are the $n$ lines $\C P_i$, each in $n-1$ planes, and the $\frac12\binom n2\binom{n-2}2=3\binom n4$ lines $\operatorname{span}(P_i,P_j)\cap\operatorname{span}(P_k,P_l)$ with $\{i,j\}\cap\{k,l\}=\emptyset$, each in exactly two planes; compare Proposition~\ref{prop:tenplanes}, where for $n=5$ general position suffices. Therefore $\sum_{\rk X=2}\mu(X)=n(n-2)+3\binom n4$, and $\chi(1)=0$ gives the constant term. The discriminant is a direct computation. For $n=5,\dots,8$ the lattice was also computed for random integral configurations; the quadratic factors are $t^2-9t+21$, $t^2-14t+55$, $t^2-20t+120$ and $t^2-27t+231$.
\end{proof}

\subsection{Computations}
Table~\ref{tab:sizes} summarizes what we know. The entries ``not free'' for $3\times6$, $4\times6$ and $5\times7$ come from computing $\Der(-\log f'_{m,n})$ in the chart of Proposition~\ref{prop:chart} at $Z=0$, i.e.\ at the point $(e_1,\dots,e_{m-1},0,\dots,0)$: the number of minimal generators exceeds the number of variables, so the chart divisor is not free at the origin. By Proposition~\ref{prop:chart} and Lemma~\ref{lem:standard}(d), $D_{m,n}$ is then not free.

\begin{table}[ht]
\centering
\renewcommand{\arraystretch}{1.15}
\begin{tabular}{@{}llll@{}}
\toprule
size & freeness & degrees of minimal generators & source\\
\midrule
$1\times n$ & yes & $(1^n)$ & normal crossings\\
$m\times(m+1)$ & yes & $(1^{m(m+1)})$ & \cite{BM}\\
$2\times n$ & yes & $(1^{n+3},(n+1)^{n-3})$ & \cite{Yim}\\
$3\times5$ & no & $(1^{13},10^5)$; syzygies $R(-11)^3$ & Theorem~\ref{thm:35}\\
$4\times6$ & no & chart ($12$ vars): $(1^9,8^6)$ & Appendix~\ref{app:code}, Part 7\\
$5\times7$ & no & chart ($15$ vars): $(1^{11},10^{10})$ & Appendix~\ref{app:code}, Part 7\\
$3\times6$ & no & chart ($12$ vars): $(1^8,11^8,12^{12},13^{19},17)$ & Appendix~\ref{app:code}, Part 7\\
\bottomrule
\end{tabular}
\medskip
\caption{Freeness of $D_{m,n}$. For $4\times6$, $5\times7$ and $3\times6$ the table lists the minimal generators of $\Der(-\log f'_{m,n})$ in the chart at a rank-$(m-1)$ point.}\label{tab:sizes}
\end{table}

Note the pattern for the $(n-2)\times n$ charts, $n=5,6,7$: there are $2n-3$ linear generators and $\binom{n-2}2$ generators of degree $2(n-2)$. For $4\times6$ we also evaluated a presentation matrix of $\Der(-\log f'_{4,6})$ at randomly chosen rational points: at points with $z=0$ (rank $3$) the stalk needs $15$ generators, while at points with $z\ne0$ (rank $4$) it is free, as it must be by Corollary~\ref{cor:fullrankfree}. 

Finishing this paper, before presenting computational scripts, we would like to formulate an updated version of Yim's question about determinantal arrangements.
\begin{conjecture}\label{conj}
For $1\le m<n$, the divisor $D_{m,n}$ is free if and only if $m\le2$ or $n=m+1$. Moreover, for $3\le m\le n-2$ the non-free locus contains all matrices of rank at most $m-1$; for $m=n-2$ it is exactly $\{\rk M\le n-3\}$.
\end{conjecture}

For $m=n-2$, freeness at matrices of full rank is Corollary~\ref{cor:fullrankfree}. By Gale duality, the case $m>n/2$ at full-rank points reduces to $n-m<n/2$.

\appendix
\section{Computations}\label{app:code}

All Macaulay2 computations reported in this paper were performed with
Macaulay2 \cite{M2}, version 1.22, over $\Q$. All calculations are exact.
Since the defining polynomials have rational coefficients, 
Lemma~\ref{lem:standard}(f) shows that the resulting statements about
logarithmic derivation modules remain valid after extension of scalars to
$\C$.

The complete source file is
\texttt{maxminors\_computations.m2}. It is supplied as ancillary material
and, for reproducibility and version control, we propose to maintain the
same file in a public GitHub repository
\url{https://github.com/Idfewcount/maximal-minors-arrangements}

%

The script is self-contained and can be executed from the command line by
\begin{center}\texttt{M2 script --maxminors\_computations.m2}.
\end{center}
No additional Macaulay2 packages are required.

\subsection*{Computation of logarithmic derivations}

We briefly describe the construction implemented in the script. Let
\[
R=\Q[y_1,\ldots,y_N],\qquad
f=\prod_{i=1}^{s}f_i,
\]
where the $f_i$ are the pairwise non-associate irreducible homogeneous
factors of $f$. Put
\[
J=\left(\frac{\partial f_i}{\partial y_j}\right)_
   {\substack{1\le i\le s\\1\le j\le N}},
\qquad
H=-\operatorname{diag}(f_1,\ldots,f_s).
\]
The kernel of the matrix
\[
(J\mid H)
\]
consists of pairs $(g,h)\in R^N\oplus R^s$ satisfying
\[
\sum_{j=1}^{N}g_j\frac{\partial f_i}{\partial y_j}
   =h_i f_i
   \qquad (1\le i\le s).
\]
Projecting this kernel onto the first $N$ coordinates therefore gives
\[
\Der(-\log f)
 =
\left\{
\sum_{j=1}^{N}g_j\frac{\partial}{\partial y_j}
\ \middle|\
\sum_{j=1}^{N}g_j\frac{\partial f_i}{\partial y_j}\in(f_i)
\text{ for every }i
\right\}.
\]
This is implemented by the function \texttt{logDer}. The graded shifts are
chosen so that all maps are homogeneous. The auxiliary function
\texttt{logDerUngraded} implements the same construction without a grading
and is used only for the family considered in Remark~\ref{rem:family}.

\subsection*{Organization of the script}

The file is divided into the following parts.

\begin{description}[leftmargin=6em,labelwidth=5.5em,style=sameline]

\item[Part 1]
Sanity checks in known free cases. The script computes
$\Der(-\log D_{2,n})$ for $3\le n\le7$ and
$\Der(-\log D_{3,4})$. The resulting degrees agree with the known free
cases recalled in the introduction and in Appendix~\ref{app:2n}.

\item[Part 2]
The nine-variable computation used in
Proposition~\ref{prop:certificate}. For
\[
N_9=
\begin{pmatrix}
1&0&a_3&a_4&a_5\\
0&1&b_3&b_4&b_5\\
0&0&c_3&c_4&c_5
\end{pmatrix},
\]
the script forms the ten nonzero $3\times3$ minors and computes the
logarithmic derivation module of their product. It also computes the
relevant Fitting ideal and its radical.

\item[Part 3]
The computation of $\Der(-\log D_{3,5})$ used in
Theorem~\ref{thm:35}(1)--(3). It determines the minimal generators and
the minimal graded free resolution, constructs the fields
$\psi_{i\to k}$ and $\widetilde\psi_{i\to k}$, verifies their
logarithmicity, checks the congruences modulo $R\cL$, and verifies that
$\cL$ together with any admissible choice of
$\Phi_1,\ldots,\Phi_5$ generates the complete logarithmic derivation
module. It also verifies the three relations \eqref{eq:syz}.

\item[Part 3b]
The multigraded computation for Theorem~\ref{thm:35}(6). The polynomial
ring is endowed with the $\Z^3\times\Z^5$ grading used in
Lemma~\ref{lem:psi}, and the multidegrees of the minimal generators and
syzygies are computed directly.

\item[Part 4]
The rank-one computation used in the proof of
Theorem~\ref{thm:35}(5). The minimal presentation matrix is restricted to
the generic rank-one matrix $M=uw^T$, and its rank is computed over the
fraction field $\Q(u_1,u_2,u_3,w_1,\ldots,w_5)$.

\item[Part 5]
An independent computation for the ten-plane arrangement
$\cA(P_1,\ldots,P_5)$ with
\[
P_1=e_1,\quad P_2=e_2,\quad P_3=e_3,\quad
P_4=(2,3,5),\quad P_5=(7,-1,4).
\]
It computes its logarithmic derivation module and its projective
dimension. This part serves as an algebraic consistency check for the
non-freeness established combinatorially in
Proposition~\ref{prop:tenplanes}.

\item[Part 6]
The computation mentioned in Remark~\ref{rem:family}. For the family
\[
c_1c_2c_3(c_1+c_2+c_3)(c_1+sc_2+uc_3)
\]
the script computes the logarithmic derivation module and the radical of
the Fitting ideal defining its non-free locus.

\item[Part 7]
The chart computations at rank-$(m-1)$ points for the sizes
$3\times5$, $4\times6$, $5\times7$, and $3\times6$. These calculations
produce the generator degrees recorded in Table~\ref{tab:sizes}.

\item[Part 8]
The local computation for the $4\times6$ chart discussed after
Table~\ref{tab:sizes}. The presentation matrix is evaluated at rational
points with $z=0$ and at a point with $z\ne0$. The random seed is fixed by
\texttt{setRandomSeed 2026}, so the calculation is reproducible. These
tests use exact rational arithmetic and provide auxiliary evidence only.

\end{description}

\subsection*{Computational certificates for the \texorpdfstring{$3\times5$}{3x5} case}

For the nine-variable chart of Proposition~\ref{prop:certificate}, the
script returns ten minimal generators, seven in degree $1$ and three in
degree $6$, together with one syzygy in degree $7$. Thus
\[
0\longrightarrow S(-7)
\longrightarrow S(-1)^7\oplus S(-6)^3
\longrightarrow\Der(-\log f'_{3,5})
\longrightarrow0.
\]
If $P_9$ denotes a minimal presentation matrix, the script also returns
\[
\sqrt{\operatorname{Fitt}_9\big(\Der(-\log f'_{3,5})\big)}
 =
\sqrt{I_1(P_9)}
 =
(c_3,c_4,c_5).
\]
This is the computational input in
Proposition~\ref{prop:certificate}. In particular, the non-free locus in
this chart is exactly the linear subspace $\{c=0\}$.

For the full $3\times5$ divisor, the computation gives $18$ minimal
generators, with degrees
\[
1^{13},\qquad 10^5,
\]
and three syzygies of degree $11$. Hence it produces the minimal
resolution
\[
0\longrightarrow R(-11)^3
\longrightarrow R(-1)^{13}\oplus R(-10)^5
\longrightarrow\Der(-\log D_{3,5})
\longrightarrow0.
\]
The script independently checks that every $\psi_{i\to k}$ is
logarithmic, that no $\psi_{i\to k}$ lies in $R\cL$, and that for fixed
$k$ the four fields $\widetilde\psi_{i\to k}$ are congruent modulo
$R\cL$. For several different choices of the representatives
$\Phi_k=\widetilde\psi_{i(k)\to k}$ it verifies
\[
R\cL+\sum_{k=1}^{5}R\Phi_k
   =\Der(-\log D_{3,5})
\]
and
\[
\sum_{k=1}^{5}x_{rk}\Phi_k\in R\cL,
\qquad r=1,2,3.
\]
The kernel of the resulting $18$-generator presentation is minimally
generated by three elements of degree $11$.

The separate multigraded calculation gives
\[
\deg\Phi_k
 =
\big((3,3,3),(2,2,2,2,2)-e_k\big)
\]
and
\[
\deg\!\left(\sum_{k=1}^{5}x_{rk}\Phi_k\right)
 =
\big((3,3,3)+e_r,(2,2,2,2,2)\big),
\qquad r=1,2,3,
\]
as stated in Theorem~\ref{thm:35}(6).

Finally, after substituting the generic rank-one matrix
\[
M=uw^T
\]
into a minimal presentation matrix of
$\Der(-\log D_{3,5})$, the script computes its rank over
$\Q(u,w)$ to be exactly $1$. This is the only additional computer
calculation needed in the rank-one case of
Theorem~\ref{thm:35}(5). The remaining steps in the proof of that item,
including the identification of the non-free locus, are the geometric
and module-theoretic arguments given in Section~\ref{sec:35}.
\section{A short proof that \texorpdfstring{$D_{2,n}$}{D2,n} is free}\label{app:2n}

Corollary~\ref{cor:fullrankfree}(b) uses Yim's theorem that $D_{2,n}$ is free. For completeness we give a short proof, which replaces the determinant computations of \cite[\S3]{Yim} by Saito's criterion in the form of Lemma~\ref{lem:standard}(b). Let $m=2$, $n\ge4$, and write $[a,b]=\det(a|b)$ for $a,b\in\C^2$.

\emph{The fields.} The $n+3$ linear fields: a basis of $\cL_{2,n}$ (Lemma~\ref{lem:linear}). For $u\in\C^2$ put
\[\varphi_u=\sum_{k=4}^n[v_2,v_k][v_3,v_k]\prod_{\substack{s\ge4\\ s\ne k}}[u,v_s]\;(v_1\cdot\dd{v_k}).\]
This is a binary form of degree $n-4$ in $u$. Its coefficients $\varphi_0,\dots,\varphi_{n-4}$ (for $u=(1,-T)$, $\varphi_j$ is the coefficient of $T^j$) are the fields of degree $n+1$ of \cite{Yim}.

\emph{They are logarithmic.} Write $\varphi_u=\sum_kc_k(v_1\cdot\dd{v_k})$. Then $\varphi_u(\Delta_{ij})=0$ for $i,j\le3$, and $\varphi_u(\Delta_{1k})=c_k[v_1,v_1]=0$. For $i\in\{2,3\}$, $\varphi_u(\Delta_{ik})=c_k[v_i,v_1]$ is divisible by $[v_i,v_k]$. For $4\le i<j$,
\begin{gather*}
\varphi_u(\Delta_{ij})=Q\cdot\big(H(v_i,v_j)-H(v_j,v_i)\big),\qquad\text{where}\\
H(a,b)=[v_2,a][v_3,a][v_1,b][u,b],\qquad Q=\prod_{s\ge4,\ s\ne i,j}[u,v_s].
\end{gather*}
Since $H$ is bihomogeneous of bidegree $(2,2)$, the bracket vanishes when $v_j=\lambda v_i$ or $v_i=0$, i.e.\ on $V(\Delta_{ij})$, so it is divisible by $\Delta_{ij}$.

\emph{Degrees.} $(n+3)\cdot1+(n-3)(n+1)=n(n-1)=\deg f_{2,n}$.

\emph{Independence at one point.} By Lemma~\ref{lem:standard}(b) it remains to find a point where the $2n$ fields are linearly independent.
\begin{itemize}[leftmargin=2em]
\item Let $\rho=(\rho_4,\dots,\rho_n)\colon U_{2,n}\to\C^{n-3}$, $\rho_k=[v_1,v_k][v_2,v_3]/([v_2,v_k][v_1,v_3])$, be the cross-ratio map. It is constant on $G$-orbits, so the linear fields lie in $\ker d\rho$.
\item At $p=(e_1,e_2,e_1+e_2,e_1+s_4e_2,\dots,e_1+s_ne_2)$ with distinct $s_k\notin\{0,1\}$ one computes
\[d\rho_k(\varphi_j)=s_k(s_k-1)\,\sigma_{n-4-j}(s_4,\dots,\widehat{s_k},\dots,s_n),\]
where $\sigma_r$ is the $r$-th elementary symmetric polynomial. The matrix $\big(\sigma_r(\widehat{s_k})\big)_{k,r}$ is invertible, because the polynomials $\prod_{l\ne k}(z-s_l)=\sum_r(-1)^r\sigma_r(\widehat{s_k})z^{n-4-r}$ are linearly independent (evaluate at $z=s_i$).
\item So $d\rho_p$ is surjective. Its kernel has dimension $n+3$, it contains the $(n+3)$-dimensional tangent space of $G\cdot p$, see Lemma~\ref{lem:linear}, and the linear fields span that tangent space.
\end{itemize}
Hence all $2n$ fields are independent at $p$, and $D_{2,n}$ is free with exponents $(1^{n+3},(n+1)^{n-3})$. \qed
\section*{Acknowledgements}
The author would like to thank Professor Piotr Pokora for stimulating discussions and valuable comments on the manuscript.



\begin{thebibliography}{GGMS87}

\bibitem[Arn69]{Arnold}
V.~I. Arnold, \emph{The cohomology ring of the colored braid group}, Mat. Zametki \textbf{5} (1969), 227--231.

\bibitem[BM06]{BM}
R.-O. Buchweitz and D. Mond, \emph{Linear free divisors and quiver representations}, in: Singularities and Computer Algebra, London Math. Soc. Lecture Note Ser. \textbf{324}, Cambridge Univ. Press, 2006, 41--77.

\bibitem[Dim92]{Dimca}
A. Dimca, \emph{Singularities and Topology of Hypersurfaces}, Universitext, Springer-Verlag, New York, 1992.

\bibitem[DO88]{DO}
I. Dolgachev and D. Ortland, \emph{Point sets in projective spaces and theta functions}, Ast\'erisque \textbf{165} (1988).

\bibitem[FN62]{FN}
E. Fadell and L. Neuwirth, \emph{Configuration spaces}, Math. Scand. \textbf{10} (1962), 111--118.

\bibitem[GGMS87]{GGMS}
I.~M. Gelfand, R.~M. Goresky, R.~D. MacPherson and V.~V. Serganova, \emph{Combinatorial geometries, convex polyhedra, and Schubert cells}, Adv. Math. \textbf{63} (1987), 301--316.

\bibitem[GMNS09]{GMNS}
M. Granger, D. Mond, A. Nieto-Reyes and M. Schulze, \emph{Linear free divisors and the global logarithmic comparison theorem}, Ann. Inst. Fourier \textbf{59} (2009), 811--850.

\bibitem[JW23]{JanaszWisniewska}
M.~Janasz and P.~Wi\'sniewska, \emph{A note on free determinantal hypersurface arrangements in $\PP^{14}_{\C}$}, Proyecciones Journal of Mathematics \textbf{42} (2023), no.~4, 1093--1104, \url{https://doi.org/10.22199/issn.0717-6279-5780}.

\bibitem[M2]{M2}
D.~R. Grayson and M.~E. Stillman, \emph{Macaulay2, a software system for research in algebraic geometry}, available at \url{https://macaulay2.com}.

\bibitem[OT92]{OT}
P. Orlik and H. Terao, \emph{Arrangements of Hyperplanes}, Grundlehren Math. Wiss. \textbf{300}, Springer, 1992.

\bibitem[Ran89]{Randell}
R. Randell, \emph{Lattice-isotopic arrangements are topologically isomorphic}, Proc. Amer. Math. Soc. \textbf{107} (1989), 555--559.

\bibitem[Sai80]{Saito}
K. Saito, \emph{Theory of logarithmic differential forms and logarithmic vector fields}, J. Fac. Sci. Univ. Tokyo Sect. IA Math. \textbf{27} (1980), 265--291.

\bibitem[Sta72]{Stanley}
R.~P. Stanley, \emph{Supersolvable lattices}, Algebra Universalis \textbf{2} (1972), 197--217.

\bibitem[Ter81]{Terao81}
H. Terao, \emph{Generalized exponents of a free arrangement of hyperplanes and Shepherd--Todd--Brieskorn formula}, Invent. Math. \textbf{63} (1981), 159--179.

\bibitem[Ter86]{Terao86}
H. Terao, \emph{Modular elements of lattices and topological fibration}, Adv. Math. \textbf{62} (1986), 135--154.

\bibitem[Yim15]{Yim}
A. Yim, \emph{Homological properties of determinantal arrangements}, arXiv:1506.05145v2.

\end{thebibliography}
\end{document}